\newtheorem{Thm}{Theorem}
\newtheorem{Conj}[Thm]{Conjecture}
\newtheorem{Prop}[Thm]{Proposition}
\newtheorem{Def/Thm}[Thm]{Definition/Theorem}
\newtheorem{Lemma}[Thm]{Lemma}
\theoremstyle{remark}
\newtheorem{Rmk}[Thm]{Remark}
\newcommand{\ti }{\times}
\newcommand{\lann}{\langle\langle}
\newcommand{\rann}{\rangle\rangle}
\newcommand{\PP }{{\mathbb P}}
\newcommand{\QQ }{{\mathbb Q}}
\newcommand{\CC }{{\mathbb C}}
\newcommand{\ZZ }{{\mathbb Z}}
\newcommand{\vir}{\mathrm{vir}}
\newcommand{\T}{{\mathsf{T}}}
\newcommand{\lan}{\langle}
\newcommand{\ran}{\rangle}
\newcommand{\com}{{\mathbb{C}}}
\begin{document}

\title[Gromov-Witten invariants of Calabi-Yau fibrations]
{Gromov-Witten invariants of Calabi-Yau fibrations}

\author{Hyenho Lho}
\address{Department of Mathematics, ETH Z\"urich}
\email {hyenho.lho@math.ethz.ch}
\date{April 2019}.

\begin{abstract} 
We study the quasimap invariants of elliptic and K3 fibrations. Oberdieck and Pixton conjectured that the Gromov-Witten potentials of elliptic fibrations are quasi-modular forms. Analogously, we propose similar conjecture for the quasimap potentials of elliptic fibrations. We also conjecture some finite generation properties of quasimap potentials of K3 fibrations. Via wall-crossing conjecture, this will imply some quasi-modularity of the Gromov-Witten potentials of K3 fibrations. We provide some evidences for our conjectures through several examples. The method here can be further generalized to arbitrary n-dimensional Calabi-Yau fibrations.
\end{abstract}

\maketitle

\setcounter{tocdepth}{1} 
\tableofcontents

\setcounter{section}{-1}

\section{Introduction}
\subsection{Elliptic fibrations}

Consider non-singular algebraic varieties $X$, $B$ and elliptic fibration 
$$\pi : X \rightarrow B\,,$$
i.e. a flat morphism with fibers connected curves of arithmetic genus 1. Assume that $\pi$ has integral fibers and has a section

$$\iota:B\rightarrow X\,.$$
Let $N_{\iota}$ be the normal bundle of $\iota$. Fix a curve class $\gamma \in H_2(B,\ZZ)$ and let

$$\mathcal{F}^{\mathsf{GW}}_{g,\gamma}(Q)=\sum_{ \pi_*\beta=\gamma}Q^\beta\int_{[\overline{M}_{g}(X,\beta)]^{\text{vir}}}1\, \in \CC[[Q]]\,,$$
be the Gromov-Witten series associated to $\gamma$.

The ring of quasimodular forms is the free polynomial algebra
$$\mathsf{QMod}=\QQ[E_2, E_4, E_6]\,,$$
where $E_k$ are the weight $k$ Eisenstein series
$$E_k(Q)=1-\frac{2k}{B_k}\sum_{n=1}^{\infty}\frac{n^{k-1} Q^n}{1-Q^n}\,,$$
and $B_k$ are the Bernoulli numbers.
Based on the results for Gromov-Witten invariants of elliptic curves, Oberdieck and Pixton made the following conjecture (\cite{ObPix}).

\begin{Conj}
 For $\gamma \in H_2(B,\ZZ)$, $\mathcal{F}^{\mathsf{GW}}_{g,\gamma}$ is a quasimodular form:
 $$\mathcal{F}^{\mathsf{GW}}_{g,\gamma} \in \frac{1}{\Delta(Q)^m}\mathsf{QMod}\,,$$
where $\Delta=Q\prod_{n=1}^\infty(1-Q^n)^{24}$ is the modular discriminant and $m=-\frac{1}{2}c_1(N_\iota)\cdot \gamma$.
\end{Conj}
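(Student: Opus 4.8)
The plan is to separate the fiber and base geometry and to match the vertical contributions with the Gromov--Witten theory of an elliptic curve, which is already known to be quasimodular. Let $f\in H_2(X,\ZZ)$ be the fiber class and set $\hat\gamma=\iota_*\gamma$, a distinguished lift of $\gamma$ through the section. Since $\pi$ has integral fibers and a section, the kernel of $\pi_*\colon H_2(X,\ZZ)\to H_2(B,\ZZ)$ is spanned by $f$, so every effective $\beta$ with $\pi_*\beta=\gamma$ has the form $\beta=\hat\gamma+d\,f$ with $d\in\ZZ_{\ge 0}$. After fixing $\gamma$, the series $\mathcal{F}^{\mathsf{GW}}_{g,\gamma}$ is therefore a single power series in the fiber variable $q=Q^{f}$, up to the fixed monomial $Q^{\hat\gamma}$. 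Writing $q=e^{2\pi i\tau}$, the assertion becomes that this $q$-series lies in $\tfrac{1}{\Delta(q)^{m}}\QQ[E_2,E_4,E_6]$.

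First I would isolate the fiber direction by a degeneration of the base $B$, reducing by the degeneration formula to relative Gromov--Witten invariants in which the fiber degree $d$ is distributed among the pieces. The purely vertical contributions, coming from stable maps that project to points of $B$, are governed by the Gromov--Witten theory of the elliptic fiber; by Okounkov--Pandharipande this theory is quasimodular, and its $\tau$-dependence supplies all of the modular content. The section and the normal bundle $N_\iota$ enter only to twist these vertical contributions, and I expect both the modular weight and the pole order $m=-\tfrac12 c_1(N_\iota)\cdot\gamma$ to emerge from this twist together with the discriminant of the elliptic fibration: the vanishing of $\Delta$ along the nodal fibers is exactly what should produce the prescribed factor $\Delta^{-m}$.

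An alternative and more self-contained route uses the quasimap framework of the present paper. One realizes the fibration through a GIT presentation, computes the quasimap $I$-function explicitly, and checks directly that the genus-zero quasimap series is assembled from theta constants and Eisenstein series, hence is quasimodular; the higher-genus series is then controlled by a holomorphic anomaly equation whose boundary data are these genus-zero inputs. Applying quasimap wall-crossing transports quasimodularity from the quasimap side to the Gromov--Witten side, provided one checks that the wall-crossing transformation --- the mirror map determined by the $I$-function --- preserves the ring $\mathsf{QMod}$ and introduces only the expected power of $\Delta$.

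The hard part will be the uniform bookkeeping of weight and pole order, and the treatment of the singular fibers over the discriminant locus, where the naive fiberwise picture degenerates and relative or logarithmic techniques become necessary. Equally delicate is pinning down the non-holomorphic completion: the appearance of $E_2$ signals a genuine modular anomaly, so it is the holomorphic anomaly equation (or, equivalently, the precise wall-crossing correction) that forces the series into $\mathsf{QMod}$ rather than into the smaller ring of honest modular forms. I expect the higher-genus case to be the principal obstacle, since there the degeneration analysis and the wall-crossing must both be carried out with full control of the virtual classes.
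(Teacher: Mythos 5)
The statement you are addressing is Conjecture~1 of the paper. It is an open conjecture due to Oberdieck and Pixton, quoted from \cite{ObPix}; the paper does not prove it and does not claim to. Its own contribution is to formulate quasimap analogues (Conjectures~2 and~3), to record via Lemma~4 (from \cite{ASYZ}) how the mirror map relates the proposed quasimap ring $\mathsf{QEF}$ to $\QQ[E_2,E_4,E_6]$, and to prove finite-generation statements only for specific twisted local models (Theorems~\ref{MT1}--\ref{MT4}). So there is no proof in the paper to compare yours against, and what you have written is a research programme, not a proof.

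As a programme it correctly identifies the two standard attack lines, but every load-bearing step is left open, and in the current state of the literature each one is itself a hard open problem. For the degeneration route: reducing to relative invariants and invoking Okounkov--Pandharipande quasimodularity for the fiber does not formally control the full fibration --- the vertical local model is not the elliptic curve alone but its thickening inside $X$, the distribution of fiber degree over the degenerate pieces has to be resummed, and the claim that the twist by $N_\iota$ ``produces the prescribed factor $\Delta^{-m}$'' is a heuristic with no mechanism supplied; Oberdieck--Pixton themselves establish the conjecture only in special cases by quite different means. For the quasimap route: you propose to prove quasimodularity on the quasimap side and transport it by wall-crossing, but the quasimap-side statement is precisely the paper's Conjecture~2 (also open, with Theorems~\ref{MT2} and~\ref{MT3} providing only rational-function versions for two twisted examples), and the higher-genus wall-crossing you need is itself conjectural beyond the settings treated in \cite{CKg,CJR}. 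In other words, your second route reproduces the logical dependence the paper already describes (Conjecture~2 plus wall-crossing plus Lemma~4 would yield Conjecture~1 up to the $\Delta^{-m}$ bookkeeping), with both inputs unproven. One smaller point: your reduction of the sum over $\beta$ with $\pi_*\beta=\gamma$ to $\beta=\iota_*\gamma+df$ uses that every effective vertical class is a multiple of $f$ (true for integral fibers) but also that $\iota_*\gamma$ is the minimal effective lift, which requires justification. None of this makes your outline wrong as a strategy; it is simply not a proof, and the statement remains a conjecture.
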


Motivated by the above conjecture, we study the quasimap theory of $X$. The quasimap invariants are introduced in \cite{CK, CKM} for the study of mirror symmetry. The relationship of quasimap invariants to Gromov-Witten invariants, which is called wall-crossing conjecture, were studied in \cite{CKg0, CKw, CKg, CJR}. Based on the wall-crossing conjecture, a fundamental relationship between the quasimap invariants and the B-model for all local and complete intersection Calabi-Yau 3-folds $X$ is natural to propose : {\em the quasimap invariants of $X$ exactly equal the B-model invariants of the mirror Y.} Therefore, via quasimap theory, we study the string theoretic B-model theory directly.  See introduction of \cite{LP}, for more details.

For a curve class $\gamma \in H_2(B,\ZZ)$, let

\begin{align*}
    \mathcal{F}^{\mathsf{SQ}}_{g,\gamma}(q)=\sum_{ \pi_*\beta=\gamma}q^\beta\int_{[\overline{Q}_{g}(X,\beta)]^{\text{vir}}}1\, \in \CC[[q]]\,,
\end{align*}
be the quasimap series associated to $\gamma$.
The following series in $q$ will play basic role.
\begin{align*}
    L(q)&=(1-27q)^{-\frac{1}{3}}=1+9q+162q^2+\dots\,,\\
    I_1^{E}(q)&=3\sum_{n=1}^{\infty}\frac{(3d-1)!}{(d!)^3}q^d\,,\\
    B_1'(q)&=q \frac{\partial}{\partial q}I^{E}_1\,,\\
    X(q)&=\frac{q \frac{\partial}{\partial q} B_1'}{1+B_1'}\,.
\end{align*}
We define the ring of quasimap elliptic fibrations 
\begin{align*}
    \mathsf{QEF}:=\CC[L^{\pm 3},B_1',X]
\end{align*}
Define differential operator

$$\mathsf{D}=q \frac{d}{d q}\,.$$
The following equations were obtained in \cite{LP}.
\begin{align*}
    &\mathsf{D}L=\frac{L}{3}(L^3-1)\,,\\
    &X^2-(L^3-1)X+\mathsf{D}X-\frac{2}{9}(L^3-1)=0\,.
\end{align*}
Via the above relations, the ring $\mathsf{QEF}$ is closed under the action of $\mathsf{D}$.
Motivated by Conjecture 1, we conjecture the following.

\begin{Conj}
 For $\gamma \in H_2(B,\ZZ)$, we have
 $$\mathcal{F}^{\mathsf{SQ}}_{g,\gamma} \in \mathsf{QEF}\,.$$
\end{Conj}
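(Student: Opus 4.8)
The plan is to compute $\mathcal{F}^{\mathsf{SQ}}_{g,\gamma}$ by torus localization on the quasimap moduli space, to organize the answer as a sum over decorated graphs, and then to show that every graph contribution already lies in $\mathsf{QEF}$. Concretely, I would present the fiber direction of $\pi$ through the GIT data defining the elliptic fiber, hold the base class $\gamma$ fixed, and let the fiber degree vary; the sum over fiber degrees is exactly what produces the variable $q$. Because $\gamma$ is held fixed, the base geometry contributes only a finite, $\gamma$-dependent collection of vertex insertions, while all of the $q$-dependence is carried by the fiber quasimap theory. The first task is therefore to set up the $\epsilon$-stable quasimap localization in this fibered setting and to identify the fixed loci with stable graphs whose vertices carry genus and whose edges and legs record fiber degrees.

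The second step is to assemble the genus-$0$ building blocks. By \cite{LP}, the genus-$0$ fiber data — the $I$-function, its derivatives $I_1^E$, $B_1'$, and the associated quantity $X$ — are precisely the generators of $\mathsf{QEF}=\CC[L^{\pm 3},B_1',X]$, and they satisfy the differential relations $\mathsf{D}L=\frac{L}{3}(L^3-1)$ and $X^2-(L^3-1)X+\mathsf{D}X-\frac{2}{9}(L^3-1)=0$ recorded above. The key structural consequence, already noted in the excerpt, is that $\mathsf{QEF}$ is a \emph{differential} subalgebra of $\CC[[q]]$: it is finitely generated over $\CC$ and closed under $\mathsf{D}$. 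This closure is what will let an inductive reconstruction of higher genus stay inside $\mathsf{QEF}$.

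The third step is the genus induction. I would reconstruct the higher-genus potentials from the genus-$0$ data by Givental--Teleman $R$-matrix quantization (equivalently, by a holomorphic anomaly recursion as in \cite{LP}). In the graph sum each vertex contributes a descendent/Hodge integral over $\overline{M}_{g',n'}$ — a rational number — dressed by the $R$-matrix, each edge contributes a propagator built from the $S$-matrix, and each leg an $I$-function component. The crucial point is that the $R$-matrix and the propagators are determined by the quantum differential equation of the fiber, hence satisfy first-order ODEs in $\mathsf{D}$ whose coefficients are the genus-$0$ series, i.e.\ lie in $\mathsf{QEF}$. Combining this with closure of $\mathsf{QEF}$ under $\mathsf{D}$, one shows by induction on the order of the $q$-expansion that the entries of the $R$-matrix lie in a completion of $\mathsf{QEF}$; since each graph sum involves only finitely many multiplications, additions, and applications of $\mathsf{D}$ to elements of $\mathsf{QEF}$, every contribution — and hence $\mathcal{F}^{\mathsf{SQ}}_{g,\gamma}$ — remains in $\mathsf{QEF}$. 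The role played by $\Delta^{-m}$ in Conjecture 1 is here played by the generator $L^{\pm 3}=(1-27q)^{\mp 1}$, which supplies exactly the inverse powers that appear.

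The main obstacle I expect is that the fiber is Calabi--Yau, so its quantum cohomology is \emph{not} semisimple at the large-volume point and the Givental--Teleman classification does not apply directly. The standard remedy is to deform the theory (turning on a parameter that makes the quantum product semisimple), run the reconstruction there, and then take the non-semisimple limit; the delicate part is to prove that this limit does not leave $\mathsf{QEF}$, i.e.\ that no transcendental terms or spurious poles survive. A secondary difficulty is controlling the $\gamma$-dependence uniformly: one must check that varying the fixed base class only rescales the finite vertex data and never changes the ring in which the $q$-series lives. I would regard establishing the $R$-matrix ODE over $\mathsf{QEF}$, together with control of the semisimple limit, as the technical heart of the argument.
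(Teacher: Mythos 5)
The statement you are addressing is Conjecture~2 of the paper: the author does not prove it, and the strongest evidence offered is Theorems~\ref{MT2} and~\ref{MT3}, which establish only that for two specific \emph{twisted} theories on $\PP^2\times\PP^1$ and $\PP^2\times\PP^2$ the series lie in $\CC[[q_2]](L^3,B_1',X)$ --- a field of rational functions in the generators, with power-series coefficients in the base variable --- while membership in the polynomial ring is itself left as a further conjecture. So you are attempting to supply an argument the paper does not contain, and as written the proposal has gaps at exactly the points where the paper stops.

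Two steps would fail. First, your whole scheme rests on torus localization of $\overline{Q}_g(X,\beta)$ and a decorated-graph sum, but Conjecture~2 concerns an arbitrary elliptic fibration $\pi:X\to B$ with section and integral fibers: neither $X$ nor $B$ is assumed toric or to carry a torus action, so there are no fixed loci and no analogue of Proposition~\ref{DeCo}. Everything the paper can actually compute lives in the equivariant twisted theories of $\PP^{n_1}\times\PP^{n_2}$, precisely because the torus action and the equivariant parameters $\lambda_{i,k}$ are what produce the asymptotic expansions of $\overline{\mathds{I}}$ at the fixed points and the edge/vertex contributions; the paper's own remark on Quantum Lefschetz emphasizes that even passing from these twisted models to genuine complete intersections beyond genus one requires NMSP or log GLSM techniques that are not in place here. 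Second, you defer the non-semisimplicity of the fiber's quantum cohomology to a ``standard remedy'' of deforming and taking a limit, but that limit is the actual content of the conjecture, not a technicality: in the paper's framework semisimplicity is supplied by the equivariant parameters and the nonequivariant limit is never taken, and there is no established mechanism guaranteeing the limit stays inside $\mathsf{QEF}$ rather than producing new transcendental series. Finally, even granting localization and an $R$-matrix satisfying ODEs over $\mathsf{QEF}$, the graph sum manifestly introduces denominators (powers of $1+B_1'$, of $\mathds{R}_{0,ij}$, and of polynomials in $L$ appear throughout the explicit formulas in the paper and its appendix), so your argument would at best reprove the rational-function statement of Theorems~\ref{MT2} and~\ref{MT3}, not the polynomial statement $\mathcal{F}^{\mathsf{SQ}}_{g,\gamma}\in\mathsf{QEF}$ that you set out to establish.
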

The following Lemma relate the Conjecture 1 and 2.
\begin{Lemma}[\cite{ASYZ}]
  Via the change of variable by the mirror map of elliptic curve
  
  $$
      Q=q \,  \text{Exp}(I^{E}_1(q))\,,
  $$
we have
  \begin{align*}
      E_2&=\frac{(1+B_1')^2}{L^3}(12 X+4-3L^3)\,,\\
      E_4&=\frac{(1+B_1')^4}{L^6}(-8 L^3+9L^6)\,,\\
      E_6&=\frac{(1+B_1')^6}{L^9}(-8L^3+36 L^6-27 L^9)\,.
  \end{align*}
\end{Lemma}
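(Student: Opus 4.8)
The plan is to prove the three identities by the differential-ring method: I will show that both sides, viewed as formal power series, satisfy the same Ramanujan-type system and agree at $q=0$, and then conclude by uniqueness. First I would record the derivatives of the building blocks. Writing $u=1+B_1'$ and $\ell=L^3$, the relation $\mathsf{D}L=\frac{L}{3}(L^3-1)$ gives $\mathsf{D}\ell=\ell(\ell-1)$; the definition $X=\frac{\mathsf{D}B_1'}{1+B_1'}$ gives $\mathsf{D}u=Xu$; and solving the stated quadratic for $\mathsf{D}X$ gives $\mathsf{D}X=-X^2+(\ell-1)X+\frac{2}{9}(\ell-1)$. In particular every generator of $\mathsf{QEF}=\CC[L^{\pm3},B_1',X]$ has a polynomial derivative, so the right-hand sides of the three formulas lie in a ring closed under $\mathsf{D}$.

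Next I would transport the classical Ramanujan equations $\mathsf{D}_QE_2=\frac{1}{12}(E_2^2-E_4)$, $\mathsf{D}_QE_4=\frac{1}{3}(E_2E_4-E_6)$, $\mathsf{D}_QE_6=\frac{1}{2}(E_2E_6-E_4^2)$ through the mirror map. From $Q=q\,\text{Exp}(I_1^E)$ one has $\frac{\mathsf{D}Q}{Q}=1+\mathsf{D}I_1^E=1+B_1'=u$, so $\mathsf{D}_Q:=Q\frac{d}{dQ}=\frac{1}{u}\mathsf{D}$. Substituting the proposed expressions for $E_2,E_4,E_6$ and using the derivative formulas above, each Ramanujan equation becomes, after clearing the factor $\frac{1}{u}$, a polynomial identity in $\CC[L^{\pm3},B_1',X]$ which I would verify directly. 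Finally I would check initial conditions: at $q=0$ we have $B_1'=X=0$, $L=1$, and the three right-hand sides evaluate to $1=E_2(0)=E_4(0)=E_6(0)$, while $Q(0)=0$. Since the Ramanujan system determines a solution uniquely as a formal power series in $Q$ from its value at $Q=0$, and the mirror map $Q=q+\cdots$ is an invertible change of variable, the two triples coincide.

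The hard part is verifying the polynomial identities: the algebra closes only because of the precise interrelation among $L$, $1+B_1'$ and $X$ imposed by the Picard--Fuchs equation of the mirror elliptic curve---concretely, the identification of $1+B_1'$ with the fundamental period and the resulting expression of the Weierstrass data in terms of the algebraic modulus $L^3$. Pinning this down is the substantive step. A structurally cleaner alternative is to bypass Ramanujan and argue through periods directly: realize $E_4$ and $E_6$ as the Weierstrass invariants $g_2,g_3$ of the mirror family, each multiplied by the appropriate power of the fundamental period, and $E_2$ through the logarithmic derivative of the period (the quasi-period). The normalizations can then be fixed by the two consistency checks $\frac{E_4^3-E_6^2}{1728}=\Delta$ and $j=E_4^3/\Delta$, both of which, under the proposed formulas, reduce to clean algebraic expressions in $L^3$ (with $\Delta$ additionally carrying the expected factor $q\,u^{12}$ from the mirror map), giving strong independent confirmation of the constants.
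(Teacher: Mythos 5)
The paper offers no proof of this Lemma---it is quoted from \cite{ASYZ}---so your proposal has to stand on its own. Your overall strategy (transport Ramanujan's system $\mathsf{D}_QE_2=\frac{1}{12}(E_2^2-E_4)$, etc., through the mirror map, reduce to polynomial identities in $\CC[L^{\pm3},B_1',X]$ via the closed differential ring, and match initial conditions) is the right one and does work. But there is a genuine gap at the one step you defer: with the Jacobian you propose, $\mathsf{D}_Q=\frac{1}{1+B_1'}\mathsf{D}$, the polynomial identities are \emph{false}. Writing $u=1+B_1'$, $\ell=L^3$, one computes $\mathsf{D}E_4=\frac{u^4}{\ell}\bigl(4X(9\ell-8)+8(\ell-1)\bigr)$ while $\frac{1}{3}(E_2E_4-E_6)=\frac{u^6}{\ell^2}\bigl(4X(9\ell-8)+8(\ell-1)\bigr)$, so your version of the second Ramanujan equation forces $u^3=\ell$, i.e.\ $(1+B_1')^3=L^3$; this already fails at order $q$ since $1+B_1'=1+6q+\cdots$ while $L=1+9q+\cdots$. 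The same mismatch sinks your fallback check: $\frac{E_4^3-E_6^2}{1728}=q\,u^{12}L^{-9}=q-9q^2+\cdots$, whereas $\Delta(Q)=Q-24Q^2+\cdots$ equals $q-18q^2+\cdots$ under $Q=q\exp(I_1^E)$. Because you never actually carry out the verification, the argument as written would collapse there.

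The missing idea is the correct logarithmic derivative of the mirror map. Note that $1+B_1'=\sum_{d\ge0}\frac{(3d)!}{(d!)^3}q^d$ is the holomorphic period $\omega_0$ of $\theta^2-3q(3\theta+1)(3\theta+2)$, and the mirror map compatible with the Lemma is the period ratio $Q=\exp(\omega_1/\omega_0)$, whose derivative is governed by the Wronskian: since $\theta W=\frac{27q}{1-27q}W$ one gets $W=(1-27q)^{-1}=L^3$ and hence $\mathsf{D}\log Q=\frac{W}{\omega_0^2}=\frac{L^3}{(1+B_1')^2}=1+15q+\cdots$, not $1+B_1'=1+6q+\cdots$. (The paper's displayed formula $Q=q\,\mathrm{Exp}(I_1^E)$ with its stated $I_1^E$ is not this map; the correct exponent is $3\sum_d\frac{(3d)!}{(d!)^3}(H_{3d}-H_d)q^d\big/\omega_0$.) With $\mathsf{D}_Q=\frac{(1+B_1')^2}{L^3}\mathsf{D}$ your scheme closes: the extra factor $\frac{u^2}{\ell}$ exactly reconciles the two sides above, the first Ramanujan equation reduces, using $\mathsf{D}X=-X^2+(\ell-1)X+\frac29(\ell-1)$, to the identity $12X^2+8X-6X\ell-\frac43(\ell-1)=\frac{1}{12}\bigl((12X+4-3\ell)^2-9\ell^2+8\ell\bigr)$, which holds, and the discriminant check becomes $q\,u^{12}L^{-9}=q-9q^2+\cdots=\Delta(q+15q^2+\cdots)$. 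After this correction your initial-condition and uniqueness argument is fine, but as submitted the proof rests on a Jacobian that makes its central identities wrong.
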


\

\subsection{K3 fibrations.} We can also consider Calabi-Yau 3-fold X , curve B and K3 fibration
$$\pi : X\rightarrow B\,,$$
where generic fibers of $\pi$ are smooth K3 surfaces. For a fixed curve class $\gamma \in H_2(B,\ZZ)$, Let

\begin{align*}
    \mathcal{F}^{\mathsf{GW}}_{g,\gamma}(Q)=\sum_{\pi_*\beta=\gamma}Q^\beta\int_{[\overline{M}_{g}(X,\beta)]^{\text{vir}}}1\,,\\
    \mathcal{F}^{\mathsf{SQ}}_{g,\gamma}(q)=\sum_{\pi_*\beta=\gamma}q^\beta\int_{[\overline{Q}_{g}(X,\beta)]^{\text{vir}}}1\,,
\end{align*}
be the Gromov-Witten and quasimap series associated to $\gamma$, respectively. We  define the following series in $q$.

\begin{align}\label{KG}
        \nonumber L(q)&=(1-4^4q)^{-\frac{1}{4}}=1+9q+162q^2+\dots\,,\\
        I^{K3}_1&=\frac{4\sum_{n=1}^{\infty}\frac{(4d)!}{(d!)^4}(\sum_{r=d+1}^{4d}\frac{1}{r})q^d}{\sum_{n=0}^{\infty}\frac{(4d)!}{(d!)^4}q^d}\,,\\
    \nonumber A_1'(q)&=q \frac{\partial}{\partial q}I^{K3}_1\,,\\
   \nonumber X(q)&=\frac{q \frac{\partial}{\partial q} A_1'}{1+A_1'}\,.
\end{align}
We define the ring of quasimap K3 fibrations

$$\mathsf{QKF}:=\CC[L^{\pm 4},A_1',X]\,.$$
Via the following relations which are obtained in \cite{Lho},
\begin{align*}
    &\mathsf{D}L=\frac{L}{4}(L^4-1)\,,\\
    &X^2-2 \mathsf{D}X+\frac{1}{16}(12L^8-11L^4-1)=0\,.
\end{align*}
the ring $\mathsf{QEF}$ is closed under the action of $\mathsf{D}$.

\begin{Conj}
 For $\gamma \in H_2(B,\ZZ)$, We have
 $$\mathcal{F}^{\mathsf{SQ}}_{g,\gamma}\in \mathsf{QKF}\,.$$
\end{Conj}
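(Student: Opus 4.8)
The plan is to adapt the strategy carried out for the elliptic case in \cite{LP, Lho} to a K3 fiber. First I would present $\pi:X\to B$ concretely as a hypersurface (or complete intersection) in a projective bundle over $B$ whose generic fibers are quartic surfaces; the choice $L=(1-4^4q)^{-1/4}$ already encodes the quartic geometry $\mathcal{O}(4)\subset\PP^3$ in the fiber direction. In this model $X$ arises as a GIT quotient carrying a fiberwise $\CC^*$-action, so that quasimap theory and torus localization on the quasimap graph space apply. I would then compute the genus-$0$ quasimap $I$-function, whose fiber components reproduce precisely $L$, $I^{K3}_1$, and hence $A_1'=\mathsf{D}I^{K3}_1$ and $X$. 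This identifies the three generators of $\mathsf{QKF}$ with genus-$0$ data and settles the base case $\mathcal{F}^{\mathsf{SQ}}_{0,\gamma}\in\mathsf{QKF}$.

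Next I would exploit the $\mathsf{D}$-closure of $\mathsf{QKF}$. The relations $\mathsf{D}L=\frac{L}{4}(L^4-1)$ and $X^2-2\mathsf{D}X+\frac{1}{16}(12L^8-11L^4-1)=0$ guarantee that every $q$-derivative generated in the localization sums lands back in $\mathsf{QKF}$, which is essential for propagating the statement across genera. For $g\ge 1$ I would use $\CC^*$-localization on the quasimap graph space (following \cite{CKg}) to write $\mathcal{F}^{\mathsf{SQ}}_{g,\gamma}$ as a sum over decorated stable graphs: vertices carry the genus-reducing action of an $R$-matrix on genus-$0$ invariants, and edges carry a single propagator. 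The argument would then be an induction on $g$, with the genus-$1$ case checked directly, in which each vertex and each edge factor preserves membership in $\mathsf{QKF}$.

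The crux reduces to a single finiteness statement: the entries of the $R$-matrix and the propagator must themselves lie in $\mathsf{QKF}$, so that no transcendental function beyond $L^{\pm 4}$, $A_1'$, $X$ is created at higher genus. I expect the main obstacle to be exactly here, because the quantum cohomology of a K3 fiber is \emph{not} semisimple, so the Givental--Teleman reconstruction does not apply directly. To get around this I would either pass to an equivariant or twisted theory that is semisimple and then control the non-equivariant specialization back to $X$, or else prove a holomorphic anomaly equation for $\mathcal{F}^{\mathsf{SQ}}_{g,\gamma}$ together with an a priori bound on the order of its pole in $L$; the latter forces only the finitely many generators to occur. Proving that the propagator and $R$-matrix have no new transcendental part — equivalently, that the higher-genus corrections stay inside the finitely generated ring $\mathsf{QKF}$ — is the decisive and hardest step.
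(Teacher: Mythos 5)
The statement you are trying to prove is Conjecture~3 of the paper, and the paper does \emph{not} prove it; it offers it as an open conjecture supported by partial evidence. What the paper actually establishes (Theorem~\ref{MT4}) is a statement for the $((4;2),(0;0))$-twisted quasimap theory of $\PP^3\times\PP^1$, proved by torus localization on the quasimap graph space, Picard--Fuchs analysis of the asymptotic expansion of $\overline{\mathds{I}}|_{H_1=\lambda_{1,i},H_2=\lambda_{2,j}}$, and explicit Birkhoff factorization of the $\mathds{S}$-operators --- essentially the strategy you outline. So your plan is the right general template, but it stops precisely where the difficulty lies, and the paper's own computations show that difficulty is real: even in the computable twisted model the edge/vertex contributions do \emph{not} close up on $L^{\pm4}, A_1', X$ alone. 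Two extra generators $E_1'$ and $B_2'$ appear (Proposition~\ref{L422}), the conclusion of Theorem~\ref{MT4} lives in the larger ring $\CC[[q_2]](L^4,A_1',X,E_1',B_2')$, and the elimination of $E_1'$ and $B_2'$ is itself only conjectural (Conjecture~8). Your assertion that the fiberwise $I$-function ``identifies the three generators of $\mathsf{QKF}$'' and ``settles the base case'' is therefore too optimistic: the finiteness statement you correctly isolate as the crux --- that the $R$-matrix entries and propagators introduce no series beyond the three named generators --- is exactly the unproven part, and you offer only two possible workarounds (equivariant semisimplification, or a holomorphic anomaly equation with pole bounds) without carrying either out.

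A second, more structural gap: Conjecture~3 is stated for an actual K3 fibration $X\to B$ over an arbitrary base with arbitrary $\gamma\in H_2(B,\ZZ)$, whereas everything in the paper (and in your proposal) is really about twisted theories on products of projective spaces, which recover the geometry of a complete-intersection model only in genus $0$ and $1$ (see the paper's Remark after the definition of the twisted invariants, which points to NMSP or log GLSM techniques as necessary for $g\ge 2$). So even a complete proof of the finite-generation statement for the $\PP^3\times\PP^1$ twisted theory would not by itself prove Conjecture~3 for higher genus; the passage from the twisted theory to the complete intersection, and from one concrete model to a general K3 fibration over a general base, are both missing. Your proposal should be read as a plausible research program consistent with the paper's methods, not as a proof.
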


It is an interesting question to find the modular interpretation of the generators in $\mathsf{QKF}$ after change of variable by mirror map  
$$Q=q Exp(I^{K3}_1(q))\,.$$ 
This will imply the modularity of the Gromov-Witten series of K3-fibrations.

\

\subsection{Twisted theories on $\PP^{n_1} \times \PP^{n_2}$}
Twisted theories associated to $\PP^m$ was introduced in \cite{LP}. We similarly define twisted theories associated to $\PP^{n_1} \times \PP^{n_2}$ and study several cases.

Let $\mathsf{T}_{n_1+1,n_2+1}$ be the algebraic torus
$$(\CC^*)^{n_1+1}\times (\CC^*)^{n_2+1}$$
and let each component $(\CC^*)^{n_i+1}$ of  $\mathsf{T}_{n_1+1,n_2+1}$
act with the standard linearization on $\PP^{n_i}$ with weights $\lambda_{i,0},\dots,\lambda_{i,n_i}$ on the vector space $$H^0(\PP^{n_i},\mathcal{O}_{\PP^{n_i}}(1))\,.$$ 

Let $\overline{M}_{g}(\PP^{n_1}\times\PP^{n_2},(d_1,d_2))$ be the moduli space of stable maps to $\PP^{n_1}\times\PP^{n_2}$ with the canonical $\mathsf{T}_{n_1+1,n_2+1}$-action, and let
\begin{align*}
\mathsf{C}\rightarrow \overline{M}_g(\PP^{n_1}\times\PP^{n_2},(d_1,d_2))\,,\,\,f:\mathsf{C}\rightarrow\PP^{n_1}\ti\PP^{n_2}\,,\,\,
\mathsf{S}_{i} = f^*\mathcal{O}_{\PP^{n_i}(-1)}
\end{align*}
be the standard universal structures. Let
$$\mathsf{a}=(a_{11},\dots,a_{1r};a_{21},\dots,a_{2r})\,,\,\,\,\mathsf{b}=(b_{11},\dots,b_{1s};b_{21},\dots,a_{2s})$$
be vectors of positive integers satisfying the Calabi-Yau conditions
$$\sum_{i=1}^{r}a_{ki}-\sum_{j=1}^s b_{kj}=n_k+1\,\,\,\,\text{for}\,\,k=1,2\,.$$

The Gromov-Witten invariants of the $(\mathsf{a},\mathsf{b})$-twisted geometry of $\PP^{n_1}\times\PP^{n_2}$ are defined by the equivariant integrals
$$\widetilde{N}_{g,(d_1,d_2)}^{\mathsf{GW}}=\int_{[\overline{M}_g(\PP^{n_1}\times\PP^{n_2},(d_1,d_2))]}\prod_{i=1}^r e(R\pi_*\mathsf{S}^{-a_{1i}}\otimes\mathsf{S}^{-a_{2i}})\prod_{j=1}^s e(-R\pi_*\mathsf{S}^{b_{1j}}\otimes\mathsf{S}^{b_{2j}})\,.$$
The above integral define a rational function in equivariant variables $\lambda_{ki}$
$$\widetilde{N}_{g,(d_1,d_2)}^{\mathsf{GW}}\in \CC(\lambda_{1,0},\dots,\lambda_{1,n_1},\lambda_{2,0},\dots,\lambda_{2,n_2})\,.$$

Using the moduli space $\overline{Q}_g(\PP^{n_1}\times\PP^{n_2},(d_1,d_2))$ of stable quasimaps with the standard structures,

\begin{align*}
&\mathsf{C}\rightarrow \overline{Q}_g(\PP^{n_1}\times\PP^{n_2},(d_1,d_2))\,,\,\,f:\mathsf{C}\rightarrow[\CC^{n_1+1}\times\CC^{n_2+1}/\CC^*\times\CC^*]\,, \\
&\mathsf{S}_{i} = f^*\mathcal{O}_{[\CC^{n_i+1}/\CC^*]}(-1)\,,
\end{align*}
the quasimap invariants of the $(\mathsf{a},\mathsf{b})$-twisted geometry of $\PP^{n_1+1}\times\PP^{n_2+1}$ are defined by the equivariant integrals

$$\widetilde{N}_{g,(d_1,d_2)}^{\mathsf{SQ}}=\int_{[\overline{M}_g(\PP^{n_1}\times\PP^{n_2},(d_1,d_2))]}\prod_{i=1}^r e(R\pi_*\mathsf{S}^{-a_{1i}}\otimes\mathsf{S}^{-a_{2i}})\prod_{j=1}^s e(-R\pi_*\mathsf{S}^{b_{1i}}\otimes\mathsf{S}^{b_{2i}})\,.$$

\begin{Rmk}
By Quantum Lefchetz theorem \cite{KKP}, for genus 0 and 1, the $(\mathsf{a},\mathsf{0})$ - twisted quasimap theories of $\PP^{n_1}\times\PP^{n_2}$ recover the quasimap theories of complete intersections of degree $(\mathsf{a})$ in $\PP^{n_1}\times\PP^{n_2}$. For higher genus, more techniques are required to obtain the theories of complete intersections from twisted theories. For quintic threefolds, this was studied using NMSP theory \cite{CGL} or log GLSM theory \cite{GJR}. Using these approaches we expect to be able to study the quasimap theories of Calabi-Yau fibrations for higher genus. We will return to these problems in the future.  
\end{Rmk}

\subsection{Main results}
\subsubsection{Local $\PP^1\times\PP^1$ theory.} Let 
$$\mathcal{F}^{\mathsf{SQ}}_g(q_1,q_2)=\sum_{d_1,d_2\ge 0}N^{\mathsf{SQ}}_{g,(d_1,d_2)}q_1^{d_1}q_2^{d_2}\in\CC[[q_1,q_2]]\,$$
be the genus g quasimap series of local $\PP^1 \times \PP^1$.
In order to state the theorem, we define a series in $q_1$.
$$X(q_1)=(1-8q_1)^{-1/2}=1+4q_1+24q_1^2+160q_1^3+\dots$$

\begin{Thm}\label{MT1}
 For the quasimap invariants of $K_{\PP^1\times\PP^1}$, we have
\begin{itemize}
    \item[(i)] $
     q_1\frac{\partial}{\partial q_1}\mathcal{F}^{\mathsf{SQ}}_1\in\CC[[q_2]][X^2,(1+X^2)^{-1}]\,,$
    \item[(ii)]  $
     \mathcal{F}^{\mathsf{SQ}}_g\in\CC[[q_2]][X^2,(1+X^2)^{-1}]\,\,\,\,\,\text{for}\,\,g\ge2\,.$
 \end{itemize}
In other words, the coefficients of $q_2^k$ in $q_1\frac{\partial}{\partial q_1}\mathcal{F}^{\mathsf{SQ}}_{1},\,\mathcal{F}^{\mathsf{SQ}}_{g\ge2}$ are elements in the ring $$\CC[X^2,(1+X^2)^{-1}]\,.$$
\end{Thm}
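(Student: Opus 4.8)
The plan is to evaluate $\mathcal{F}^{\mathsf{SQ}}_g$ by $\mathsf{T}$-localization on $\overline{Q}_g(\PP^1\times\PP^1,(d_1,d_2))$ and to prove that, after resumming the $q_1$-series, every fixed-locus contribution lands in $\mathcal{R}:=\CC[[q_2]][X^2,(1+X^2)^{-1}]$. The one purely algebraic ingredient I need is that $\mathcal{R}$ is preserved by the operator $\mathsf{D}_1:=q_1\tfrac{\partial}{\partial q_1}$. This follows from the definition $X=(1-8q_1)^{-1/2}$: a direct computation gives $\mathsf{D}_1(X^2)=X^4-X^2\in\CC[X^2]$, and hence $\mathsf{D}_1\big((1+X^2)^{-1}\big)=-(1+X^2)^{-2}(X^4-X^2)=-1+3(1+X^2)^{-1}-2(1+X^2)^{-2}\in\mathcal{R}$. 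Since $\mathsf{D}_1$ commutes with extraction of the coefficient of each $q_2^k$, the ring $\mathcal{R}$ is closed under $\mathsf{D}_1$. This is the analogue for local $\PP^1\times\PP^1$ of the relations for $L,X$ recorded in the Introduction.

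First I would assemble the genus-$g$ localization graph sum. The $\mathsf{T}$-fixed loci of the quasimap space are indexed by decorated graphs whose vertices lie over the four torus-fixed points of $\PP^1\times\PP^1$ and whose edges record the basic quasimaps in each factor; the local Calabi--Yau structure enters through the equivariant Euler class of the obstruction bundle $R\pi_*(\mathsf{S}_1^{2}\otimes\mathsf{S}_2^{2})=R\pi_* f^*K_{\PP^1\times\PP^1}$. As in \cite{LP,Lho}, each contribution factors into edge terms, leg/$S$-matrix terms, and vertex terms carrying Hodge integrals over $\overline{M}_{h,n}$. I would organise this sum in Givental--Teleman form, so that $\mathcal{F}^{\mathsf{SQ}}_g$ becomes a finite sum over stable genus-$g$ graphs in which edges contribute a propagator and vertices contribute the $R$-matrix action on Hodge integrals, all built out of the fundamental solution of the quasimap differential equation of the theory.

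Next I would show that each building block lies in $\mathcal{R}$. The $q_1$-dependence of the fundamental solution is governed by $L(q_1)$, $X(q_1)$ and their $\mathsf{D}_1$-derivatives, so the key step is to prove that the $R$-matrix entries and the propagator are polynomials in $X^2$ and $(1+X^2)^{-1}$ with coefficients in $\CC[[q_2]]$; I would do this by deriving their differential equations from the Picard--Fuchs system of local $\PP^1\times\PP^1$ and checking closure by means of $\mathsf{D}_1(X^2)=X^4-X^2$. The second factor enters only through its degree $d_2$, so extracting the coefficient of $q_2^k$ involves only finitely many graphs of total $q_2$-weight $k$, while the $q_1$-degrees are summed to all orders and resum into $X(q_1)$; this is exactly what produces the asymmetric ring $\CC[[q_2]][X^2,(1+X^2)^{-1}]$.

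The two statements then follow by induction on $g$. For $g\ge 2$ every vertex of a stable genus-$g$ graph is stable, so the vertex Hodge integrals are honest constants multiplying monomials in the $R$-matrix and propagator entries and their $\mathsf{D}_1$-derivatives; since $\mathcal{R}$ contains these entries and is closed under $\mathsf{D}_1$, each term, and hence $\mathcal{F}^{\mathsf{SQ}}_g$, lies in $\mathcal{R}$, proving (ii). In genus $1$ the graph sum carries an unstable valence-one contribution, which is the source of a logarithmic term in $\mathcal{F}^{\mathsf{SQ}}_1$ (the local analogue of the $\log\Delta$ appearing in Conjecture 1); applying $q_1\tfrac{\partial}{\partial q_1}$ removes this term and returns an element of $\mathcal{R}$, proving (i). The main obstacle is the third step: establishing that the $R$-matrix and propagator entries genuinely lie in $\mathcal{R}$, i.e. that the asymptotic expansion of the fundamental solution in the $q_1$-direction closes up under $\mathsf{D}_1$ inside $\CC[[q_2]][X^2,(1+X^2)^{-1}]$. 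Controlling this closure, together with the bookkeeping that keeps $q_2$ formal while fully resumming $q_1$, is the heart of the argument.
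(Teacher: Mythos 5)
Your overall skeleton --- torus localization, a graph sum whose edges and vertices are built from the fundamental solution, and closure of the target ring under $\mathsf{D}_1$ (your computation $\mathsf{D}_1(X^2)=X^4-X^2$ is correct and is indeed the relevant algebraic fact) --- matches the paper's strategy. But there is a genuine gap at exactly the step you call the heart of the argument: it is not true that every fixed-locus contribution lands in $\mathcal{R}=\CC[[q_2]][X^2,(1+X^2)^{-1}]$. The asymptotic data attached to the fixed point $p_{ij}$ lives a priori in $\mathsf{G}_{ij}[[q_2]]$ with $\mathsf{G}_{ij}=\CC[L_{ij},(1+\beta_jL_{ij})^{-1/2}]$, where the four series $L_{ij}$ are \emph{distinct} roots depending on the fixed point, and the $q_2$-coefficients of $I_{11}$ force the further square root $r_{ij}=\sqrt{L_{ij}}$ with \emph{odd} powers appearing (Lemma \ref{FG2}). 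So individual graph contributions are not even rational in $X$. Passing from this to $\CC[X^2,(1+X^2)^{-1}]$ requires three additional arguments that your proposal omits: (1) a parity count showing that in each edge and vertex contribution the total order of the square-root factors $r_{ij}$ and $(1+\alpha_iL_{ij})^{-1/2}$ is even; (2) a reality argument to convert $(\sqrt{-1}+X)^{-1},(\sqrt{-1}-X)^{-1}$ into $(1+X^2)^{-1}$; and (3), most substantially, the fact that the $\mathds{R}_{n,ij}$ admit the \emph{same} polynomial expression in $L_{ij}$ for all $(i,j)$ --- established in the paper via the mirror oscillatory integral following \cite{CCIT2} and \cite[Appendix A]{LP3} --- so that $\mathcal{F}^{\mathsf{SQ}}_g$ is a symmetric function of the roots $L_{0j},L_{1j}$ of an explicit quadratic and hence even in $X$. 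Without (3) your argument can at best reach $\CC[[q_2]][X,(1+X^2)^{-1}]$, which is strictly weaker than the claimed $X^2$-statement.

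A secondary weakness: the genus-one case is asserted rather than derived. The low-degree/unstable contributions are precisely what the paper's auxiliary generators $\mathds{J}_{1i},\mathds{K}_{1i},\mathds{M}_{1i}$ and the explicit relations of Proposition \ref{MG} are designed to control, and one must verify that these series (after applying $q_1\frac{\partial}{\partial q_1}$) land in the same ring; "a logarithm appears and is killed by $q_1\partial_{q_1}$" is a heuristic, not a proof, and it again ignores the evenness-in-$X$ issue above.
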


\subsubsection{Elliptic fibration : K3 surface}
Let 

\begin{align*}
    \mathcal{F}_{g}^{\mathsf{SQ}}(q_1,q_2)=\sum_{d_1,d_2 \ge 0} N^{\mathsf{SQ}}_{g,(d_1,d_2)} q_1^{d_1}q_2^{d_2}\in \CC[[q_1,q_2]]
\end{align*}
be the
$((3;2),(0;0))$-twisted genus $g$ quasimap series of $\PP^2 \times \PP^1$.

Recall the series which generate the ring $\mathsf{QEF}$,
\begin{align}\label{EG}
    \nonumber L(q_1)&=(1-27q_1)^{-\frac{1}{3}}=1+9q_1+162q_1^2+\dots\,,\\
    B_1'(q_1)&=q_1 \frac{\partial}{\partial q_1}I^{E}_1\,,\\
    \nonumber X(q_1)&=\frac{q_1 \frac{\partial}{\partial q_1} B_1'}{1+B_1'}\,.
\end{align}

\begin{Thm}\label{MT2}
  For the $((3;2),(0;0))$-twisted genus g quasimap series of $\PP^2 \times \PP^1$, we have
  \begin{itemize}
      \item[(i)] $q_1\frac{\partial}{\partial q_1}\mathcal{F}_{1}^{\mathsf{SQ}}(q_1,q_2)\in\CC[[q_2]](L^3,B_1',X)\,,$
      \item[(ii)] $\mathcal{F}_{g}^{\mathsf{SQ}}(q_1,q_2)\in\CC[[q_2]](L^3,B_1',X)\,\,\text{for}\,\,g\ge2\,.$
  \end{itemize}
\end{Thm}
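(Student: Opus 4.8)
The plan is to adapt the torus-localization strategy of \cite{LP} and \cite{Lho} from a single projective space to the product $\PP^2\times\PP^1$. First I would run the $\mathsf{T}_{3,2}$-equivariant localization on the quasimap graph space attached to $\overline{Q}_g(\PP^2\times\PP^1,(d_1,d_2))$ in the stability limit $\ke\to 0^+$, taking the non-equivariant Calabi-Yau limit at the end. The $\mathsf{T}_{3,2}$-fixed loci are indexed by decorated graphs whose vertices carry a genus $g_v$, one of the six fixed points $(p_i,p_j)$ of $\PP^2\times\PP^1$, and a quasimap degree, and whose edges correspond to $\mathsf{T}$-invariant rational curves. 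The genus $g$ series then becomes a sum of graph contributions, each a product of vertex contributions (integrals over $\overline{M}_{g_v,n_v}$ paired with the genus $0$ descendent data of the twisted theory) and edge contributions governed by the fundamental solution of the quantum differential equation in the $q_1$-direction. The twist by $R\pi_*\mathsf{S}_1^{-3}\otimes\mathsf{S}_2^{-2}$ enters every contribution through its equivariant Euler class, and this is the only place where the two projective factors are coupled. Since the finite generation is asserted only in $q_1$ (the cubic-curve fiber direction), I treat $q_2$ (the base $\PP^1$ direction) as a formal parameter; the base, being a rational curve, is expected to contribute arbitrary power series, which is exactly why the coefficient ring is $\CC[[q_2]]$ rather than a further finitely generated object.

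Next I would isolate the building blocks and rewrite them in the generators $L^3,B_1',X$. The genus $0$ input is the $I$-function of the $((3;2),(0;0))$-twisted geometry; from its asymptotic expansion in $z$ and a Birkhoff factorization one extracts the $S$-matrix and the $R$-matrix $R(z)=\one+R_1z+R_2z^2+\cdots$. After the elliptic mirror map $Q_1=q_1\,\mathrm{Exp}(I_1^E(q_1))$ I would show, by an order-by-order induction on the quantum differential equation, that every entry of $R(z)$, every edge propagator, and every vertex weight lies in $\CC[[q_2]](L^3,B_1',X)$. The engine for this is the $\mathsf{D}$-closure of $\mathsf{QEF}$ recorded in the excerpt, namely
$$\mathsf{D}L=\frac{L}{3}(L^3-1)\,,\qquad X^2-(L^3-1)X+\mathsf{D}X-\frac{2}{9}(L^3-1)=0\,,$$
together with $\mathsf{D}B_1'=X(1+B_1')$ coming from the definition of $X$; these guarantee that the recursive solution of the $q_1$-differential equations never leaves the ring. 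Here $X$ plays the role of the propagator, while $L^3$ and the mirror-map derivative $B_1'$ account for the remaining algebraic $q_1$-dependence.

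With the building blocks in the ring, the third step is to assemble the graph sum and control the degree in the propagator $X$. Each edge contributes one factor built from $X$ and each vertex contributes an element of $\CC[[q_2]][L^{\pm3},B_1']$; because a genus $g$ stable graph has at most $3g-3$ edges, the total contribution is a polynomial in $X$ of bounded degree with coefficients in $\CC[[q_2]][L^{\pm3},B_1']$, hence an element of $\CC[[q_2]](L^3,B_1',X)$. This proves part (ii). For part (i) the bare genus $1$ potential also receives an unstable, constant-map type contribution that is only rational after one differentiation; this is precisely why the statement for $g=1$ is phrased for $q_1\frac{\partial}{\partial q_1}\mathcal{F}_1^{\mathsf{SQ}}$. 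Applying $\mathsf{D}=q_1\frac{\partial}{\partial q_1}$ and using the $\mathsf{D}$-closure of $\mathsf{QEF}$ removes the offending non-polynomial piece and lands the result in the ring.

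The step I expect to be the main obstacle is the second one: proving that the $R$-matrix entries --- equivalently, the solutions of the twisted quantum differential equation of $\PP^2\times\PP^1$ in the $q_1$-direction --- lie in $\CC[[q_2]](L^3,B_1',X)$ and close under $\mathsf{D}$. This is where the specific generators $L^3,B_1',X$ are forced, and it demands a careful analysis of how the twisting bundle $R\pi_*\mathsf{S}_1^{-3}\otimes\mathsf{S}_2^{-2}$ couples the fiber and base: one must verify that this coupling, expanded in $q_2$, introduces no transcendental function of $q_1$ beyond $X$ and produces no generator outside $\mathsf{QEF}$ at any order. Establishing this closure, rather than the subsequent combinatorial bookkeeping of the graph sum, is the crux of the argument.
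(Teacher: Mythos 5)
Your overall strategy coincides with the paper's: torus localization over decorated graphs (Proposition \ref{DeCo}), an order-by-order analysis in $q_2$ of the Picard--Fuchs equations and of the Birkhoff factorization to place the asymptotic data $\mathds{U}_{ij},\mathds{R}_{n,ij}$ and the $\mathds{S}$-operator coefficients in a finitely generated ring (Lemma \ref{L321}, Proposition \ref{L322}), and the $\mathsf{D}$-closure of $\mathsf{QEF}$ to drive the induction. There is, however, one genuine gap. In your third step you assert that each vertex contributes an element of $\CC[[q_2]][L^{\pm3},B_1']$, so that the graph sum lands directly in $\CC[[q_2]](L^3,B_1',X)$. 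That is not what localization produces: a vertex over the fixed point $(\lambda_{1,i},\lambda_{2,j})$ is expressed in the fixed-point-dependent series $L_{ij}$, and $L_{ij}$ is a cube root --- $L_{ij}^3$ is rational in $q_1$ but $L_{ij}$ is not, and $L$ does not lie in $\CC(L^3,B_1',X)$. A priori the graph sum therefore only lies in the larger field generated by $L$ itself. The descent to $L^3$ requires the symmetrization step of Section \ref{Mainpf}: the polynomial expressions of the $\mathds{R}_{n,ij}$ in $L_{ij}$ are independent of $(i,j)$ (the oscillatory-integral argument attributed to Iritani, cf.\ the appendix of \cite{LP3}), so the total series is a symmetric function of the three roots $L_{0j},L_{1j},L_{2j}$ of a cubic with coefficients in $\CC(q_1)=\CC(L^3)$, and only then does it land in $\CC[[q_2]](L^3,B_1',X)$. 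Without this step the theorem as stated is not reached.

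A smaller correction: do not change variables by the mirror map $Q_1=q_1\,\mathrm{Exp}(I_1^E)$. The theorem concerns the quasimap series in the variable $q_1$; the generators $L,B_1',X$ and the closure relations $\mathsf{D}L=\frac{L}{3}(L^3-1)$, etc., are stated in $q_1$, and the induction on the Picard--Fuchs equations is carried out directly in $q_1$ and $q_2$ --- this is exactly how Lemma \ref{L321} and Proposition \ref{L322} are obtained. Passing to $Q_1$ would move you to the Gromov--Witten side, which is the subject of the wall-crossing conjecture, not of this statement. The rest of your outline --- identifying the closure of the $R$-matrix entries under $\mathsf{D}$ as the crux, bounding the degree in $X$ by the number of edges, and handling the unstable genus-one term via $q_1\frac{\partial}{\partial q_1}$ --- matches the paper.
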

From the low degree calculations, we make the following conjecture.

\begin{Conj}
  For the $((3;2),(0;0))$-twisted genus g quasimap series of $\PP^2 \times \PP^1$, we have
 \begin{itemize}
      \item[(i)] $q_1\frac{\partial}{\partial q_1}\mathcal{F}_{1}^{\mathsf{SQ}}(q_1,q_2)\in\CC[[q_2]][L^3,B_1',X]\,,$
      \item[(ii)] $\mathcal{F}_{g}^{\mathsf{SQ}}(q_1,q_2)\in\CC[[q_2]][L^3,B_1',X]\,\,\text{for}\,\,g\ge2\,.$
  \end{itemize}
\end{Conj}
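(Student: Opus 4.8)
The plan is to upgrade Theorem~\ref{MT2}, which already places each coefficient of $q_2^k$ in $\mathcal{F}_g^{\mathsf{SQ}}$ (resp. in $q_1\partial_{q_1}\mathcal{F}_1^{\mathsf{SQ}}$) inside the rational function ring $\CC[[q_2]](L^3,B_1',X)$, to the polynomial statement by isolating exactly which denominators the method of \cite{LP,Lho} can produce and proving that they cancel. The guiding fact is that the target ring is already a differential subring: from the relations of the excerpt one computes $\mathsf{D}(L^3)=L^3(L^3-1)$, $\mathsf{D}B_1'=X(1+B_1')$ and $\mathsf{D}X=-X^2+(L^3-1)X+\tfrac{2}{9}(L^3-1)$, all of which lie in $\CC[L^3,B_1',X]$ with no denominators. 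Hence the only possible obstructions to polynomiality are (a) negative powers of $L^3$ and (b) negative powers of $(1+B_1')$, and the whole problem is to rule these out. I will treat $q_2$ coefficientwise throughout and induct on the genus $g$, using the three distinguished points of the one-parameter family in $q_1$ determined by $L^3=(1-27q_1)^{-1}$: the cusp $q_1=0$ ($L^3=1$), the conifold $q_1=\tfrac{1}{27}$ ($L^3=\infty$), and the orbifold point $q_1=\infty$ ($L^3=0$).

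First I set up the holomorphic anomaly recursion in the $q_1$-direction, following the $R$-matrix graph-sum computation behind Theorem~\ref{MT2}, in the polynomial normalization in which $X$ is the single propagator; this is legitimate because $X$ satisfies the Riccati relation $\mathsf{D}X=-X^2+(L^3-1)X+\tfrac{2}{9}(L^3-1)$. The recursion expresses the partial derivative $\partial_X\mathcal{F}_g^{\mathsf{SQ}}$ (in the chart $(L^3,B_1',X)$, in which $L^3,B_1',X$ are algebraically independent) as a universal quadratic expression in the $\partial_X$- and $\mathsf{D}$-derivatives of the lower-genus potentials. By the induction hypothesis this right-hand side is polynomial in $X$ with coefficients in $\CC[[q_2]][L^3,B_1']$, so integrating once in $X$ gives $\mathcal{F}_g^{\mathsf{SQ}}$ as a polynomial in $X$ plus an $X$-independent holomorphic ambiguity $f_g\in\CC[[q_2]](L^3,B_1')$. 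This reduces (ii) to showing that $f_g$ is polynomial in $L^3,B_1'$; the genus-one assertion (i) is the base case, obtained by running the same argument for the genus-one anomaly equation satisfied by $q_1\partial_{q_1}\mathcal{F}_1^{\mathsf{SQ}}=\mathsf{D}\mathcal{F}_1^{\mathsf{SQ}}$.

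It remains to control $f_g$, a priori only rational in $L^3,B_1'$. Obstruction (a), negative powers of $L^3$, is excluded by regularity at the orbifold point $q_1=\infty$: I would show that the analytic continuation of $f_g$ to $L^3=0$ is holomorphic, so that $f_g$ has no pole there; a finite pole order along $L^3=\infty$ (the conifold) then follows from the conifold gap condition for the quasimap potentials, bounding the order by $2g-2$ and hence bounding the degree of $f_g$ in $L^3$. The decisive obstruction is (b). The factors $(1+B_1')^{-1}$ enter only through the mirror-map Jacobian $\mathsf{D}t=1+B_1'$, which converts the flat-coordinate derivatives $\partial_t$ appearing in the graph sum into the algebraic derivatives $\mathsf{D}$; since $\mathsf{D}\log(1+B_1')=X$, such factors ought to reassemble into powers of the propagator $X$ rather than genuine poles. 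The plan for the main step is therefore a weight bookkeeping on the localization graph sum: assign to each graph contribution its total power of $(1+B_1')^{-1}$ and show that, after summing over all graphs of a fixed topological type, each genus-$h$ valence-$n$ vertex contributes a compensating factor of $(1+B_1')$ from the normalization of its virtual class, so that the net power is non-negative.

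The step I expect to be the real obstacle is precisely this cancellation of the mirror-map Jacobian. For the single-factor $\PP^m$ twisted theories of \cite{LP} the compensation can be checked vertex-by-vertex, but here the two projective factors of $\PP^2\times\PP^1$ are coupled through the twist $\mathsf{S}^{-3}\otimes\mathsf{S}^{-2}$, so the vertex contributions mix the $L^3$- and $B_1'$-data and the cancellation is no longer term-by-term; it must instead follow from a summation identity over the insertions at each vertex, which is the part not visible to the rational-function argument proving Theorem~\ref{MT2}. Once regularity along $1+B_1'=0$ is established in this way, $f_g$ is a pole-free rational function of $L^3$ and $B_1'$, hence a polynomial, and together with the $X$-polynomiality above this gives $\mathcal{F}_g^{\mathsf{SQ}},\,q_1\partial_{q_1}\mathcal{F}_1^{\mathsf{SQ}}\in\CC[[q_2]][L^3,B_1',X]$.
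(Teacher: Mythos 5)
The statement you are trying to prove is stated in the paper only as a \emph{conjecture}: the author proves (Theorem \ref{MT2}) that the series lie in the rational function ring $\CC[[q_2]](L^3,B_1',X)$ and then, ``from the low degree calculations'' (essentially the single evaluation $q_1\partial_{q_1}\mathcal{F}_1^{\mathsf{SQ}}(q_1,0)=-X$), conjectures the polynomial refinement. There is no proof in the paper to compare against, so your proposal must be judged as an attempted proof of an open statement, and as such it is not complete: it is a strategy outline with at least three load-bearing steps that are asserted rather than established. First, you assume a holomorphic anomaly recursion of the form $\partial_X\mathcal{F}_g^{\mathsf{SQ}}=(\text{quadratic in lower genus})$ in the chart $(L^3,B_1',X)$. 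No such equation is derived in the paper for the $((3;2),(0;0))$-twisted theory on $\PP^2\times\PP^1$; the paper's argument for Theorem \ref{MT2} is a localization graph sum plus the Birkhoff-factorization relations of Proposition \ref{L322}, and extracting a clean single-propagator anomaly equation from the two-parameter, two-factor geometry (where the $q_2$-direction data $\mathds{UD}_{ij}$, $A_{n,k}$, etc.\ enter every edge term) is itself a substantial piece of work, not a citation.

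Second, even granting the recursion, your control of the holomorphic ambiguity $f_g$ rests on a conifold gap condition and on regularity at the orbifold point $L^3=0$ for quasimap (B-model) potentials of this fibered geometry; neither is proved or referenced in a form that applies here, and the gap condition in particular is a conjecture of comparable depth to the statement you are trying to prove. Third, and as you yourself flag, the cancellation of the $(1+B_1')^{-1}$ factors coming from the mirror-map Jacobian is exactly the point where the rational-function argument of the paper stops, and you offer only the expectation that a ``summation identity over the insertions at each vertex'' will produce it. That identity is the actual content of the conjecture: the paper's Proposition \ref{L322} and its appendix formulas show explicitly that the individual building blocks $A_n', B_n',\dots$ carry denominators $(1+B_1')^2$ and $(2+3L+3L^2)$, so polynomiality of the assembled potential is a nontrivial global cancellation that no step of your proposal actually demonstrates. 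The differential-ring bookkeeping at the start ($\mathsf{D}L^3=L^3(L^3-1)$, $\mathsf{D}B_1'=X(1+B_1')$, $\mathsf{D}X=-X^2+(L^3-1)X+\tfrac{2}{9}(L^3-1)$) is correct and the overall architecture is the standard and reasonable one, but as written this is a research program, not a proof.
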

For example, in genus 1 we have

$$q_1\frac{\partial}{\partial q_1}\mathcal{F}_1^{\mathsf{SQ}}(q_1,0)=-X\,.$$

\

\subsubsection{Elliptic fibration : CY 3-fold.} Let
\begin{align*}
    \mathcal{F}_{g}^{\mathsf{SQ}}(q_1,q_2)=\sum_{d_1,d_2 \ge 0} N^{\mathsf{SQ}}_{g,(d_1,d_2)} q_1^{d_1}q_2^{d_2}\in \CC[[q_1,q_2]]
\end{align*}

be the $((3;3),(0;0))$-twisted genus $g$ quasimap series of $\PP^2\times\PP^2$. We use the same generators \eqref{EG} in the following theorem.

\begin{Thm}\label{MT3}
For the $((3;3),(0;0))$-twisted genus g quasimap series of $\PP^2 \times \PP^2$, we have
\begin{itemize}
    \item[(i)] $q_1\frac{\partial}{\partial  q_1}\mathcal{F}_{1}^{\mathsf{SQ}}(q_1,q_2)\in\CC[[q_2]](L^3,B_1',X)\,,$
    \item[(ii)] $
      \mathcal{F}_{g}^{\mathsf{SQ}}(q_1,q_2)\in\CC[[q_2]](L^3,B_1',X)\,\,\,\text{for}\,\,g\ge2\,.$
\end{itemize}
\end{Thm}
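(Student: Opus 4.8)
The plan is to adapt the Givental--Teleman reconstruction used for the single-factor twisted theories in \cite{LP} to the $((3;3),(0;0))$-twisted theory of $\PP^2\times\PP^2$. The torus $\mathsf{T}_{3,3}$ acts with $3\times 3=9$ isolated fixed points, so the equivariant twisted quasimap theory is a semisimple cohomological field theory, and the genus-$g$ partition function $\mathcal{F}^{\mathsf{SQ}}_g$ is reconstructed from its genus-$0$ data by a sum over stable graphs $\Gamma$ of genus $g$: each graph contributes a product of vertex factors ($\psi$-integrals over $\overline{M}_{g_v,n_v}$ weighted by the normalizations $\Delta_v$), edge factors assembled from the $R$-matrix through the standard edge bivector, and leg factors $R^{-1}(\psi)$, all taken in the non-equivariant limit permitted by the Calabi--Yau condition. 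First I would carry out the quasimap localization to extract the genus-$0$ fundamental solution $S(z)$ directly from the twisted $I$-function (no mirror map is needed on the quasimap side) and read off its asymptotic expansion to obtain $R$, $\Delta$, and the dilaton-shift data.

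The core of the argument is to show that every building block, expanded as a power series in $q_2$, has coefficients in the field $\CC(L^3,B_1',X)$ of the $q_1$-variable. At $q_2=0$ the twisted $I$-function degenerates to the cubic-in-$\PP^2$ series $I^E_1=3\sum_d \tfrac{(3d-1)!}{(d!)^3}q_1^d$, and one has $B_1'=\mathsf{D}I^E_1$, $X=\mathsf{D}B_1'/(1+B_1')$, while $L=(1-27q_1)^{-1/3}$ is fixed by the associated Picard--Fuchs data; hence the $q_2=0$ building blocks lie in $\CC(L^3,B_1',X)$. The decisive structural input is that this ring is closed under $\mathsf{D}=q_1\partial_{q_1}$ through
$$\mathsf{D}(L^3)=L^3(L^3-1),\qquad X^2-(L^3-1)X+\mathsf{D}X-\tfrac{2}{9}(L^3-1)=0,$$
so that I may then induct on the order in $q_2$: turning on $q_2$ perturbs $S(z)$, $R$, and $\Delta$ by terms produced from the $q_2=0$ solution through finitely many applications of $\mathsf{D}$ and multiplication by ring elements, and each $q_2$-coefficient therefore remains inside $\CC(L^3,B_1',X)$.

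With the building blocks controlled, assembling the graph sum introduces only numerical intersection numbers on the spaces $\overline{M}_{g_v,n_v}$ together with finite sums of products of ring elements; since $\CC(L^3,B_1',X)$ is a $\CC$-algebra closed under $\mathsf{D}$, each $q_2$-coefficient of $\mathcal{F}^{\mathsf{SQ}}_g$ lands in it, which is part (ii) for $g\ge 2$. The building blocks genuinely carry denominators (the factor $(1+B_1')^{-1}$ inside $X$, and the $\Delta^{-1/2}$ normalizations), so the graph sum a priori produces an element of the \emph{fraction field} $\CC(L^3,B_1',X)$ appearing in the statement; proving that these denominators cancel is exactly the stronger polynomiality asserted in the subsequent Conjecture and is not needed here. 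For genus $1$, Givental's formula contains a $\log\Delta$-type vertex which is not itself a ring element; applying $\mathsf{D}=q_1\partial_{q_1}$ converts it into $\mathsf{D}\Delta/\Delta\in\CC(L^3,B_1',X)$ and kills the undetermined constant of integration, which is precisely why part (i) is stated for $q_1\partial_{q_1}\mathcal{F}^{\mathsf{SQ}}_1$. The restriction to $q_2=0$, where the theory collapses to the cubic-in-$\PP^2$ elliptic-curve theory, offers a direct low-order check against the known genus-$1$ value in $\mathsf{QEF}$.

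The step I expect to be the main obstacle is controlling the coupling between the two factors induced by the twisting class $e(R\pi_*\mathsf{S}_1^{-3}\otimes\mathsf{S}_2^{-3})$. Because this class mixes the two universal line bundles, the $R$-matrix of the product theory is not the tensor product of the two single-factor $R$-matrices, and one must verify directly that the resulting mixed entries, after localization at the $9$ fixed points, still reduce order by order in $q_2$ to expressions built from $L^3$, $B_1'$, $X$ and their $\mathsf{D}$-derivatives. This is where the explicit hypergeometric form of the twisted $I$-function must be combined most carefully with the closure relations; it is the exact analogue of the corresponding step in the proof of Theorem \ref{MT2}, now with the second factor of dimension $2$ rather than $1$.
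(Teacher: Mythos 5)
Your proposal is correct and follows essentially the same route as the paper: a fixed-point graph sum (the paper derives it by virtual localization, Proposition \ref{DeCo}, rather than by invoking Givental--Teleman, but the resulting vertex/edge structure is identical), control of the asymptotic data $\mathds{L}_{ij},\mathds{UD}_{ij},\mathds{R}_{n,ij}$ by an order-by-order-in-$q_2$ analysis of the Picard--Fuchs equations (Lemma \ref{L331}), and explicit Birkhoff-factorization relations expressing the mixed $\mathds{S}$-operator entries through $B_1'$ and $L_{ij}$ (Proposition \ref{L332}), assembled exactly as in Section \ref{Mainpf}. The coupling between the two $\PP^2$ factors that you flag as the main obstacle is indeed what the paper resolves by the explicit relations recorded in Appendix \ref{APE}.
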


From the genus one calculation
$$q_1\frac{\partial}{\partial q_1}\mathcal{F}_1^{\mathsf{SQ}}(q_1,0)=-\frac{1}{4}(L^3-1)-\frac{3}{2}X\,,$$
we make the following conjecture.

\begin{Conj}
  For the $((3;3),(0;0))$-twisted genus g quasimap series of $\PP^2 \times \PP^2$, we have
\begin{itemize}
    \item[(i)] $q_1\frac{\partial}{\partial  q_1}\mathcal{F}_{1}^{\mathsf{SQ}}(q_1,q_2)\in\CC[[q_2]][L^3,B_1',X]\,,$
    \item[(ii)] $
      \mathcal{F}_{g}^{\mathsf{SQ}}(q_1,q_2)\in\CC[[q_2]][L^3,B_1',X]\,\,\,\text{for}\,\,g\ge2\,.$
\end{itemize}
\end{Conj}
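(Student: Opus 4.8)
The plan is to take Theorem \ref{MT3} as input and to upgrade its conclusion from the ring of rational functions $\CC[[q_2]](L^3,B_1',X)$ to the polynomial ring $\CC[[q_2]][L^3,B_1',X]$; equivalently, to prove that the denominators appearing in the proof of Theorem \ref{MT3} cancel in the final answer. Recall the mechanism behind Theorem \ref{MT3}: after $\mathsf{T}$-localization on the quasimap graph space of $\PP^2\times\PP^2$, each coefficient of $q_2^{k}$ in $\mathcal{F}_g^{\mathsf{SQ}}$ is a finite graph sum whose vertices carry genus-$g_v$ twisted Hodge integrals and whose edges carry the genus-$0$ two-point data. By the relations of \cite{LP} recalled in the introduction, all of the genus-$0$ input lies in $\mathsf{QEF}$ with coefficients in $\CC[[q_2]]$, and the graph sum then lands in the rational ring because the edge propagators and the normalizations of the fundamental solution involve division by the genus-$0$ three-point function.

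First I would record that the polynomial ring $\CC[[q_2]][L^3,B_1',X]$ is already closed under $\mathsf{D}$: from the relations of the introduction one computes $\mathsf{D}L^3=L^3(L^3-1)$, $\mathsf{D}B_1'=X(1+B_1')$, and $\mathsf{D}X=-X^2+(L^3-1)X+\frac{2}{9}(L^3-1)$, each a polynomial in the three generators. Thus $\mathsf{D}$ is not a source of denominators, and the only denominators that can enter the reconstruction are of two kinds: the discriminant-type factor $L^3-1$ and the unit factor $1+B_1'$ (the $L$-denominators are harmless since $L^{\pm3}$ already belongs to the ring). So the whole content of the Conjecture is the cancellation of these two families of denominators, uniformly in the base degree $k$.

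The main step is a pole-cancellation argument, which I would organize as an induction on $g$ using the degeneration/holomorphic-anomaly recursion expressing $\mathcal{F}_g^{\mathsf{SQ}}$ through lower-genus series and the genus-$0$ propagator, with the genus-$1$ seed $q_1\partial_{q_1}\mathcal{F}_1^{\mathsf{SQ}}(q_1,0)=-\frac{1}{4}(L^3-1)-\frac{3}{2}X$ as base case. The factor $L^3-1$ is the easier one: it vanishes at the large-volume point $q_1=0$, while $\mathcal{F}_g^{\mathsf{SQ}}$ is by construction a genuine power series in $q_1$, so any surviving negative power of $L^3-1$ would produce a pole at $q_1=0$, which is impossible; exploiting that $L^3,B_1',X$ are functions of the single variable $q_1$, this forces all $L^3-1$ denominators to clear.

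The hard part is the unit factor $1+B_1'$. Because $1+B_1'=1+O(q_1)$ is invertible in $\CC[[q_1]]$, its inverse is a bona fide power series, so the regularity argument above gives no leverage against it and the cancellation must be proved at the level of the graph sum itself. I expect this to require producing a gauge for the $R$-matrix in which its entries are manifestly polynomial in $L^3,B_1',X$ -- as opposed to the manifestly rational gauge that suffices for Theorem \ref{MT3} -- or, equivalently, a holomorphic anomaly equation for $\mathcal{F}_g^{\mathsf{SQ}}$ whose right-hand side is visibly denominator-free. Establishing such a polynomial normalization simultaneously for every coefficient of $q_2^{k}$ is the principal obstacle, and it is precisely what separates the proved rational statement of Theorem \ref{MT3} from the conjectural polynomial refinement.
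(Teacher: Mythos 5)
You should first be aware that the statement you are proving is stated in the paper as a \emph{conjecture}: the paper offers no proof, only the low-degree evidence $q_1\frac{\partial}{\partial q_1}\mathcal{F}_1^{\mathsf{SQ}}(q_1,0)=-\frac{1}{4}(L^3-1)-\frac{3}{2}X$ together with the proved rational statement of Theorem \ref{MT3}. Your proposal is likewise not a proof but a programme, and you say so yourself: you explicitly defer the cancellation of the $1+B_1'$ denominators to a ``polynomial gauge for the $R$-matrix'' whose existence you do not establish. That is the entire content of the conjecture, so nothing has been proved.

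Beyond the admitted incompleteness, two concrete points in your sketch are wrong or unsupported. First, your argument for clearing the $(L^3-1)$-denominators does not work: regularity of $\mathcal{F}_g^{\mathsf{SQ}}$ at $q_1=0$ only says that the numerator vanishes to sufficient order in $q_1$, not that the expression lies in the polynomial ring $\CC[[q_2]][L^3,B_1',X]$. For instance $X/(L^3-1)$ is a perfectly good element of $\CC[[q_1]]$ (both $X$ and $L^3-1$ are $O(q_1)$) but is not a polynomial in the generators; so ``no pole at $q_1=0$'' does not force the denominator to clear. Second, your inventory of possible denominators is incomplete relative to what the paper's own computations produce: the explicit relations in Section \ref{GE1} and Appendix \ref{APE} exhibit denominators $(1+B_1')^2(2+3L+3L^2)$, and the factor $2+3L+3L^2$ is neither a unit in $\CC[L^{\pm 3}]$ nor one of the two families you propose to cancel. (It divides $3L^3-L-2$, not $L^3-1$.) Any genuine attack on the conjecture would have to account for these factors as well, presumably by tracking how the symmetrization over the fixed points $\lambda_{1,i}$ recombines the $L_{ij}$-dependence — the mechanism used at the end of Section \ref{Mainpf} — rather than by the pole analysis you outline.
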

\

\subsubsection{K3 fibration}
 Let
\begin{align*}
    \mathcal{F}_{g}^{\mathsf{SQ}}(q_1,q_2)=\sum_{d_1,d_2 \ge 0} N^{\mathsf{SQ}}_{g,(d_1,d_2)} q_1^{d_1}q_2^{d_2}\in \CC[[q_1,q_2]]
\end{align*}
be the $((4;2),(0;0))$-twisted genus $g$ quasimap series of $\PP^3\times\PP^1$. In order to state the theorem, we need the generators \eqref{KG} of the ring $\mathsf{QKF}$ and the extra generators $E_1', B_2'$ whose definition will appear in  Section \ref{K3Gen}.

\begin{Thm}\label{MT4}
For the $((4;2),(0;0))$-twisted genus g quasimap series of $\PP^3 \times \PP^1$, we have
  \begin{itemize}
    \item[(i)] $q_1\frac{\partial}{\partial q_1}
      \mathcal{F}_{1}^{\mathsf{SQ}}(q_1,q_2)\in\CC[[q_2]](L^4,A_1',X,E_1',B_2')\,,$
  \item[(ii)] $
      \mathcal{F}_{g}^{\mathsf{SQ}}(q_1,q_2)\in\CC[[q_2]](L^4,A_1',X,E_1',B_2')\,\,\,\text{for}\,\,g\ge2\,.$
  \end{itemize}
\end{Thm}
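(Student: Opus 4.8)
The plan is to follow the localization-and-quantization strategy developed for the $\PP^m$ twisted theories in \cite{LP, Lho}, now adapted to the product target $\PP^3\times\PP^1$. First I would apply the $\mathsf{T}_{4,2}$-equivariant virtual localization theorem to the $((4;2),(0;0))$-twisted virtual class on $\overline{Q}_g(\PP^3\times\PP^1,(d_1,d_2))$. The torus fixed loci are indexed by decorated graphs whose vertices lie over the eight fixed points $p_{i,j}$ of $\PP^3\times\PP^1$ and whose contributions factor into vertex pieces (local quasimap/Hodge integrals) and edge pieces. Since the fiber direction $\PP^3$ carries the variable $q_1$ and the base direction $\PP^1$ carries $q_2$, and since we resum over all fiber degrees $d_1$ while keeping $q_2$ formal, the essential point is to control the $q_1$-dependence of these contributions and to show that, for each fixed power $q_2^{d_2}$, the full localization sum resums into the target ring $\CC[[q_2]](L^4,A_1',X,E_1',B_2')$.

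The genus-0 input is the fundamental solution of the quantum differential equation in the $\PP^3$ direction. I would first record the $I$-function of the quartic, whose hypergeometric series $\sum_d \tfrac{(4d)!}{(d!)^4}q_1^d$ underlies $I^{K3}_1$, and express its derivatives through $L, A_1', X$ together with the additional periods packaged as $E_1', B_2'$. The quartic carries a rank-four Picard--Fuchs system, as opposed to the rank-three system governing the cubic/elliptic theories of Theorems \ref{MT2} and \ref{MT3}; this is precisely what forces two further independent periods to be adjoined here, whereas $L^3, B_1', X$ already sufficed in the elliptic-fibration cases. Using the Picard--Fuchs recursion I would verify that, for $\mathsf{D}=q_1\frac{d}{dq_1}$ acting on the generators, the ring $\CC[[q_2]](L^4,A_1',X,E_1',B_2')$ is preserved, extending the relations
\begin{align*}
&\mathsf{D}L=\frac{L}{4}(L^4-1)\,,\\
&X^2-2\mathsf{D}X+\frac{1}{16}(12L^8-11L^4-1)=0
\end{align*}
by analogous first-order differential relations for $E_1'$ and $B_2'$. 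This closure under $\mathsf{D}$ is what keeps the localization output inside a finitely generated ring.

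With the genus-0 data fixed, I would invoke the Givental--Teleman quantization of the semisimple cohomological field theory obtained after turning on the equivariant parameters $\lambda_{k,i}$: the genus-$g$ potential is a sum over stable graphs with edges decorated by the $R$-matrix $R(z)$ and vertices by the topological part of the theory. The core step is to prove that the entries of $R(z)$, read off from the asymptotic expansion of the flat sections of the quantum connection, have coefficients in $\CC[[q_2]](L^4,A_1',X,E_1',B_2')$. Granting this, the quantization formula together with $\mathsf{D}$-closure forces $\mathcal{F}_g^{\mathsf{SQ}}$ into the same ring for $g\ge 2$, and forces $q_1\frac{\partial}{\partial q_1}\mathcal{F}_1^{\mathsf{SQ}}$ into it in genus one; the appearance of the derivative in (i), and of rational rather than polynomial expressions (the parentheses $(\dots)$ in the statement, as opposed to the brackets of the conjecture), are the standard manifestations of the non-semisimple genus-one correction and of the denominators produced by the resummation.

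I expect the main obstacle to be exactly the finite generation of the $R$-matrix entries together with the resummation over fiber degrees $d_1$: one must show that the infinite sums assembling the edge propagators and vertex factors collapse onto rational functions in the five generators. This is harder than in the single-$\PP^m$ setting for two reasons. First, the richer K3/quartic Hodge structure means the recursion does not close on $L, A_1', X$ alone, and proving that $E_1', B_2'$ suffice requires a careful reduction of the Picard--Fuchs recursion modulo the enlarged generating set. Second, the base $\PP^1$ introduces a second family of edges and vertices carrying $q_2$, and one must check that these couple to the $\PP^3$-direction data only through coefficients already lying in $\CC[[q_2]]$, so that finite generation in the fiber variable $q_1$ is preserved. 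Once these two points are established, the remaining steps follow formally from the quantization formula and the $\mathsf{D}$-closure of the ring.
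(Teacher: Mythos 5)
Your plan follows essentially the same route as the paper: torus localization over decorated graphs (Proposition \ref{DeCo}), Birkhoff factorization of the big $\mathds{I}$-function to control the edge/$\mathds{S}$-operator data (Proposition \ref{L422}), and the Picard--Fuchs analysis of the asymptotic expansion placing the $R$-matrix entries $\mathds{R}_{n,ij}$ in $\CC[[q_2]][L_{ij}^{\pm 1}]$ (Lemma \ref{L421}), with the extra generators $E_1',B_2'$ forced by the rank-four quartic system exactly as you predict. The only step you leave implicit is the final symmetrization argument from Section \ref{Mainpf} (via the oscillatory-integral description of the $\mathds{R}_{n,ij}$, uniform in $(i,j)$), which shows the graph sum is a symmetric function of the eigenvalues $L_{ij}$ and thereby lets one pass from the individual $L_{ij}$ to the single generator $L^4$.
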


Using the argument in the proof of the theorem in Section \ref{MS4}, one can show that the coefficient of $q_2^d$ in $\mathcal{F}_{g}^{\mathsf{SQ}}$ do not have extra series $E_1'$ and $B_2'$ for fixed $d$.

For example in the genus 1 case, we have
$$q_1\frac{\partial}{\partial q_1}\mathcal{F}_1^{\mathsf{SQ}}(q_1,0)=\frac{13}{12}(1-L^4)+2X\,.$$
From this observation, we conjecture the stronger result.

\begin{Conj}
  For the $((4;2),(0;0))$-twisted genus g quasimap series of $\PP^3 \times \PP^1$, we have
 \begin{itemize}
    \item[(i)] $q_1\frac{\partial}{\partial q_1}
      \mathcal{F}_{1}^{\mathsf{SQ}}(q_1,q_2)\in\CC[[q_2]][L^4,A_1',X]\,,$
  \item[(ii)] $
      \mathcal{F}_{g}^{\mathsf{SQ}}(q_1,q_2)\in\CC[[q_2]][L^4,A_1',X]\,\,\,\text{for}\,\,g\ge2\,.$
  \end{itemize}
\end{Conj}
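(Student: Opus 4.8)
By Theorem \ref{MT4} together with the remark immediately following it — which removes the auxiliary generators $E_1'$ and $B_2'$ coefficient by coefficient in $q_2$ — every coefficient of $q_2^d$ in $\mathcal{F}_g^{\mathsf{SQ}}$ already lies in the ring of rational functions $\CC(L^4,A_1',X)$. The only remaining content of the conjecture is therefore \emph{polynomiality}: that these rational functions are in fact elements of the polynomial ring $\CC[L^4,A_1',X]$, with no residual denominators. The plan is to extract this from the localization computation behind Theorem \ref{MT4} by a careful analysis of pole structure.

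I would keep the same engine used for Theorem \ref{MT4}: torus-equivariant virtual localization on $\overline{Q}_g(\PP^3\times\PP^1,(d_1,d_2))$ for the twist $e(R\pi_*\mathsf{S}_1^{-4}\otimes\mathsf{S}_2^{-2})$, reorganized through the Givental--Teleman quantization of the (localized, hence semisimple) cohomological field theory into a finite sum over decorated graphs. In this expression the only denominators produced are powers of $(1+A_1')$, arising from the normalization $X=\mathsf{D}A_1'/(1+A_1')$ and from the edge propagators. To prove polynomiality I would bound, graph by graph, the order of the pole that each contribution can carry along the locus $\{1+A_1'=0\}$, and then show that the sum of the top-order residues over all genus-$g$ graphs vanishes, descending inductively to lower order. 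Two structural facts should drive the cancellation: first, that $\CC[L^4,A_1',X]$ is closed under $\mathsf{D}$ through $\mathsf{D}L=\tfrac{L}{4}(L^4-1)$, $\mathsf{D}A_1'=X(1+A_1')$ and the quadratic relation $X^2-2\mathsf{D}X+\tfrac{1}{16}(12L^8-11L^4-1)=0$, so that the differentiations appearing in the quantization formula cannot manufacture new poles; and second, that $\mathcal{F}_g^{\mathsf{SQ}}$ is an honest element of $\CC[[q_1,q_2]]$, which sharply constrains its admissible principal parts.

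The main obstacle is precisely this residue cancellation. Individual localization graphs genuinely carry poles in $(1+A_1')$, so polynomiality is invisible term by term and must follow from a global identity among the graph contributions that the localization formula does not exhibit directly. Pinning down that identity — equivalently, determining the exact principal-part behaviour of $\mathcal{F}_g^{\mathsf{SQ}}$ at the special value of $q_1$ where $1+A_1'$ vanishes and matching it against the polynomial combinations of $L^4,A_1',X$ — is the step I expect to require genuinely new input beyond the rational bound of Theorem \ref{MT4}. This is why the polynomial form is stated here only as a conjecture, in contrast with the weaker Theorem \ref{MT4} that is actually proved.
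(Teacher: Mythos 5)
This statement is one of the paper's \emph{conjectures}: the paper offers no proof of it. The only support given is Theorem \ref{MT4} (the strictly weaker assertion that the coefficients lie in the rational function field $\CC[[q_2]](L^4,A_1',X,E_1',B_2')$), the remark that the extra generators $E_1',B_2'$ drop out of each $q_2$-coefficient, and the single genus-one computation $q_1\frac{\partial}{\partial q_1}\mathcal{F}_1^{\mathsf{SQ}}(q_1,0)=\frac{13}{12}(1-L^4)+2X$. Your proposal is likewise not a proof: you outline a pole-cancellation strategy and then explicitly concede that the decisive step --- showing that the principal parts of the individual localization graph contributions cancel in the sum --- requires new input you do not supply. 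So you have correctly located where the difficulty lies and have not contradicted the paper, but nothing has been proved, and there is no proof in the paper to compare your route against.

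Two specific points in your outline are weaker than you present them. First, the claim that ``the only denominators produced are powers of $(1+A_1')$'' is not justified by anything established in the paper: Theorem \ref{MT4} only gives membership in the full rational function field, Proposition \ref{L422} and the appendix exhibit denominators in $(1+A_1')$ but the quoted relation among $A_1',E_1',L_{ij}$ carries a denominator polynomial in $L_{ij}$, the vertex contributions involve $\mathds{R}_{0,ij}^{-1}$, and Lemma \ref{L421} only places $\mathds{R}_{n,ij}$ in $\CC[[q_2]][L_{ij}^{\pm1}]$. A genuine proof would have to control all of these loci, not just $\{1+A_1'=0\}$. Second, the conjecture asserts membership in $\CC[[q_2]][L^4,A_1',X]$ with \emph{nonnegative} powers of $L^4$ (in contrast with the ring $\mathsf{QKF}=\CC[L^{\pm4},A_1',X]$), and your strategy says nothing about eliminating negative powers of $L$. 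Until the global cancellation identity is actually produced, the statement remains exactly what the paper calls it: a conjecture.
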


\subsection{Plan of the paper.}
 We will prove Theorem \ref{MT1}, \ref{MT2}, \ref{MT3} and \ref{MT4} in Section \ref{MS1}, \ref{MS2}, \ref{MS3} and \ref{MS4}, respectively.

\subsection{Acknowledgments} 
I am very grateful to  I.~Ciocan-Fontanine, S. ~Guo, H.~Iritani, F. ~Janda, B.~Kim, A.~Klemm, S. ~Lee, M. C.-C. Liu, G. ~Oberdieck, R. ~Pandharipande, Y. ~Ruan, E.~Scheidegger and J. ~Shen 
for discussions over the years 
about the moduli space of quasimaps and the invariants of Calabi-Yau geometries. I was supported by the grant ERC-2012-AdG-320368-MCSK and
ERC-2017-AdG-786580-MACI.

This project has received funding from the European Research Council (ERC) under the European Union’s
Horizon 2020 research and innovation program (grant agreement No 786580).

\
\

\section{Local $\PP^1\times\PP^1$}\label{MS1}

\subsection{Overview.} Let $K_{\PP^1\times\PP^1}$ be the total space of the canonical bundle over $\PP^1\times\PP^1$. The $((0;0),(-2;-2))$-twisted theory on $\PP^1\times\PP^1$ recovers the standard theory of local $\PP^1\times\PP^1$. Since the quasimap invariants are independent of $\lambda_{i,k}$, we are free to use the specialization 
\begin{align}\label{P1P1sp}
\lambda_{1,1}=1\,,\,\lambda_{1,2}=-1\,,\,\lambda_{2,1}=\sqrt{-1}\,,\,\lambda_{2,2}=-\sqrt{-1}\,.
\end{align}
The specialization \eqref{P1P1sp} will be imposed for our entire study of $K_{\PP^1\times\PP^1}$. For simplicity we use following notations;
\begin{align*}
    \alpha_0=\lambda_{1,1}\,,\,\alpha_1=\lambda_{1,2}\,,\,\beta_0=\lambda_{2,1}\,,\,\beta_1=\lambda_{2,2}\,.
\end{align*}

\subsection{Geneartors.}\label{Gen}
From the small $I$-function associated to $K_{\PP^1\times\PP^1}$,
\begin{align*}
    I(q_1,q_2)=\sum_{d_1,d_2\ge 0}q_1^{d_1}q_2^{d_2}
    \frac{\prod_{k=0}^{2d_1+2d_2-1}(-2H_1-2H_2-kz)}{\prod_{i=0}^1\prod_{k=1}^{d_1}(H_1-\alpha_i+kz)\prod_{j=0}^1\prod_{k=1}^{d_2}(H_1-\beta_j+kz)}\,,
\end{align*}
define the series $I_{ij}$ by the equation

\begin{align*}
    I=&1+\Big(I_{11}H_1+I_{12}H_2\Big)\frac{1}{z}+\Big(I_{21}H_1^2+I_{22}H_1 H_2+I_{23}H_2^2+I_{24}H_1+I_{25}H_2\Big)\frac{1}{z^2}\\&+\Big(I_{31}H_1^3+I_{32}H_1^2 H_2+I_{33}H_1 H_2^2+I_{34}H_2^3+I_{35}H_1^2+I_{36}H_1 H_2+I_{37}H_2^2\\
    &+I_{38}H_1+I_{39}H_2\Big)\frac{1}{z^3}+\mathcal{O}(\frac{1}{z^4})\,.
\end{align*}

From the Birkhoff factorization which will appear in Section \ref{IBF}, it is natural to define the following series.
\begin{align*}
    J_{11}&=\frac{I_{11}+I_{11}I_{12}^{\bullet}+I_{21}'+I_{12}^{\bullet}I_{21}'-I_{12}'I_{21}^{\bullet}}{1+I_{11}'-I_{11}^{\bullet}I_{12}'+I_{21}^{\bullet}+I_{11}'I_{12}^{\bullet}}\,,\\
   J_{12}&=\frac{-I_{11}I_{12}'+(1+I_{12}^{\bullet})(I_{12}+I_{22}')-I_{12}'I_{22}^{\bullet}}{1+I_{11}'-I_{11}^{\bullet}I_{12}'+I_{12}^{\bullet}+I_{11}'I_{12}^{\bullet}}\,, \\
   J_{13}&=\frac{-I_{12}I_{12}'+I_{23}'+I_{12}^{\bullet}I_{23}'-I_{12}'I_{23}^{\bullet}}{1+I_{11}'-I_{11}^{\bullet}I_{12}'+I_{12}^{\bullet}+I_{11}'I_{12}^{\bullet}}\,, \\
   J_{14}&=\frac{I_{24}'+I_{12}^{\bullet}I_{24}'-I_{12}'I_{24}^{\bullet}}{1+I_{11}'-I_{11}^{\bullet}I_{12}'+I_{12}^{\bullet}+I_{11}'I_{12}^{\bullet}}\,,\\
   J_{15}&=\frac{I_{25}' + I_{12}^{\bullet} I_{25}' - I_{12}' I_{25}^{\bullet}}{1+I_{11}'-I_{11}^{\bullet}I_{12}'+I_{12}^{\bullet}+I_{11}'I_{12}^{\bullet}}\,,
   \end{align*}
   \begin{align*}
   J_{21}&=\frac{I_{21} + I_{12}^{\bullet} I_{21} + I_{31}' + I_{12}^{\bullet} I_{31}' - I_{12}' I_{31}^{\bullet}}{1+I_{11}'-I_{11}^{\bullet}I_{12}'+I_{12}^{\bullet}+I_{11}'I_{12}^{\bullet}}\,,\\
   J_{22}&=\frac{(1 + I_{12}^{\bullet}) (I_{22} + I_{32}') - I_{12}' (I_{21} + I_{32}^{\bullet})}{1+I_{11}'-I_{11}^{\bullet}I_{12}'+I_{12}^{\bullet}+I_{11}'I_{12}^{\bullet}}\,,\\
   J_{23}&=\frac{(1 + I_{12}^{\bullet}) (I_{23} + I_{33}') - I_{12}' (I_{22} + I_{33}^{\bullet})}{1+I_{11}'-I_{11}^{\bullet}I_{12}'+I_{12}^{\bullet}+I_{11}'I_{12}^{\bullet}}\,,\\
   J_{24}&=\frac{(1 + I_{12}^{\bullet}) I_{34}' - I_{12}' (I_{23} + I_{34}^{\bullet})}{1+I_{11}'-I_{11}^{\bullet}I_{12}'+I_{12}^{\bullet}+I_{11}'I_{12}^{\bullet}}\,,\\
   J_{25}&=\frac{I_{24} + I_{12}^{\bullet} I_{24} + I_{35}' + I_{12}^{\bullet} I_{35}' - I_{12}' I_{35}^{\bullet}}{1+I_{11}'-I_{11}^{\bullet}I_{12}'+I_{12}^{\bullet}+I_{11}'I_{12}^{\bullet}}\,,\\
   J_{26}&=\frac{(1 + I_{12}^{\bullet}) (I_{25} + I_{36}') - I_{12}' (I_{24} + I_{36}^{\bullet})}{1+I_{11}'-I_{11}^{\bullet}I_{12}'+I_{12}^{\bullet}+I_{11}'I_{12}^{\bullet}}\,,\\
   J_{27}&=\frac{(1 + I_{12}^{\bullet}) I_{37}' - I_{12}' (I_{25} + I_{37}^{\bullet})}{1+I_{11}'-I_{11}^{\bullet}I_{12}'+I_{12}^{\bullet}+I_{11}'I_{12}^{\bullet}}\,,\\
   J_{28}&=\frac{I_{38}' + I_{12}^{\bullet} I_{38}' - I_{12}' I_{38}^{\bullet}}{1+I_{11}'-I_{11}^{\bullet}I_{12}'+I_{12}^{\bullet}+I_{11}'I_{12}^{\bullet}}\,,\\
   J_{29}&=\frac{I_{39}' + I_{12}^{\bullet} I_{39}' - I_{12}' I_{39}^{\bullet}}{1+I_{11}'-I_{11}^{\bullet}I_{12}'+I_{12}^{\bullet}+I_{11}'I_{12}^{\bullet}}
\end{align*}
Here, upper subscript $'$ and $^{\bullet}$ mean the action of differential operators $q_1\frac{\partial}{\partial q_1}$ and $q_2\frac{\partial}{\partial q_2}$ respectively.
Define the series $\mathds{J}_{1i}$ by the equations
\begin{align*}
    \mathds{J}_{11}=J_{12}\,,\,\,\,\mathds{J}_{12}=J_{14}\,,\,\,\,\mathds{J}_{13}=J_{15}+ s_1 J_{13}\,,\,\,\,\mathds{J}_{14}=J_{11}-s_2 J_{13}\,.
\end{align*}
Here, $s_1=\beta_0+\beta_1$ and $s_2=\beta_0 \beta_1$. Define the series $K_{ij}$ by the equations
\begin{align*}
    K_{11}&=\frac{-I_{11} I_{11}^{\bullet} - I_{11}^{\bullet} I_{21}' + I_{21}^{\bullet} + I_{11}' I_{21}^{\bullet}}{1+I_{11}'-I_{11}^{\bullet}I_{12}'+I_{12}^{\bullet}+I_{11}'I_{12}^{\bullet}}\,,\\
    K_{12}&=\frac{I_{11} (1 + I_{11}') - I_{11}^{\bullet} (I_{12} + I_{22}') + (1 + I_{11}') I_{22}^{\bullet}}{1+I_{11}'-I_{11}^{\bullet}I_{12}'+I_{12}^{\bullet}+I_{11}'I_{12}^{\bullet}}\,,\\
    K_{13}&=\frac{I_{12} + I_{11}' I_{12} - I_{11}^{\bullet} I_{23}' + I_{23}^{\bullet} + I_{11}' I_{23}^{\bullet}}{1+I_{11}'-I_{11}^{\bullet}I_{12}'+I_{12}^{\bullet}+I_{11}'I_{12}^{\bullet}}\,,\\
    K_{14}&=\frac{-I_{11}^{\bullet} I_{24}' + I_{24}^{\bullet} + I_{11}' I_{24}^{\bullet}}{1+I_{11}'-I_{11}^{\bullet}I_{12}'+I_{12}^{\bullet}+I_{11}'I_{12}^{\bullet}}\,,\\
    K_{15}&=\frac{-I_{11}^{\bullet} I_{25}' + I_{25}^{\bullet} + I_{11}' I_{25}^{\bullet}}{1+I_{11}'-I_{11}^{\bullet}I_{12}'+I_{12}^{\bullet}+I_{11}'I_{12}^{\bullet}}\,,
    \end{align*}
    \begin{align*}
    K_{21}&=\frac{-I_{11}^{\bullet} (I_{21} + I_{31}') + (1 + I_{11}') I_{31}^{\bullet}}{1+I_{11}'-I_{11}^{\bullet}I_{12}'+I_{12}^{\bullet}+I_{11}'I_{12}^{\bullet}}\,,\\
    K_{22}&=\frac{(1 + I_{11}') I_{21} - I_{11}^{\bullet} (I_{22} + I_{32}') + (1 + I_{11}') I_{32}^{\bullet}}{1+I_{11}'-I_{11}^{\bullet}I_{12}'+I_{12}^{\bullet}+I_{11}'I_{12}^{\bullet}}\,,\\
    K_{23}&=\frac{(1 + I_{11}') I_{22} - I_{11}^{\bullet} (I_{23} + I_{33}') + (1 + I_{11}') I_{33}^{\bullet}}{1+I_{11}'-I_{11}^{\bullet}I_{12}'+I_{12}^{\bullet}+I_{11}'I_{12}^{\bullet}}\,,\\
    K_{24}&=\frac{I_{23} + I_{11}' I_{23} - I_{11}^{\bullet} I_{34}' + I_{34}^{\bullet} + I_{11}' I_{34}^{\bullet}}{1+I_{11}'-I_{11}^{\bullet}I_{12}'+I_{12}^{\bullet}+I_{11}'I_{12}^{\bullet}}\,,\\
    K_{25}&=\frac{-I_{11}^{\bullet} (I_{24} + I_{35}') + (1 + I_{11}') I_{35}^{\bullet}}{1+I_{11}'-I_{11}^{\bullet}I_{12}'+I_{12}^{\bullet}+I_{11}'I_{12}^{\bullet}}\,,\\
    K_{26}&=\frac{(1 + I_{11}') I_{24} - I_{11}^{\bullet} (I_{25} + I_{36}') + (1 + I_{11}') I_{36}^{\bullet}}{1+I_{11}'-I_{11}^{\bullet}I_{12}'+I_{12}^{\bullet}+I_{11}'I_{12}^{\bullet}}\,,\\
    K_{27}&=\frac{I_{25} + I_{11}' I_{25} - I_{11}^{\bullet} I_{37}' + I_{37}^{\bullet} + I_{11}' I_{37}^{\bullet}}{1+I_{11}'-I_{11}^{\bullet}I_{12}'+I_{12}^{\bullet}+I_{11}'I_{12}^{\bullet}}\,,\\
    K_{28}&=\frac{-I_{11}^{\bullet} I_{38}' + I_{38}^{\bullet} + I_{11}' I_{38}^{\bullet}}{1+I_{11}'-I_{11}^{\bullet}I_{12}'+I_{12}^{\bullet}+I_{11}'I_{12}^{\bullet}}\,,\\
    K_{29}&=\frac{-I_{11}^{\bullet} I_{39}' + I_{39}^{\bullet} + I_{11}' I_{39}^{\bullet}}{1+I_{11}'-I_{11}^{\bullet}I_{12}'+I_{12}^{\bullet}+I_{11}'I_{12}^{\bullet}}
\end{align*}
Similarly define the series $\mathds{K}_{1i}$ by the equations
\begin{align*}
    \mathds{K}_{11}=K_{12}\,,\,\,\,\mathds{K}_{12}=K_{14}\,,\,\,\,\mathds{K}_{13}=K_{15}+s_1K_{13}\,,\,\,\,\mathds{K}_{14}=K_{11}-s_2K_{13}\,.
\end{align*}
Finally define the series $\mathds{M}_{1i}$ by the equations
\begin{align*}
    \mathds{M}_{11}&=\frac{J_{26}^{\bullet} + \mathds{J}_{12} - \mathds{J}_{11} \mathds{J}_{12}^{\bullet} - \mathds{J}_{13}^{\bullet} \mathds{K}_{11} + (J_{23}^{\bullet} + \mathds{J}_{11}) s_1}{1+\mathds{J}_{11}^{\bullet}}\,,\\
    \mathds{M}_{12}&=\frac{J_{21}^{\bullet} + J_{28}^{\bullet} - \mathds{J}_{12} \mathds{J}_{12}^{\bullet} - I_{11} \mathds{J}_{14}^{\bullet} - \mathds{J}_{13}^{\bullet} \mathds{K}_{12} - (J_{23}^{\bullet} + \mathds{J}_{11}) s_2}{1+\mathds{J}_{11}^{\bullet}}\,,\\
    \mathds{M}_{13}&=\frac{J_{22}^{\bullet} + J_{29}^{\bullet} - \mathds{J}_{12}^{\bullet} \mathds{J}_{13} + \mathds{J}_{14} - I_{12} \mathds{J}_{14}^{\bullet} - 
 \mathds{J}_{13}^{\bullet} \mathds{K}_{13} + (J_{27}^{\bullet} + \mathds{J}_{13}) s_1 + J_{24}^{\bullet} (s_1^2 - s_2)}{1+\mathds{J}_{11}^{\bullet}}\,,\\
 \mathds{M}_{14}&=\frac{J_{25}^{\bullet} - \mathds{J}_{12}^{\bullet} \mathds{J}_{14} - \mathds{J}_{13}^{\bullet} \mathds{K}_{14} - (J_{27}^{\bullet} + \mathds{J}_{13}) s_2 - J_{24}^{\bullet} s_1 s_2}{1+\mathds{J}_{11}^{\bullet}}
\end{align*}
Under the specialization \eqref{P1P1sp}, it is easy to check the following results.
\begin{align*}
    \mathds{J}_{12}=\mathds{J}_{13}=\mathds{K}_{12}=\mathds{K}_{13}=\mathds{M}_{11}=\mathds{M}_{12}=0\,.
\end{align*}

In the next section, we will find the relations between the series defined above.

\subsection{Basic correlators}

\subsubsection{Light markings.} Moduli of quasimaps can be considered with $n$ ordinary (weight 1) markings and k light (weight 0+) markings,
$$\overline{Q}_{g,n|k}^{0+,0+}(\PP^1\times\PP^1,(d_1,d_2))\,.$$
See \cite{BigI} for more explanations.
For $\gamma_i\in H^*_{\mathsf{T}}(\PP^1\times\PP^1)$ and $\delta_j\in H^*_\mathsf{T}([(\CC^2\times\CC^2)/(\CC^*\times\CC^*)])$, we define series for the $K_{\PP^1\times\PP^1}$ geometry,

\begin{multline*}
    \Big\lan \gamma _1\psi  ^{a_1} , \ldots, \gamma _n\psi  ^{a_n} ;  \delta _1, \ldots, \delta _k \Big\ran _{g, n|k, (d_1,d_2)}^{0+, 0+}  = \\
\int _{[\overline{Q}^{0+, 0+}_{g, n|k} (\PP^1\times\PP^1, (d_1,d_2))]^{\vir}} 
e(\text{Obs})\cdot
\prod _{i=1}^n \text{ev}_i^*(\gamma _i)\psi _i ^{a_i} 
\cdot \prod _{j=1}^k \widehat{\text{ev}}_j ^* (\delta _j)\, , 
\end{multline*}
\begin{multline*}
\Big \langle \Big\langle \gamma _1\psi  ^{a_1} , \ldots, \gamma _n\psi  ^{a_n} \Big\rangle\Big\rangle _{0, n}^{0+, 0+} 
= \\ \sum _{d\geq 0}\sum_{k\geq 0} \frac{q_1^{d_1}q_2^{d_2}}{k!}
 \Big\lan    \gamma _1\psi  ^{a_1} , \ldots, \gamma _n\psi  ^{a_n} ; {t}, \ldots, {t}  
 \Big\ran_{0, n|k, (d_1,d_2)}^{0+, 0+} \, ,
 \end{multline*}
 where, in the second series,
 ${t} \in H_{\T}^* ([(\CC^2\times\CC^2)/(\com^*\times\CC^*) ])$.

For each $\mathsf{T}$-fixed point $p_{ij}\in\PP^1\times\PP^1$, let
$$e_{ij}=e(T_{p_{ij}}(\PP^1\times\PP^1))\cdot(-2\alpha_i-2\beta_j)$$
be the equivariant Euler class of the tangent space of $K_{\PP^1\times\PP^1}$ at $p_{ij}$. Let
\begin{align*}
    \phi_{ij}=\frac{(-2\alpha_i-2\beta_j)(H_1-\alpha_{i+1})(H_2-\beta_{j+1})}{e_{ij}}\,,\,\,\phi^{ij}=e_{ij}\phi_{ij}\in H^*_{\mathsf{T}}(\PP^1\times\PP^1)
\end{align*}
be the cohomology classes. The following series will play an important role.
\begin{align*}
    \mathds{S}_{ij}(\gamma):=e_{ij}\lann\frac{\phi_{ij}}{z-\psi},\gamma \rann^{0+,0+}_{0,2}
\end{align*}
 We also write $$\mathds{S}(\gamma):=\sum_{i=0}^1\sum_{j=0}^1 {\phi_{ij}} \mathds{S}_{ij}(\gamma)\, .$$

\subsubsection{I-function and Birkhoff factorization.}\label{IBF}
Via the geometry of weighted quasimap graph space
\begin{equation*}
     \mathsf{QG}_{g, n|k, (d_1,d_2) }^{0+,0+} ([(\CC^2\times\CC^2)/(\com^*\times\CC^*)] ) \, 
\end{equation*}
the big $I$-function is defined in \cite{BigI}. See also \cite[Section 3.4]{LP} for brief introduction.

The $\mathds{I}$-function can be evaluated explicitly using the arguments in \cite[Section 5]{BigI}.

\begin{Prop}
For $\mathsf{t}=t_1H_1+t_2 H_2\in H^*_{\mathsf{T}}([(\CC^2\times\CC^2)/(\CC^*\times\CC^*)],\QQ)$,
\begin{multline}
    \mathds{I}(\mathsf{t})=\sum_{d_1,d_2\ge 0}q_1^{d_1}q_2^{d_2}e^{t_1(H_1+d_1z)/z+t_2(H_2+d_2z)/z}\\
    \cdot\frac{\prod_{k=0}^{2d_1+2d_2-1}(-2H_1-2H_2-kz)}{\prod_{i=0}^1\prod_{k=1}^{d_1}(H_1-\alpha_i+kz)\prod_{j=0}^1\prod_{k=1}^{d_2}(H_1-\beta_j+kz)}\,.
\end{multline}
\end{Prop}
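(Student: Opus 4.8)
The plan is to obtain the formula by $\CC^*$-virtual localization on the quasimap graph space, following the general evaluation of the big $I$-function in \cite[Section 5]{BigI} and specializing it to the toric data $V=\CC^2\times\CC^2$, $G=\CC^*\times\CC^*$ with the given linearization and the $(-2;-2)$-twist. Recall that $\mathds{I}(\mathsf{t})$ is extracted from the weighted graph space $\mathsf{QG}^{0+,0+}_{0,0|k,(d_1,d_2)}([(\CC^2\times\CC^2)/(\com^*\times\CC^*)])$, which carries the extra $\CC^*$-action rescaling the parametrizing $\PP^1$. Applying virtual localization with respect to this action, the coefficient of $q_1^{d_1}q_2^{d_2}$ is the contribution of the distinguished fixed locus on which the domain is the graph $\PP^1$ itself, the whole quasimap degree $(d_1,d_2)$ is concentrated as a length-$(d_1,d_2)$ base point at $0\in\PP^1$, and all $k$ light markings (each carrying $\mathsf{t}=t_1H_1+t_2H_2$) lie at $0$. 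Since $G$ is abelian, this locus is a single toric point of the quotient, so no residual integral over a positive-dimensional moduli space survives and the contribution is a pure ratio of equivariant Euler classes.

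The second step is to compute the virtual normal bundle of this fixed locus factor by factor. The deformations of the map in the two homogeneous coordinate directions of each projective factor, i.e.\ the pieces of $R\pi_*$ of the universal bundles $\mathsf{S}_i^{-1}=f^*\mathcal{O}_{\PP^{1}}(1)$ weighted by the coordinate characters, produce the denominator $\prod_{i=0}^1\prod_{k=1}^{d_1}(H_1-\alpha_i+kz)\prod_{j=0}^1\prod_{k=1}^{d_2}(H_2-\beta_j+kz)$, where the shift $+kz$ records the $\CC^*$-weight on the $k$-th jet at $0$. The twist by the canonical bundle contributes through the equivariant Euler class of $R\pi_*(\mathsf{S}_1^{2}\otimes\mathsf{S}_2^{2})=R\pi_*f^*\mathcal{O}(-2,-2)$; restricting to the fixed point and reading off the $\CC^*$-weights gives the numerator $\prod_{k=0}^{2d_1+2d_2-1}(-2H_1-2H_2-kz)$, the class $-2H_1-2H_2$ being the equivariant first Chern class of $\mathcal{O}(-2,-2)=K_{\PP^1\times\PP^1}$.

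The third step handles the light markings. Summing over $k\ge 0$ with the weight $\tfrac{1}{k!}$ and inserting $\mathsf{t}$ at each of the $k$ light points sitting at $0$, where the cotangent line has $\CC^*$-weight $z$, yields $\sum_{k\ge 0}\tfrac{1}{k!}\big((t_1H_1+t_2H_2)/z\big)^k=e^{(t_1H_1+t_2H_2)/z}$; the degree-dependent $\CC^*$-weight of the restriction of $\mathsf{S}_i$ to $0$ shifts $H_i\mapsto H_i+d_iz$, producing exactly the stated factor $e^{t_1(H_1+d_1z)/z+t_2(H_2+d_2z)/z}$. Assembling the numerator, denominator, and exponential and summing over $(d_1,d_2)\ge 0$ gives the claimed expression for $\mathds{I}(\mathsf{t})$.

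I expect the main obstacle to be the bookkeeping in the second step: correctly identifying the fixed-point restrictions and all $\CC^*$-weights of the section-deformation and twist-obstruction contributions, so that the signs and the $+kz$ shifts emerge precisely as written, and so that the range $k=0,\dots,2d_1+2d_2-1$ of the numerator is produced with the right multiplicities. Once these weight computations are pinned down, the remainder is a direct specialization of the general formula of \cite[Section 5]{BigI}, and the light-marking sum in the third step is routine.
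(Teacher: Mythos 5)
Your proposal is correct and follows essentially the same route as the paper, which itself simply invokes the $\CC^*$-localization computation on the weighted quasimap graph space from \cite[Section 5]{BigI}; your three steps (distinguished fixed locus with the degree concentrated at $0\in\PP^1$, weight bookkeeping for the section deformations and the twist, and the light-marking exponential) are exactly that argument specialized to $K_{\PP^1\times\PP^1}$. One minor point: your denominator correctly carries $H_2-\beta_j+kz$, whereas the displayed statement has $H_1-\beta_j+kz$, which is a typo in the paper rather than a discrepancy in your argument.
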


Using Birkhoff factorization, an evaluation of the series $\mathds{S}(H_1^iH_2^j)$ can be obtained from $\mathds{I}$-function, see \cite{KL}:

\begin{align}\label{EofS}
    \nonumber\mathds{S}(1)=&\mathds{I}\,,\\
    \mathds{S}(H_1)=&E_{11}\cdot z\frac{\partial}{\partial t_1}\mathds{S}(1)+E_{12}\cdot z\frac{\partial}{\partial t_2}\mathds{S}(1)\,,\\
    \nonumber\mathds{S}(H_2)=&E_{21}\cdot z\frac{\partial}{\partial t_1}\mathds{S}(1)+E_{22}\cdot z\frac{\partial}{\partial t_2}\mathds{S}(1)\,,\\
    \nonumber\mathds{S}(H_1 H_2)=&E_{31}\cdot  z\frac{\partial}{\partial t_1}\mathds{S}(H_2)+E_{32}\cdot \mathds{S}(1)\,.
\end{align}
Here, $E_{ij}$ are the series defined by
\begin{align*}
    E_{11}=&\frac{1+I_{12}^{\bullet}}{1+I_{11}'-I_{11}^{\bullet}I_{12}'+I_{12}^{\bullet}+I_{11}'I_{12}^{\bullet}}\,,\\
    E_{12}=&\frac{-I_{12}'}{1+I_{11}'-I_{11}^{\bullet}I_{12}'+I_{12}^{\bullet}+I_{11}'I_{12}^{\bullet}}\,,\\
    E_{21}=&\frac{-I_{11}^{\bullet}}{1+I_{11}'-I_{11}^{\bullet}I_{12}'+I_{12}^{\bullet}+I_{11}'I_{12}^{\bullet}}\,,\\
    E_{22}=&\frac{1+I_{11}'}{1+I_{11}'-I_{11}^{\bullet}I_{12}'+I_{12}^{\bullet}+I_{11}'I_{12}^{\bullet}}\,,\\
    E_{31}=&\frac{1}{1+\mathds{K}_{11}'}\,,\\
    E_{32}=&\frac{-\mathds{K}_{14}'}{1+\mathds{K}_{11}'}\,.
\end{align*}

\subsubsection{Picard-Fuchs equations and asymptotic expansion.}

The function $\mathds{I}$ satisfies the following Picard-Fuchs equations.

\begin{align*}
    &\Big(\Big(z\frac{d}{dt_1}\Big)^2-1-q_1\Big(2\Big(z\frac{d}{dt_1}\Big)+2\Big(z\frac{d}{dt_2}\Big)\Big)\Big(2\Big(z\frac{d}{dt_1}\Big)+2\Big(z\frac{d}{dt_2}\Big)+z\Big)\Big)\mathds{I}=0\,,\\
    &\Big(\Big(z\frac{d}{dt_2}\Big)^2+1-q_2\Big(2\Big(z\frac{d}{dt_1}\Big)+2\Big(z\frac{d}{dt_2}\Big)\Big)\Big(2\Big(z\frac{d}{dt_1}\Big)+2\Big(z\frac{d}{dt_2}\Big)+z\Big)\Big)\mathds{I}=0
\end{align*}

Define small $I$-function
$$\mathds{I}(q_1,q_2)\in  H^{*}_{\mathds{T}}(\PP^1\times\PP^1,\QQ)[[q_1,q_2]]$$

by the restriction 

$$\overline{\mathds{I}}(q_1,q_2)=\mathds{I}(q_1,q_2,t_1,t_2)|_{t_1=t_2=0}$$

Define differential operators
$$\mathsf{D}_1=q_1 \frac{d}{dq_1}\,,\,\,\mathsf{D}_2=q_2\frac{d}{dq_2}\,\,,\,M_1=H_1+z\mathsf{D}_1\,,\,\,M_2=H_2+z\mathsf{D}_2\,.$$

The small $I$-function satisfies following Picard-Fuchs equations.
\begin{align}\label{PFE}
    &\Big(M_1^2-1-q_1(2M_1+2M_2)(2M_1+2M_2+z)\Big)\overline{\mathds{I}}=0\,,\\ 
    \nonumber&\Big(M_2^2+1-q_1(2M_1+2M_2)(2M_1+2M_2+z)\Big)\overline{\mathds{I}}=0\,.
\end{align}

The restriction $\overline{\mathds{I}}|_{H_1=\alpha_i,H_2=\beta_j}$ admits following asymptotic form
\begin{align}\label{ASY}
    \overline{\mathds{I}}|_{H_1=\alpha_i,H_2=\beta_j}=e^{\mathds{U}_{ij}/z}\Big(\mathds{R}_{0,ij}+\mathds{R}_{1,ij}z+\mathds{R}_{2,ij}z^2+\dots\Big)
\end{align}
with series $\mathds{U}_{ij},\,\mathds{R}_{k,ij}\in \CC[[q_1,q_2]]$.
Define series $\mathds{L}_{ij}$ and $\mathds{UD}_{ij}$ for $0\le i\le 1,\,0\le j \le 1$ by
\begin{align*}
    \mathds{L}_{ij}=\alpha_i+q_1\frac{d}{dq_1}\mathds{U}_{ij}\,,\,\,\mathds{UD}_{ij}=\beta_j+q_2\frac{d}{dq_2}\mathds{U}_{ij}\,.
\end{align*}

Let $L_{ij}$ be the series in $q_1$ defined by constant term with respect to $q_2$. The series $L_{ij}$ is found by solving differential equations obtained from the coefficient of $z^k$. 
\begin{align}\label{L}
L_{ij}(q_1)=\frac{4 \beta_j q_1+\alpha_i\sqrt{1+4(-1+\beta_j^2)q_1}}{1-4q_1}\,.
\end{align}
Define the series $u_{k,ij}$ and $R_{nk,ij}$ by the following equations.
\begin{align}\label{smse}
    \mathds{U}_{ij}&:=u_{0,ij}+u_{1,ij} q_2+u_{2,ij} q_2^2+\dots\,,\\
    \mathds{R}_{n,ij}&:=R_{n0,ij}+R_{n1,ij}q_2+R_{n2,ij}q_2^2+\dots\,.
\end{align}
Denote by $\mathsf{G}_{ij}$ the subring of $\CC[[q_1]]$ generated by $L_{ij}$ and  $\frac{1}{\sqrt{1+\beta_j L_{ij}}}$\,

$$\mathsf{G}_{ij}=\CC[L_{ij},\frac{1}{\sqrt{1+\beta_j L_{ij}}}]\subset \CC[[q_1]]\,.$$
By analyzing the differential equations from the coefficient of $z^k$, we obtain the following results.

\begin{Lemma}\label{FG1}
We have
\begin{align*}
    \mathds{L}_{ij}\,,\,\,\mathds{UD}_{ij}\,,\,\,\mathds{R}_{n,ij}\in \mathsf{G}_{ij}[[q_2]]\,.
\end{align*}
\end{Lemma}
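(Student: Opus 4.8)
The plan is to feed the asymptotic form \eqref{ASY} into the Picard--Fuchs equations \eqref{PFE}. Writing $\overline{\mathds{I}}|_{H_1=\alpha_i,H_2=\beta_j}=e^{\mathds{U}_{ij}/z}\mathds{R}$ with $\mathds{R}=\sum_{n\ge 0}\mathds{R}_{n,ij}z^n$ and conjugating $M_1=H_1+z\mathsf{D}_1$, $M_2=H_2+z\mathsf{D}_2$ by $e^{\mathds{U}_{ij}/z}$, the definitions $\mathds{L}_{ij}=\alpha_i+\mathsf{D}_1\mathds{U}_{ij}$, $\mathds{UD}_{ij}=\beta_j+\mathsf{D}_2\mathds{U}_{ij}$ give
\begin{gather*}
e^{-\mathds{U}_{ij}/z}M_1\,e^{\mathds{U}_{ij}/z}=\mathds{L}_{ij}+z\mathsf{D}_1=:\widetilde{M}_1,\\
e^{-\mathds{U}_{ij}/z}M_2\,e^{\mathds{U}_{ij}/z}=\mathds{UD}_{ij}+z\mathsf{D}_2=:\widetilde{M}_2,
\end{gather*}
so that, restricted at $p_{ij}$, \eqref{PFE} becomes
\begin{gather*}
\big(\widetilde{M}_1^2-1-q_1(2\widetilde{M}_1+2\widetilde{M}_2)(2\widetilde{M}_1+2\widetilde{M}_2+z)\big)\mathds{R}=0,\\
\big(\widetilde{M}_2^2+1-q_2(2\widetilde{M}_1+2\widetilde{M}_2)(2\widetilde{M}_1+2\widetilde{M}_2+z)\big)\mathds{R}=0.
\end{gather*}
Writing $W:=\mathds{L}_{ij}+\mathds{UD}_{ij}$, the coefficient of $z^0$ yields the algebraic relations
\begin{equation}
\mathds{L}_{ij}^2-1=4q_1W^2,\qquad \mathds{UD}_{ij}^2+1=4q_2W^2. \tag{$\star$}
\end{equation}

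First I would place $\mathds{L}_{ij},\mathds{UD}_{ij}\in\mathsf{G}_{ij}[[q_2]]$ by induction on the $q_2$-order. At $q_2=0$ the second relation forces $\mathds{UD}_{ij}|_{q_2=0}=\beta_j$ and the first reduces to $L_{ij}^2-1=4q_1(L_{ij}+\beta_j)^2$, solved by \eqref{L}; this is the genuine base case (the one-variable $\PP^1$ geometry), for which $\mathds{R}_{n,ij}|_{q_2=0}\in\mathsf{G}_{ij}$ is the statement established in \cite{LP}. The second relation in $(\star)$ carries the \emph{explicit} factor $q_2$, so its coefficient of $q_2^k$ reads $2\beta_j[q_2^k]\mathds{UD}_{ij}=(\text{data of }q_2\text{-order}<k)$, and since $\beta_j\neq0$ this determines $[q_2^k]\mathds{UD}_{ij}\in\mathsf{G}_{ij}$. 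For $\mathds{L}_{ij}$ the first relation contains $q_1$, not $q_2$, hence is not directly triangular; instead I apply $\mathsf{D}_2$ to it and eliminate $q_1$ with $(\star)$ itself, obtaining
\[
\mathsf{D}_2\mathds{L}_{ij}=\frac{\mathds{L}_{ij}^2-1}{1+\mathds{L}_{ij}\mathds{UD}_{ij}}\,\mathsf{D}_2\mathds{UD}_{ij}.
\]
Since $\mathsf{D}_2\mathds{UD}_{ij}=O(q_2)$ and $1+\mathds{L}_{ij}\mathds{UD}_{ij}$ is invertible in $\mathsf{G}_{ij}[[q_2]]$ (its $q_2$-constant term $1+\beta_jL_{ij}$ is a unit of $\mathsf{G}_{ij}$), this recursion is again triangular in the $q_2$-order and keeps $[q_2^k]\mathds{L}_{ij}\in\mathsf{G}_{ij}$. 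A preliminary point is that $\mathsf{G}_{ij}$ is closed under $\mathsf{D}_1$: from $(\star)|_{q_2=0}$ one gets $\mathsf{D}_1L_{ij}=(L_{ij}^2-1)(L_{ij}+\beta_j)/(2(1+\beta_jL_{ij}))\in\mathsf{G}_{ij}$, and therefore $\mathsf{D}_1$ of the generator $(1+\beta_jL_{ij})^{-1/2}$ lies in $\mathsf{G}_{ij}$ as well, using $(1+\beta_jL_{ij})^{-1}=\big((1+\beta_jL_{ij})^{-1/2}\big)^2$.

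With $\mathds{L}_{ij},\mathds{UD}_{ij},W\in\mathsf{G}_{ij}[[q_2]]$ in hand, I would treat the $\mathds{R}_{n,ij}$ via the coefficient of $z^{n+1}$ in the \emph{second} conjugated Picard--Fuchs equation. The crucial feature is that the undifferentiated $\mathds{R}_{n+1}$-terms cancel by $\mathds{UD}_{ij}^2+1-4q_2W^2=0$, leaving
\[
2\mathds{UD}_{ij}\,\mathsf{D}_2\mathds{R}_{n,ij}=8q_2\,W(\mathsf{D}_1+\mathsf{D}_2)\mathds{R}_{n,ij}-(\mathsf{D}_2\mathds{UD}_{ij})\,\mathds{R}_{n,ij}-\mathsf{D}_2^2\mathds{R}_{n-1,ij}+(\text{explicit }q_2\text{ terms}).
\]
All coefficients here lie in $\mathsf{G}_{ij}[[q_2]]$ with \emph{no bare} $q_1$, every $q_1$ entering only through $\mathds{L}_{ij},\mathds{UD}_{ij},W$; and each term on the right carries either an explicit $q_2$, or the factor $\mathsf{D}_2\mathds{UD}_{ij}=O(q_2)$, or $\mathds{R}_{n-1,ij}$. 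Extracting $[q_2^k]$ then gives $2\beta_jk\,[q_2^k]\mathds{R}_{n,ij}$ equal to an element of $\mathsf{G}_{ij}$ assembled from strictly lower $q_2$-order (outer induction) and lower $z$-order (inner induction), using the $\mathsf{D}_1$-closure of $\mathsf{G}_{ij}$; the base $k=0$ is the cited one-variable result and the base $n=0$ uses $\mathds{R}_{-1,ij}=0$. This closes a double induction, first on $q_2$-order and then on $z$-order.

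The step I expect to be the main obstacle is controlling the bare factor $q_1$, which is \emph{not} in $\mathsf{G}_{ij}$: after $L_{ij}+\beta_j=\beta_j(1-\beta_jL_{ij})$ one finds $4q_1=(L_{ij}^2-1)/(L_{ij}+\beta_j)^2$ has a pole at the value $L_{ij}=-\beta_j$, whereas elements of $\mathsf{G}_{ij}$ can be singular only at $L_{ij}=\beta_j$. The whole argument hinges on never letting $q_1$ appear alone: in the $\mathds{L}_{ij}$-recursion it is removed by substituting $4q_1W^2=\mathds{L}_{ij}^2-1$, and for the $\mathds{R}_{n,ij}$ one must use the second (the $q_2$-) Picard--Fuchs equation precisely because its operator prefactor is $q_2$, so that every remaining $q_1$ is already packaged inside $\mathds{L}_{ij},\mathds{UD}_{ij}\in\mathsf{G}_{ij}[[q_2]]$. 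Verifying the base case $\mathds{R}_{n,ij}|_{q_2=0}\in\mathsf{G}_{ij}$ and checking that the cancellation of the undifferentiated $\mathds{R}$-terms occurs at every $z$-order are the remaining points that require care.
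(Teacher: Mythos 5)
Your argument is correct and is essentially the paper's own proof: substitute the asymptotic form \eqref{ASY} into the Picard--Fuchs equations, extract the order-$z$ relation $1+\mathds{UD}^2-4q_2(\mathds{L}+\mathds{UD})^2=0$, and run a triangular induction on the $q_2$-order (the coefficient of $q_2^k$ being linear in the new unknown with invertible coefficient $2k\beta_j=2k\sqrt{-1}$), then repeat at order $z^{n+1}$ for the $\mathds{R}_{n,ij}$, using the $\mathsf{D}_1$-closure of $\mathsf{G}_{ij}$ coming from the formula for $\mathsf{D}_1L_{ij}$. Your explicit handling of the bare $q_1$ (eliminating it via $4q_1W^2=\mathds{L}_{ij}^2-1$) and of the $\mathds{L}_{ij}$-recursion only spells out steps the paper leaves implicit.
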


\begin{proof}
To simplify the notations, we will only prove the lemma for the case $(i,j)=(0,0)$ and omit the index $(i,j)$ for the series $\mathds{L}_{ij},\,\mathds{UD}_{ij},\,\mathds{R}_{k,ij}$. The proof for other $(i,j)$ follows from the same argument.
If we apply the equations \eqref{PFE} to the asymptotic form \eqref{ASY}, the coefficients of $z$ in each equation yield the following equation,
\begin{align}
    1 + \mathds{UD}^2 - 4 q_2 (\mathds{L} + \mathds{UD})^2=0\,.
\end{align}
By applying \eqref{smse} to above equation, we obtain the equation $\text{Eq}_k$ from the coefficient of $q_2^k$. Each term of equation $\text{Eq}_k$ is a monomial of $(\mathsf{D}_1)^l L$, $(\mathsf{D}_1)^lu_m$ for $l\ge 0$. Furthermore one can easily check that the equations $\text{Eq}_k$ are linear in $u_k$ with coefficient $2k\sqrt{-1}$ and has no $(\mathsf{D}_1)^lu_m$ term for $m \ge k+1$.
The statements of the lemma for $\mathds{L}$ and $\mathds{UD}$ follow from following equation which is easy to check from \eqref{L},
\begin{align*}
    \mathsf{D}_1 L=\frac{(\sqrt{-1} + L) (-1 + L^2)}{2 (1 + \sqrt{-1} L)}\,.
\end{align*}

The same argument for the coefficient of $z^{n+1}$ in \eqref{PFE} gives the proof of the statement of the lemma for $\mathds{R}_n$.

\end{proof}

\subsubsection{Relations on generators.}\label{RoG1}
We prove some relations on the generators defined in Section \ref{Gen}.

\begin{Prop}\label{MG}
The series 
\begin{multline*}
\mathds{J}_{11}',\,\mathds{J}_{11}^{\bullet},\,\mathds{J}_{14}',\,\mathds{J}_{14}^{\bullet},\,\mathds{K}_{11}',\,\mathds{K}_{11}^{\bullet},\,
\mathds{K}_{14}',\,\mathds{K}_{14}^{\bullet},\,\mathds{M}_{12}',\,\mathds{M}_{12}^{\bullet},\,\mathds{M}_{13}',\,\mathds{M}_{13}^{\bullet}
\end{multline*}
are representable as rational functions in $I_{11}',\,I_{11}^{\bullet},\,\mathds{L},\,\mathds{UD}$. 
\end{Prop}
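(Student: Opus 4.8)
The plan is to read Proposition \ref{MG} as a reduction statement. By the definitions in Section \ref{Gen}, every one of the twelve series is already a rational expression in the $1/z$-coefficients $I_{ij}$ and their $\mathsf{D}_1=q_1\partial_{q_1}$, $\mathsf{D}_2=q_2\partial_{q_2}$ derivatives, so it suffices to express all of the $I_{ij}$-data that actually occurs in terms of the four generators $I_{11}',I_{11}^{\bullet},\mathds{L},\mathds{UD}$. I would carry this out in two layers: a purely Picard--Fuchs reduction that eliminates every coefficient $I_{ij}$ of cohomological degree $\ge 2$ (the $I_{2\ast}$ and $I_{3\ast}$ series), leaving rational functions of $I_{11}',I_{11}^{\bullet},I_{12}',I_{12}^{\bullet}$; and a matching step that trades the surviving $I_{12}$-data for the asymptotic series $\mathds{L},\mathds{UD}$.

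For the first layer, substitute the $1/z$-expansion of $\overline{\mathds{I}}$ into the two Picard--Fuchs equations \eqref{PFE} and read off the coefficient of each cohomology monomial $H_1^aH_2^b$ at each order in $z$, reducing $H_1^2,H_2^2$ by the equivariant relations on $\PP^1\times\PP^1$. This gives a triangular system that solves recursively for $I_{21},\dots,I_{25},I_{31},\dots,I_{39}$ and their derivatives in terms of $I_{11},I_{12}$. Substituting into the definitions of $J_{ij},K_{ij}$ and then of $\mathds{J}_{1i},\mathds{K}_{1i},\mathds{M}_{1i}$ rewrites all twelve series as rational functions of $I_{11}',I_{11}^{\bullet},I_{12}',I_{12}^{\bullet}$, with $s_1,s_2,\alpha_i,\beta_j$ fixed by \eqref{P1P1sp}. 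Two structural facts organize the computation: the common denominator $\Delta=1+I_{11}'-I_{11}^{\bullet}I_{12}'+I_{12}^{\bullet}+I_{11}'I_{12}^{\bullet}$ is exactly the Wronskian-type determinant $(1+I_{11}')(1+I_{12}^{\bullet})-I_{12}'I_{11}^{\bullet}$ formed from the $z^0$-terms of $M_1\overline{\mathds{I}}$ and $M_2\overline{\mathds{I}}$, so it never vanishes identically; and the bootstrap order matters, since the $\mathds{J}$- and $\mathds{K}$-series come directly from the $\mathds{I}$-Birkhoff factorization while the $\mathds{M}$-series depend on them through $E_{31},E_{32}$. I would also feed in the specialization vanishings $\mathds{J}_{12}=\mathds{J}_{13}=\mathds{K}_{12}=\mathds{K}_{13}=\mathds{M}_{11}=\mathds{M}_{12}=0$, which both simplify the $\mathds{M}_{1i}$ expressions and dispose of $\mathds{M}_{12}',\mathds{M}_{12}^{\bullet}$ at once.

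The second layer is the substantive one, and it is where $\mathds{L},\mathds{UD}$ enter. Applying $M_1,M_2$ to the fixed-point restriction $\overline{\mathds{I}}|_{H_1=\alpha_i,H_2=\beta_j}$ and using the asymptotic form \eqref{ASY} shows that, to leading order as $z\to0$, $M_1$ and $M_2$ act by multiplication by $\mathds{L}_{ij}=\alpha_i+\mathsf{D}_1\mathds{U}_{ij}$ and $\mathds{UD}_{ij}=\beta_j+\mathsf{D}_2\mathds{U}_{ij}$; on the other side, the $z^0$-terms of $M_1\overline{\mathds{I}}$ and $M_2\overline{\mathds{I}}$ in the $1/z$-expansion are $(1+I_{11}')H_1+I_{12}'H_2$ and $I_{11}^{\bullet}H_1+(1+I_{12}^{\bullet})H_2$. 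Comparing these two descriptions of the same fundamental solution, and using the leading Picard--Fuchs relations $\mathds{L}_{ij}^2-1=4q_1(\mathds{L}_{ij}+\mathds{UD}_{ij})^2$ and $\mathds{UD}_{ij}^2+1=4q_2(\mathds{L}_{ij}+\mathds{UD}_{ij})^2$ together with Lemma \ref{FG1}, produces rational expressions for $I_{12}',I_{12}^{\bullet}$ in terms of $I_{11}',I_{11}^{\bullet},\mathds{L}_{ij},\mathds{UD}_{ij}$. Substituting these back into the output of the first layer completes the proof.

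The hard part will be this second layer. The difficulty is that the $1/z$-expansion defining the $I_{ij}$ is the expansion of $\overline{\mathds{I}}$ at $z=\infty$, whereas the asymptotic form \eqref{ASY} controlling $\mathds{L},\mathds{UD}$ lives at $z=0$, so the identification of the $I_{12}$-data with $\mathds{L},\mathds{UD}$ cannot be a termwise comparison of coefficients; it has to be extracted as a differential-algebraic identity valid as a power series in $q_1,q_2$, and one must check that the specialization \eqref{P1P1sp} keeps the resulting rational functions inside $\CC(I_{11}',I_{11}^{\bullet},\mathds{L},\mathds{UD})$ rather than requiring further $I_{12}$-data. Everything else is a finite, if lengthy, sequence of substitutions.
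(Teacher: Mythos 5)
Your first layer is broadly consistent with how the generators are set up (everything is by construction rational in the $I_{ij}$-data, and the Picard--Fuchs expansion does produce recursions among the coefficients), but your second layer --- the only place where $\mathds{L}$ and $\mathds{UD}$ enter --- has a genuine gap, and it is exactly the step you flag as ``the hard part'' without resolving. You propose to identify the $I_{12}$-data with $\mathds{L},\mathds{UD}$ by comparing ``the $z^0$-terms of $M_1\overline{\mathds{I}}$ and $M_2\overline{\mathds{I}}$ in the $1/z$-expansion'' with the leading behaviour of $M_1,M_2$ on the asymptotic form \eqref{ASY}. These are expansions at $z=\infty$ and at $z=0$ respectively; their leading coefficients are not the same object and cannot be equated termwise, so as written this step does not produce the claimed rational expressions for $I_{12}',I_{12}^{\bullet}$. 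Acknowledging that the identity must be ``extracted as a differential-algebraic identity'' does not supply the mechanism.

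The mechanism the paper uses is the eigenvalue property of the quantum multiplication matrices, and it is precisely the missing bridge. The Birkhoff factorization \eqref{EofS} packages the operators $H_1+q_1\partial_{q_1}$ and $H_2+q_2\partial_{q_2}$ as $4\times 4$ matrices in the basis $\{1,H_1,H_2,H_1H_2\}$ whose entries are (rational combinations of) the primed and bulleted $\mathds{J}$-, $\mathds{K}$-, $\mathds{M}$-series; the four fixed-point restrictions with asymptotic form \eqref{ASY} are simultaneous eigenvectors to leading order in $z$, with eigenvalues $\mathds{L}_{ij}$ and $\mathds{UD}_{ij}$. Equating symmetric functions of the eigenvalues with the characteristic-polynomial data of these matrices, together with the relations from commutativity of the quantum product and from $[\,q_1\partial_{q_1},q_2\partial_{q_2}\,]=0$ applied to the $\mathds{S}$-operators, is what yields a solvable system for all twelve series in terms of $I_{11}',I_{11}^{\bullet},\mathds{L},\mathds{UD}$. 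Your proposal never states or establishes this eigenvalue identification, and without it there is no legitimate route from the $z=\infty$ Birkhoff data to the $z=0$ quantities $\mathds{L},\mathds{UD}$. (A secondary issue: the Picard--Fuchs expansion at order $z^{-k}$ constrains \emph{second} derivatives of the level-$(k+2)$ coefficients, so your claim that it ``solves recursively for $I_{21},\dots,I_{39}$ and their derivatives'' in closed form overstates what that computation directly yields; but this could likely be repaired, whereas the second layer cannot be as stated.)
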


\begin{proof}

First consider the quantum product by 
\begin{align*}
    (H_1+q_1\frac{\partial}{\partial q_1})\,\,,(H_2+q_2\frac{\partial}{\partial q_2}).
\end{align*}
In the basis $\{1,H_1,H_2,H_1 H_2\}$, they can be considered as matrix multiplications and the eigenvalues of these matrices are $\mathds{L}_{ij}$ and $\mathds{UD}_{ij}$, respectively.

From the commutativity of quantum product, we also get some relations.
Finally, using the fact that $q_1\frac{\partial}{\partial q_1}$ and $q_2\frac{\partial}{\partial q_2}$ commute, we get some relations by repeatedly applying $q_1\frac{\partial}{\partial q_1}$ and $q_2\frac{\partial}{\partial q_2}$ to $\mathds{S}$-operators.

By the relations we get from above, we can find following explicit equations. Since the result are independent of the index $(i,j)$, we will omit the index $(i,j)$ from the series $\mathds{L}_{ij}$ and $\mathds{UD}_{ij}$.
\begin{align*}
    \mathds{J}_{11}'&=-\frac{(\mathds{L} + \mathds{UD})^2 (-1 + \mathds{L}^2 + I_{11}^{\bullet} (-1 + \mathds{L}^2) - I_{11}' \mathds{L}\cdot \mathds{UD})}   {
 2 (1 + I_{11}' + I_{11}^{\bullet})^2 (-1 + \mathds{L}^2 - \mathds{UD}^2)}\,,\\
 \end{align*}
 \begin{align*}
 \mathds{J}_{11}^{\bullet}&=\frac{1}{2 (1 + I_{11}' + I_{11}^{\bullet})^2 (-1 + \mathds{L}^2 - \mathds{UD}^2)}\Big(2 - 2 \mathds{L}^2 - \mathds{L}^3 \mathds{UD} \\&+ 2 \mathds{UD}^2 - 2 \mathds{L}^2 \mathds{UD}^2 - \mathds{L}\cdot \mathds{UD}^3 + 
 (I_{11}')^2 (2 - 2 \mathds{L}^2 + 2 \mathds{UD}^2) \\
 &+ (I_{11}^{\bullet})^2 (2 - 2 \mathds{L}^2 + 2 \mathds{UD}^2) - 
 I_{11}^{\bullet} (\mathds{L}^3 \mathds{UD} + \mathds{L} \cdot\mathds{UD}^3 \\&- 4 (1 + \mathds{UD}^2) + 2 \mathds{L}^2 (2 + \mathds{UD}^2)) + 
 I_{11}' (4 + 5 \mathds{UD}^2 + \mathds{UD}^4\\&
 + \mathds{L}^2 (-3 + \mathds{UD}^2) + 
    I_{11}^{\bullet} (4 - 4 \mathds{L}^2 + 4 \mathds{UD}^2) + 2 \mathds{L} (\mathds{UD} + \mathds{UD}^3))\Big)\,,\\
 \end{align*}
 \begin{align*}
 \mathds{J}_{14}'&=\frac{1}{2 (1 + I_{11}' + I_{11}^{\bullet})^2 (-1 + \mathds{L}^2 - \mathds{UD}^2)}\Big((1 + I_{11}^{\bullet}) (-1 + \mathds{L}^2) (-2 + \mathds{L}^2 \\
&- \mathds{UD}^2 + 2 I_{11}^{\bullet} (-1 + \mathds{L}^2 + \mathds{L}\cdot \mathds{UD})) + 
 I_{11}' (-3 \mathds{L}^3 \mathds{UD} + 4 (1 + \mathds{UD}^2) \\&- 2 \mathds{L}^2 (2 + \mathds{UD}^2) + 
    \mathds{L}\cdot \mathds{UD} (4 + \mathds{UD}^2) - 4 I_{11}^{\bullet} (-1 + \mathds{L}^2) (1 + \mathds{L}\cdot \mathds{UD} \\&+ \mathds{UD}^2)) + 
 2 (I_{11}')^2 (1 + \mathds{UD}^2 + \mathds{L}^2 (-1 + \mathds{UD}^2) + \mathds{L} (\mathds{UD} + \mathds{UD}^3))\Big)\,,\\
 \end{align*}
 \begin{align*}
 \mathds{J}_{14}^{\bullet}&=\frac{1}{2 (1 + I_{11}' + I_{11}^{\bullet})^2 (-1 + \mathds{L}^2 - \mathds{UD}^2)}\Big((1 + I_{11}^{\bullet}) \mathds{L} \mathds{UD} (-2 + \mathds{L}^2 \\&- \mathds{UD}^2 + 2 I_{11}^{\bullet} (-1 + \mathds{L}^2 + \mathds{L} \cdot\mathds{UD})) + 
 2 (I_{11}')^2 (\mathds{L}^2 + \mathds{UD}^2 + \mathds{UD}^4 \\&+ \mathds{L} (\mathds{UD} + \mathds{UD}^3)) + 
 I_{11}' (\mathds{UD}^2 + \mathds{UD}^4 + \mathds{L}^2 (1 - (3 + 4 I_{11}^{\bullet}) \mathds{UD}^2) \\&- 
    2 (1 + 2 I_{11}^{\bullet}) \mathds{L} (\mathds{UD} + \mathds{UD}^3))\Big)\,,\\
 \end{align*}
 \begin{align*}
 \mathds{K}_{11}'&=\frac{1}{2 (1 + I_{11}' + I_{11}^{\bullet})^2 (-1 + \mathds{L}^2 - \mathds{UD}^2)}\Big(2 - 2 \mathds{L}^2 - \mathds{L}^3 \mathds{UD} + 2 \mathds{UD}^2 \\&- 2 \mathds{L}^2 \mathds{UD}^2 - \mathds{L}\cdot \mathds{UD}^3 + 
 (I_{11}')^2 (2 - 2 \mathds{L}^2 + 2 \mathds{UD}^2) + (I_{11}^{\bullet})^2 (2 - 2 \mathds{L}^2 \\&+ 2 \mathds{UD}^2) + 
 I_{11}^{\bullet} (4 + \mathds{L}^4 - 2 \mathds{L} \cdot\mathds{UD} + 2 \mathds{L}^3 \mathds{UD} + 3 \mathds{UD}^2 + \mathds{L}^2 (-5 + \mathds{UD}^2)) \\&- 
 I_{11}' (\mathds{L}^3 \mathds{UD} + \mathds{L} \cdot\mathds{UD}^3 + 4 I_{11}^{\bullet} (-1 + \mathds{L}^2 - \mathds{UD}^2) - 4 (1 + \mathds{UD}^2) \\&+ 
    2 \mathds{L}^2 (2 + \mathds{UD}^2))\Big) \,,\\   
 \end{align*}
 \begin{align*}
 \mathds{K}_{11}^{\bullet}&=-\frac{1}{2 (1 + I_{11}' + I_{11}^{\bullet})^2 (-1 + \mathds{L}^2 - \mathds{UD}^2)}\Big((\mathds{L} + \mathds{UD})^2 (1 + I_{11}' \\&- I_{11}^{\bullet} \mathds{L}\cdot \mathds{UD} + \mathds{UD}^2 + I_{11}' \mathds{UD}^2)\Big)\,\\
 \end{align*}
 \begin{align*}
 \mathds{K}_{14}'&=\frac{1}{2 (1 + I_{11}' + I_{11}^{\bullet})^2 (-1 + \mathds{L}^2 - \mathds{UD}^2)}\Big(-2 (I_{11}^{\bullet})^2 (-\mathds{L}^2 + \mathds{L}^4 \\&- \mathds{L}\cdot \mathds{UD} + \mathds{L}^3 \mathds{UD} - \mathds{UD}^2) - (1 + I_{11}') \mathds{L}\cdot \mathds{UD} (2 - 
    \mathds{L}^2 + \mathds{UD}^2 \\&+ 2 I_{11}' (1 + \mathds{L}\cdot \mathds{UD} + \mathds{UD}^2)) + 
 I_{11}^{\bullet} (-\mathds{L}^4 + \mathds{UD}^2 - 2 \mathds{L} (\mathds{UD} \\&+ 2 I_{11}' \mathds{UD}) + 2 \mathds{L}^3 (\mathds{UD} + 2 I_{11}' \mathds{UD}) + 
    \mathds{L}^2 (1 + (3 + 4 I_{11}') \mathds{UD}^2))\Big)\,,\\
 \end{align*}
 \begin{align*}
 \mathds{K}_{14}^{\bullet}&=\frac{1}{2 (1 + I_{11}' + I_{11}^{\bullet})^2 (-1 + \mathds{L}^2 - \mathds{UD}^2)}\Big((-2 + \mathds{L}^2 - \mathds{UD}^2) (1 + \mathds{UD}^2) \\&- 2 (I_{11}')^2 (1 + \mathds{UD}^2) (1 + \mathds{L}\cdot \mathds{UD} + \mathds{UD}^2) +
  I_{11}' (1 + \mathds{UD}^2) (-4 + \mathds{L}^2\\& - 2 \mathds{L}\cdot \mathds{UD} - 3 \mathds{UD}^2 + 
    4 I_{11}^{\bullet} (-1 + \mathds{L}^2 + \mathds{L} \cdot\mathds{UD})) - 
 2 (I_{11}^{\bullet})^2 (1 - \mathds{L}\cdot \mathds{UD} \\&+ \mathds{L}^3 \mathds{UD} + \mathds{UD}^2 + \mathds{L}^2 (-1 + \mathds{UD}^2)) + 
 I_{11}^{\bullet} (-\mathds{L}^3 \mathds{UD} - 4 (1 + \mathds{UD}^2)\\& + 2 \mathds{L}^2 (2 + \mathds{UD}^2) + \mathds{L} \cdot\mathds{UD} (4 + 3 \mathds{UD}^2))\Big)  \,, 
\end{align*}
\begin{align*}
    \mathds{M}_{12}'&=-\frac{1}{(\mathds{L} + \mathds{UD})^2}\Big((1 + 3 I_{11}' + 2 (I_{11}')^2) \mathds{L}^2 + 
 2 (I_{11}' + (I_{11}')^2 \\&- 2 I_{11}' I_{11}^{\bullet} - I_{11}^{\bullet} (2 + I_{11}^{\bullet})) \mathds{L}\cdot \mathds{UD} + (1 + 
    3 I_{11}' + 2 (I_{11}')^2) \mathds{UD}^2\Big)\,,
\end{align*}
\begin{align*}
    \mathds{M}_{12}^{\bullet}&=\frac{1}{(\mathds{L} + \mathds{UD})^2}\Big((1 + I_{11}^{\bullet} + 2 (I_{11}^{\bullet})^2) \mathds{L}^2 - 
 2 ((I_{11}')^2 - (-1 + I_{11}^{\bullet}) I_{11}^{\bullet}\\& + 2 I_{11}' (1 + I_{11}^{\bullet})) \mathds{L}\cdot \mathds{UD} + (1 + I_{11}^{\bullet} + 
    2 (I_{11}^{\bullet})^2) \mathds{UD}^2\Big)\,,
\end{align*}
\begin{align*}
    \mathds{M}_{13}'&=-\frac{1}{(\mathds{L} + \mathds{UD})^2}\Big((1 + I_{11}' + 2 (I_{11}')^2) \mathds{L}^2 - 
 2 (I_{11}' - (I_{11}')^2 + 2 I_{11}' I_{11}^{\bullet} \\&+ I_{11}^{\bullet} (2 + I_{11}^{\bullet})) \mathds{L}\cdot \mathds{UD} + (1 + I_{11}' + 
    2 (I_{11}')^2) \mathds{UD}^2\Big)\,,
\end{align*}
\begin{align*}
    \mathds{M}_{13}^{\bullet}&=\frac{1}{(\mathds{L} + \mathds{UD})^2}\Big((1 + 3 I_{11}^{\bullet} + 2 (I_{11}^{\bullet})^2) L^2 - 
 2 ((I_{11}')^2 + 2 I_{11}' (1 + I_{11}^{\bullet})\\& - I_{11}^{\bullet} (1 + I_{11}^{\bullet})) \mathds{L}\cdot \mathds{UD} + (1 + 
    3 I_{11}^{\bullet} + 2 (I_{11}^{\bullet})^2) \mathds{UD}^2\Big)\,.
\end{align*}

\end{proof}

The proof of Proposition \ref{MG} yields more relations among which the following relation will be needed.

\begin{align}\label{Re}
    &(1 + I_{11}') \mathds{L} (\mathds{L} - \mathds{UD}) \mathds{UD} (\mathds{L} + \mathds{UD})^2 (1 - 4 I_{11}^{\bullet\bullet} \mathds{L}\cdot \mathds{UD} + \mathds{UD}^2 + 
    I_{11}' (1 + \mathds{UD}^2) \\\nonumber&+ 4 I_{11}'^{\bullet} (1 + \mathds{UD}^2)) - 
 (I_{11}^{\bullet})^2 (\mathds{UD}^3 + \mathds{UD}^5 + \mathds{L}^5 (1 + \mathds{UD}^2) - \mathds{L}^3 (1 + \mathds{UD}^2)^2 \\\nonumber&+ 
    \mathds{L}^4 (\mathds{UD} + \mathds{UD}^3) + \mathds{L} (\mathds{UD}^2 + \mathds{UD}^4) - \mathds{L}^2 \mathds{UD} (-3 + 6 \mathds{UD}^2 + \mathds{UD}^4)) \\\nonumber&+
  I_{11}^{\bullet} (4 I_{11}^{\bullet\bullet} \mathds{L}^6 \mathds{UD} - 
    4 (I_{11}'^{\bullet} + I_{11}^{\bullet\bullet}) \mathds{L}^4 \mathds{UD} (1 + \mathds{UD}^2) - (1 + I_{11}' + 
       4 I_{11}'^{\bullet}) \mathds{UD}^3 \\\nonumber&(1 + \mathds{UD}^2) + 
    \mathds{L}^3 (1 + \mathds{UD}^2) (1 + I_{11}' + 2 \mathds{UD}^2 + 2 I_{11}' \mathds{UD}^2 - 4 I_{11}^{\bullet\bullet} \mathds{UD}^2 \\\nonumber&+ 
       4 I_{11}'^{\bullet} (1 + \mathds{UD}^2)) - 
    \mathds{L}^5 (1 + \mathds{UD}^2 - 4 I_{11}^{\bullet\bullet} \mathds{UD}^2 + I_{11}' (1 + \mathds{UD}^2) \\\nonumber&+ 
       4 I_{11}'^{\bullet} (1 + \mathds{UD}^2)) - 
    \mathds{L}\cdot \mathds{UD}^2 (1 + 2 \mathds{UD}^2 - 4 I_{11}^{\bullet\bullet} \mathds{UD}^2 + \mathds{UD}^4 + 4 I_{11}'^{\bullet} (1 + \mathds{UD}^2) \\\nonumber&+ 
       I_{11}' (1 + \mathds{UD}^2)^2) + 
    \mathds{L}^2 \mathds{UD} (1 + 4 I_{11}^{\bullet\bullet} + \mathds{UD}^2 + I_{11}' (1 + \mathds{UD}^2) \\\nonumber&+ 
       4 I_{11}'^{\bullet} (1 + \mathds{UD}^2)^2))=0\,.
\end{align}
Here, we also omit the index $(i,j)$ from the series $\mathds{L}_{ij}$ and $\mathds{UD}_{ij}$.

Define series $a_k(q_1)$ by the following equation.
\begin{align}\label{Is}
    I_{11}=a_0(q_1)+a_1(q_1)q_2+a_2(q_1)q_2^2+\dots\,.
\end{align}
\noindent The series $a_0$ satisfies following equation.
\begin{align*}
    1+\mathsf{D}_1a_0=(1-4q)^{-1/2}\,.
\end{align*}
Define a new series 
\begin{align*}
    r_{ij}:=\sqrt{L_{ij}}\,.
\end{align*}
\noindent The series $r_{ij}$ satisfies following differential equation.
\begin{align}\label{Dr}
    \mathsf{D}_1r_{ij}=\frac{(\sqrt{-1} + r_{ij}^2) (-1 + r_{ij}^4)}{4 (r + \sqrt{-1} r_{ij}^3)}\,.
\end{align}
From \eqref{L}, we obtain the following equation.
\begin{align}\label{Da0}
    1+\mathsf{D}_1 a_0=\frac{(1+\sqrt{-1})+(1-\sqrt{-1})r_{ij}^2}{2r_{ij}}
\end{align}

\begin{Lemma}\label{FG2}
For $k\ge 1$, We have
\begin{align*}
    a_k \in r_{ij}\cdot\CC[r_{ij}^2,\,r_{ij}^{-2},\,,(1+\beta_j r_{ij}^2)^{-1}]\,.
\end{align*}
\end{Lemma}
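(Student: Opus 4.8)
The plan is to set up a recursion in $k$ for the series $a_k$ defined in \eqref{Is}, and then run an induction whose inductive step is powered by a closure property of the target ring under $\mathsf{D}_1$. For the recursion I would expand the Picard--Fuchs system \eqref{PFE} in powers of $q_2$. The second equation of \eqref{PFE} carries an explicit factor $q_2$, so reading off the coefficient of $q_2^k$ expresses the order-$k$ piece $P_{\bullet,k}$ of the $1/z$-expansion $\overline{\mathds{I}}=\sum_m z^{-m}P_m(H_1,H_2)$ algebraically (in $z$) in terms of $\mathsf{D}_1$- and $\mathsf{D}_1^2$-derivatives of the order-$(k-1)$ piece: indeed $\mathsf{D}_2$ acts as multiplication by $k$ on the $q_2^k$-coefficient, so $M_2=H_2+zk$ becomes the multiplication operator $H_2+zk$ and contributes no $q$-derivatives. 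Extracting the coefficient of $H_1z^{-1}$, using the equivariant cohomology relations of $\PP^1\times\PP^1$ and the specialization \eqref{P1P1sp}, then yields a formula $a_k=\Phi_k\bigl(a_{k-1};\ \text{companion order-}(k{-}1)\text{ coefficients};\ \mathsf{D}_1\bigr)$ with coefficients polynomial in $\alpha_i,\beta_j$ and $k$.

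The second ingredient is the closure statement. Writing
\[
\mathcal{R}_{ij}:=r_{ij}\cdot\CC[r_{ij}^2,r_{ij}^{-2},(1+\beta_j r_{ij}^2)^{-1}],\qquad
\mathcal{R}^{\mathrm{ev}}_{ij}:=\CC[r_{ij}^2,r_{ij}^{-2},(1+\beta_j r_{ij}^2)^{-1}],
\]
I would first rewrite \eqref{Dr} (and its evident $(i,j)$-analogue with $\beta_j$ in place of $\sqrt{-1}$) as $\mathsf{D}_1 r_{ij}=\tfrac14\,r_{ij}^{-1}(\beta_j+r_{ij}^2)(r_{ij}^4-1)(1+\beta_j r_{ij}^2)^{-1}$, which visibly lies in $\mathcal{R}_{ij}$. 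By the Leibniz and chain rules this forces both $\mathcal{R}_{ij}$ and $\mathcal{R}^{\mathrm{ev}}_{ij}$ to be stable under $\mathsf{D}_1$, and moreover $\mathsf{D}_1$ preserves the $\ZZ/2$-parity in $r_{ij}$ (even series go to even, odd to odd). This is exactly the feature that lets the recursion propagate ring-membership.

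With these in hand I would induct on $k$, carrying the joint hypothesis that every coefficient $P_{m,k}$ (for $k\ge1$), read through the fixed point $(i,j)$, lies in $\mathcal{R}_{ij}$ or $\mathcal{R}^{\mathrm{ev}}_{ij}$ according to a fixed parity rule in which the $H_1z^{-1}$-slot (hence $a_k$) is odd; the inductive step is then immediate from the recursion of the first paragraph together with the $\mathsf{D}_1$-stability and parity-preservation of the second. The base data come from \eqref{L}, \eqref{Da0} and Lemma \ref{FG1}: in particular $1+\mathsf{D}_1 a_0=\tfrac{(1+\sqrt{-1})+(1-\sqrt{-1})r_{ij}^2}{2r_{ij}}\in\mathcal{R}_{ij}$.

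I expect the main obstacle to be twofold, both localized at the bottom of the recursion. First, under \eqref{P1P1sp} one has $\beta_0^2+1=0$, so the leading operator $M_2^2+1$ degenerates and, exactly as in the proof of Lemma \ref{FG1} where the analogous equations are linear in the top unknown with coefficient $2k\sqrt{-1}$, the recursion can be solved for the order-$k$ datum only up to a factor proportional to $k$; this is precisely why the statement is restricted to $k\ge1$. Second, $a_0$ itself is \emph{not} in $\mathcal{R}_{ij}$ (it carries a nonzero constant term, visible from $\mathsf{D}_1 a_0=\tfrac{(1+\sqrt{-1})+(1-\sqrt{-1})r_{ij}^2}{2r_{ij}}-1$), so I must check that this constant cannot leak into $a_k$ for $k\ge1$: it is killed whenever $a_0$ enters through $\mathsf{D}_1$, and the remaining potential contributions must be shown to cancel. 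This cancellation, together with the parity bookkeeping across all companion coefficients, is the real technical heart of the argument and is most safely pinned down by an explicit computation of the $k=1$ case.
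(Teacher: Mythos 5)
Your strategy is sound, but it is not the route the paper takes. The paper does not touch the Picard--Fuchs system \eqref{PFE} at this point: it substitutes the expansions \eqref{ASY} and \eqref{Is} into the auxiliary identity \eqref{Re}, which was extracted from the commutativity/quantum-product relations in the proof of Proposition \ref{MG} and which involves only the derivatives $I_{11}'$, $I_{11}^{\bullet}$, $I_{11}'^{\bullet}$, $I_{11}^{\bullet\bullet}$ together with $\mathds{L}$ and $\mathds{UD}$. The coefficient of $q_2^k$ then gives a single scalar equation $\mathrm{Eq}_k$ that is linear in $a_k$ with the explicit nonzero coefficient $(2k)^2(1+\mathsf{D}_1 a_0)L_{ij}^2(-\sqrt{-1}+L_{ij})(\sqrt{-1}+L_{ij})^2$, contains no $a_m$ with $m>k$, and whose remaining ingredients are already controlled by Lemma \ref{FG1}; membership in the ring then follows from \eqref{Dr} and \eqref{Da0} exactly as in your closure argument. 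What \eqref{Re} buys is that the undifferentiated series $a_0$ never enters (only $q$-derivatives of $I_{11}$ occur there), and no companion coefficients $I_{12},I_{2j},\dots$ appear, so there is no joint induction or parity bookkeeping to carry. What your Picard--Fuchs route buys is independence from the computations behind Proposition \ref{MG}: the second equation of \eqref{PFE} alone determines the $q_2^k$-coefficient of $\overline{\mathds{I}}$ from the $q_2^{k-1}$-coefficient after inverting $kz(2H_2+kz)$ (using $H_2^2=-1$ under \eqref{P1P1sp}), which is where your factor of $k$ and the restriction to $k\ge 1$ come from, matching the paper's $(2k)^2$.

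On the point you flag as the ``real technical heart'': it does close, and more simply than you anticipate. Extracting the $z^{-1}$-coefficient of the inverted recursion, the $H_1$-slot gives $a_1=2+2\mathsf{D}_1a_0+4\mathsf{D}_1^2a_0=2(1+\mathsf{D}_1a_0)+4\mathsf{D}_1(1+\mathsf{D}_1a_0)$ and, for $k\ge 2$, $k^2a_k=(2\mathsf{D}_1+2k-2)(2\mathsf{D}_1+2k-1)\,a_{k-1}$. The undifferentiated $a_0$ drops out because $H_1\cdot(a_0H_1+I_{12,0}H_2)$ lands in the $\mathds{1}$- and $H_1H_2$-slots rather than the $H_1$-slot, the stray constant $2$ is absorbed into $2(1+\mathsf{D}_1a_0)$, which is odd in $r_{ij}$ by \eqref{Da0}, and the companion coefficient $I_{12,k-1}$ never feeds into the $H_1$-slot, so your induction runs on $a_k$ alone. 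Until that computation is actually written down, however, your argument remains a plan rather than a proof; the paper's use of \eqref{Re} makes the corresponding step immediate.
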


\begin{proof}
If we apply the forms \eqref{ASY} and \eqref{Is} to the equation \eqref{Re}, we get the equation $\text{Eq}_k$ from the coefficient of $q_2^k$. Each term of $\text{Eq}_k$ is monomial of $(\mathsf{D}_1)^lL$, $(\mathsf{D}_1)^la_m$, $(\mathsf{D}_1)^lu_m$. Furthermore $\text{Eq}_k$ is linear in $a_k$ with coefficient $$(2k)^2(1+\mathsf{D}_1a_0)L_{ij}^2(-\sqrt{-1}+L_{ij})(\sqrt{-1}+L_{ij})^2$$ and has no $(\mathsf{D}_1)^la_m$ with $m \ge k+1$. The statement of the proposition follows from \eqref{Dr} and \eqref{Da0}.
\end{proof}

\subsection{Higher genus series}

\subsubsection{Graphs.} Let $ g \ge 2$.
A {\em decorated graph} $\Gamma\in\mathsf{G}_g$ consist of the data $(\mathsf{V},\, \mathsf{E},\, \mathsf{g},\,\mathsf{p} )$ such that
\begin{itemize}
 \item[(i)] $\mathsf{V}$ is the vertex set,
 \item[(ii)] $\mathsf{E}$ is the edge set (possibly including self-edges),
 \item[(iii)] $\mathsf{g}:\mathsf{V}\rightarrow \ZZ_{\ge 0}$ is a genus assignment satisfying
 $$g=\sum_{v\in\mathsf{V}}\mathsf{g}(v)+h^1(\Gamma)$$ and for which $(\mathsf{V},\mathsf{E},\mathsf{g})$ is stable graph,
 \item[(iv)] $\mathsf{p}:\mathsf{V}\rightarrow (\PP^1\times\PP^1)^{\mathsf{T}}$ is an assignment of a $\mathsf{T}$-fixed point $\mathsf{p}(v)$ to each vertex $v\in\mathsf{V}$.
\end{itemize}

\subsubsection{Localization formula.}
We summarize the localization formula for the $K_{\PP^1\times\PP^1}$ quasimap theories.

We write the localization formula as
\begin{align*}
    \sum_{d_1,d_2\ge 0}[\overline{Q}_{g}(K_{\PP^1\times\PP^1},(d_1,d_2))]^{\text{vir}}q_1^{d_1}q_2^{q_2}=\sum_{\Gamma\in\mathsf{G}_g}\text{Cont}_{\Gamma}\,.
\end{align*}

\begin{Prop}\label{DeCo}
We have
\begin{align*}
    \text{\em Cont}_{\Gamma}=\frac{1}{|\text{\em Aut}(\Gamma)|}\sum_{\mathsf{A}\in\ZZ^{\mathsf{F}}_{>0}}\prod_{v\in\mathsf{V}}\text{\em Cont}^{\mathsf{A}}_{\Gamma}(v)\prod_{e\in\mathsf{E}}\text{\em Cont}^{\mathsf{A}}_{\Gamma}(e)\,,
\end{align*}
where the vertex and edge contributions with incident flag $\mathsf{A}$-values $(a_1,\dots,a_n)$ and $(b_1,b_2)$ respectively are
\begin{align*}
    \text{\em Cont}^{\mathsf{A}}_{\Gamma}(v)=\mathsf{P}\Big[\psi_1^{a_1-1},\dots,\psi_n^{a_n-1}|\mathsf{H}^{\mathsf{p}(v)}_{\mathsf{g}(v)}\Big]^{\mathsf{p}(v),0+}_{\mathsf{g}(v),n}\,,
\end{align*}
\begin{multline*}
    \text{\em Cont}^{\mathsf{A}}_{\Gamma}(e)=\\\Big[(-1)^{b_1+b_2}e^{-\frac{\mathds{U}_{\mathsf{p}_1}}{z}-\frac{\mathds{U}_{\mathsf{p}_2}}{y}}\sum_{0\le i\le1,\,0\le j\le1}\overline{\mathds{S}}_{\mathsf{p}_1}(\phi_{ij})|_{z=x}\overline{\mathds{S}}_{\mathsf{p}_2}(\phi^{ij})|_{z=y}\Big]_{x^{b_1}y^{b_2-1}-x^{b_1+1}y^{b_2-2}+\dots+(-1)^{b_1-1}x^{b_1+b_2-1}}\,.
\end{multline*}

\end{Prop}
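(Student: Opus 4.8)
The plan is to prove Proposition \ref{DeCo} by applying the virtual $\mathsf{T}$-localization theorem of Graber--Pandharipande, in the form adapted to moduli of stable quasimaps, to the $\mathsf{T}$-action on $\overline{Q}_g(K_{\PP^1\times\PP^1},(d_1,d_2))$ induced by the specialization \eqref{P1P1sp}. First I would classify the $\mathsf{T}$-fixed loci. Since the induced action on $\PP^1\times\PP^1$ has only the four isolated fixed points $p_{ij}$ and the four $\mathsf{T}$-invariant coordinate lines, every $\mathsf{T}$-fixed stable quasimap decomposes into maximal connected subcurves that are contracted to a single fixed point $\mathsf{p}(v)$ (carrying there all of their base points, which in quasimap theory contribute to the degree), together with non-contracted rational components mapping with positive degree onto a $\mathsf{T}$-invariant line joining two fixed points. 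This combinatorial data is exactly a decorated graph $\Gamma\in\mathsf{G}_g$: vertices record the fixed-point-concentrated pieces with their genus and label $\mathsf{p}(v)$, while edges record the connecting rational curves. Because $\mathsf{G}_g$ carries no degree decoration, all summation over $(d_1,d_2)$ is absorbed into the vertex and edge contributions, which are therefore power series in $q_1,q_2$; this matches the left-hand side $\sum_{d_1,d_2}[\,\cdot\,]q_1^{d_1}q_2^{d_2}$.

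Next I would invoke the localization formula
$$[\overline{Q}_g(K_{\PP^1\times\PP^1},(d_1,d_2))]^{\vir}=\sum_\Gamma\frac{1}{|\text{Aut}(\Gamma)|}(\iota_\Gamma)_*\frac{[\mathsf{F}_\Gamma]^{\vir}}{e(N^{\vir}_\Gamma)}\,,$$
and split the virtual normal bundle $N^{\vir}_\Gamma$ into pieces localized at the vertices, along the edges, and at the nodes (flags). The obstruction factor $e(\text{Obs})$, coming from $R\pi_*$ of the canonical twist, restricts compatibly with this splitting. The factor $1/|\text{Aut}(\Gamma)|$ records the automorphisms of the fixed locus, while the sum over $\mathsf{A}\in\ZZ^{\mathsf{F}}_{>0}$ arises from expanding the node-smoothing (cotangent-line) factors at each flag as geometric series in the corresponding equivariant weight: the exponent $a$ at a flag records the power $\psi^{a-1}$ surviving at the adjacent vertex. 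In this way the vertex pieces assemble into the bracket $\mathsf{P}[\psi_1^{a_1-1},\dots,\psi_n^{a_n-1}\,|\,\mathsf{H}^{\mathsf{p}(v)}_{\mathsf{g}(v)}]$, with the $\psi$-insertions coming from the smoothing parameters of the edge-nodes and the class $\mathsf{H}^{\mathsf{p}(v)}_{\mathsf{g}(v)}$ packaging the vertex obstruction and Hodge contributions at $\mathsf{p}(v)$.

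The heart of the argument, and the step I expect to be the main obstacle, is identifying the edge contribution $\text{Cont}^{\mathsf{A}}_\Gamma(e)$ with the stated closed form in the operators $\overline{\mathds{S}}_{\mathsf{p}}$. The raw edge localization produces an explicit rational function in the equivariant weights, obtained from the degree-$b$ cover of a $\mathsf{T}$-invariant line together with the smoothing of its two end-nodes; this must be re-expressed through the genus-zero two-pointed series. The key inputs are the asymptotic factorization \eqref{ASY} of $\overline{\mathds{I}}$ at the fixed points and the evaluation \eqref{EofS} of $\mathds{S}(H_1^iH_2^j)$ via Birkhoff factorization, which together let me recognize the edge weights as coefficients of $\overline{\mathds{S}}_{\mathsf{p}_1}(\phi_{ij})\,\overline{\mathds{S}}_{\mathsf{p}_2}(\phi^{ij})$ once the leading exponential factors $e^{-\mathds{U}_{\mathsf{p}_1}/z-\mathds{U}_{\mathsf{p}_2}/y}$ are stripped off. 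The power-series extraction $[\,\cdots\,]_{x^{b_1}y^{b_2-1}-x^{b_1+1}y^{b_2-2}+\cdots}$ then implements the node-smoothing propagator $1/(x+y)$ expanded geometrically, and the sign $(-1)^{b_1+b_2}$ tracks the normalization of the cotangent weights at the two ends.

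Carrying out this matching is the delicate bookkeeping that completes the proof: I would check that, after substituting the asymptotic form, each edge factor of $e(N^{\vir}_\Gamma)^{-1}$ is reproduced by the stated coefficient extraction, and that the completeness relation $\sum_{i,j}\phi_{ij}\otimes\phi^{ij}$ correctly glues the two $\mathds{S}$-operators across the edge. The remaining verifications—that the specialization \eqref{P1P1sp} and the twist $e(\text{Obs})$ enter the vertex/edge split as claimed, and that the flag exponents assemble into the global sum over $\mathsf{A}$—are routine once the edge identification is in place, so the entire weight of the argument rests on the $\overline{\mathds{S}}$-operator rewriting of the edge term.
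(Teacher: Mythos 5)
Your outline is the standard virtual-localization argument (fixed-locus classification by decorated graphs, splitting of $e(N^{\vir}_\Gamma)$, geometric-series expansion of the node-smoothing factors into the sum over $\mathsf{A}$, and re-expression of the raw edge terms through the asymptotics of $\overline{\mathds{I}}$ and the $\overline{\mathds{S}}$-operators), which is exactly the route the paper takes: it states Proposition \ref{DeCo} as a summary of the localization formula and defers the derivation and the precise vertex notation to \cite{LP}, where this same argument is carried out for $K_{\PP^2}$. So your proposal is correct and follows essentially the same approach; the only caveat is that the identification of the edge contribution with the two-point series $\overline{\mathds{S}}$ is not merely bookkeeping but rests on the fact that the genus-zero two-pointed quasimap invariants themselves localize to precisely these edge terms, as established in \cite{LP} and \cite{KL}.
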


For the precise definition of notations in the vertex contribution, see \cite{LP}. In this paper we do not need the exact definition of vertex contribution. We only need following results for vertex contribution
$$\text{Cont}^{\mathsf{A}}_{\Gamma}(v)\in \CC[\mathds{R}_{0,ij}^{-1},\mathds{R}_{1,ij},\mathds{R}_{2,ij},\dots]\,,$$
which follows from the definition.

The subscript in the edge contribution signifies a (signed) sum of the respective coefficients. 

\subsubsection{Proof of Theorem \ref{MT1}}\label{Mainpf}

In the proof of Lemma \ref{FG1}, we can actually check that $\frac{1}{\sqrt{1+\alpha_i L_{ij}}}$ only appears in $\mathds{R}_{k,ij}$. And we can easily check in the decomposition formula in Proposition \ref{DeCo} that the order of factor $\frac{1}{\sqrt{1+\alpha_i L_{ij}}}$ in $\mathcal{F}^{\mathsf{SQ}}_g$ is even. (Precisely, $2\mathsf{g}(v)-2$ at the vertex $v$.)
By Lemma \ref{FG1}, \eqref{EofS} and Lemma \ref{FG2}, Proposition \ref{DeCo} immediately yields

\begin{multline*}
    \mathcal{F}^{\mathsf{SQ}}_g\in\CC[[q_2]][r_{00},r_{00}^{-1},(1+\beta_{0} r_{01}^2)^{-1},r_{01},r_{01}^{-1},(1+\beta_{1} r_{01}^2)^{-1},\\r_{10},r_{10}^{-1},(1+\beta_{0} r_{10}^2)^{-1},r_{11},r_{11}^{-1},(1+\beta_{1} r_{11}^2)^{-1}]\,.
\end{multline*}
 
Now we need to show that the order of $r_{ij}$ in $\mathcal{F}^{\mathsf{SQ}}_g$ is always even. By \eqref{EofS} we can represent each edge contribution in the Proposition \ref{DeCo} formally as rational function in variables $\mathsf{D}_1^m\mathsf{D}_2^n I_{11}$, $\mathsf{D}_1^m\mathsf{D}_2^n \mathds{U}_{k,ij}$ and $\mathsf{D}_1^m\mathsf{D}_2^n \mathds{R}_{k,ij}$. By direct calculations we can check that the sum of order of the factors $\mathsf{D}_1^m\mathsf{D}_2^n I_{11}$ is always even. Therefore we conclude that order of $r_{ij}$ in $\mathcal{F}^{\mathsf{SQ}}_g$ in each edge contribution is always even from \eqref{Dr}, \eqref{Da0} and Lemma \ref{FG2}.

Now from the definitions of $r_{ij}$, we obtain 
\begin{align}\label{FGg}
\mathcal{F}^{\mathsf{SQ}}_g\in\CC[X,(\sqrt{-1}+X)^{-1},(\sqrt{-1}-X)^{-1}]\,.
\end{align}
Since all coefficients of $q_1^{k_1}q_2^{k_2}$ in $\mathcal{F}^{\mathsf{SQ}}_g$ are real numbers by definition, the statement of the Theorem \ref{MT1}
$$\mathcal{F}^{\mathsf{SQ}}_g\in\CC[X,(1+X^2)^{-1}]\,.$$
follows immediately from \eqref{FGg}.

To finish the proof, we need to show that 
$$\mathcal{F}^{\mathsf{SQ}}_g\in\CC[X^2,(1+X^2)^{-1}]\,.$$
For this, we need the mirror symmetry argument which was explained by Iritani.

Using the Givental's equivariant mirror $K_{\PP^1\times\PP^1}$,  the asymptotic form \eqref{ASY} of $\overline{\mathds{I}}|_{H_1=\alpha_i,H_2=\beta_j}$ can be calculated using the oscillatory integral associated to the mirror of $K_{\PP^1\times\PP^1}$. See \cite[Proposition 6.9]{CCIT2} for the precise statement. Using this oscillatory integral, it is easy to see that the presentation of $\mathds{R}_{n,ij}$ in terms of $L_{ij}$ do not depend on the choice of $(i,j)$. In other words, $\mathds{R}_{n,ij}$ in \eqref{ASY} have same polynomial expressions in terms of $L_{ij}$ for all $(i,j)$. For example, this was explained explicitly for the case of $K_{\PP^2}$ in \cite[Appendix A]{LP3}. The argument in \cite[Appendix A]{LP3} applies to the case of $K_{\PP^1\times\PP^1}$ to yield the similar result. From this observation and Proposition \ref{DeCo}, we conclude that $\mathcal{F}^{\mathsf{SQ}}_g$ is symmetric rational function in $L_{ij}$. Then the statement of the Theorem

$$\mathcal{F}^{\mathsf{SQ}}_g\in\CC[X^2,(1+X^2)^{-1}]\,$$
follows from the fact that $L_{0i}$ and $L_{1i}$ are the two roots of the equations for $i=0,1$,
$$L_i^2-\frac{(-1)^i\sqrt{-1} 8 q_1 }{1-4q_1}L_i-1=0\,.$$

\section{Elliptic fibration : Surface}\label{MS2}

\subsection{Overview.}
We study the $((3,2),(0,0))$-twisted theory on $\PP^2\times\PP^1$. This theory recover the standard theory of K3 surface X, defined by the general section of the anti-canonical bundle over $\PP^2\times\PP^1$ for genus zero and one. 

For the rest of the section, the specialization
\begin{align*}
    \lambda_{1,k}=e^{\frac{2\pi i k}{3}}\,,\,\lambda_{2,1}=1\,,\,\lambda_{2,2}=-1
\end{align*}
will be fixed.
Since the argument of the proof is parallel to that of Section \ref{MS1}, we mostly omit the proofs whose arguments appeared in Section \ref{MS1}. 

\subsection{Generators.}
From the small $I$-function associated to $((3;2),(0;0))$-twisted theory on $\PP^2\times\PP^1$,
\begin{align*}
    I(q_1,q_2)=\sum_{d_1,d_2\ge 0}q_1^{d_1}q_2^{d_2}
    \frac{\prod_{k=1}^{3d_1+2d_2}(3H_1+2H_2+kz)}{\prod_{i=0}^2\prod_{k=1}^{d_1}(H_1-\lambda_{1,i}+kz)\prod_{j=0}^1\prod_{k=1}^{d_2}(H_1-\lambda_{2,j}+kz)}\,,
\end{align*}
we get the big $\mathds{I}$-function using the argument in \cite[Section 5]{BigI}.

\begin{Prop}
For $\mathsf{t}=t_1H_1+t_2 H_2\in H^*_{\mathsf{T}}([(\CC^3\times\CC^2)/(\CC^*\times\CC^*)],\QQ)$,
\begin{multline}
    \mathds{I}(\mathsf{t})=\sum_{d_1,d_2\ge 0}q_1^{d_1}q_2^{d_2}e^{t_1(H_1+d_1z)/z+t_2(H_2+d_2z)/z}\\
    \cdot\frac{\prod_{k=1}^{3d_1+2d_2}(3H_1+2H_2+kz)}{\prod_{i=0}^2\prod_{k=1}^{d_1}(H_1-\lambda_{1,i}+kz)\prod_{j=0}^1\prod_{k=1}^{d_2}(H_1-\lambda_{2,j}+kz)}\,.
\end{multline}
\end{Prop}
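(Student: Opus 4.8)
The plan is to obtain $\mathds{I}$ by virtual torus localization on the weighted quasimap graph space, following verbatim the argument of \cite[Section 5]{BigI} and paralleling the computation already carried out for $K_{\PP^1\times\PP^1}$ in Section \ref{MS1}. Recall that the big $\mathds{I}$-function is extracted from the graph space
$$\mathsf{QG}_{0,0|k,(d_1,d_2)}^{0+,0+}([(\CC^3\times\CC^2)/(\CC^*\times\CC^*)])\,,$$
which carries an extra $\CC^*$-action scaling the parametrized $\PP^1$; let $z$ denote its equivariant parameter. First I would isolate the distinguished fixed loci that define $\mathds{I}$: those on which the parametrized $\PP^1$ is preserved, the entire quasimap degree $(d_1,d_2)$ is concentrated as a length-$(d_1,d_2)$ basepoint over $0\in\PP^1$, and the genus-$0$ source carries the $k$ light markings (each decorated by $\mathsf{t}=t_1H_1+t_2H_2$) together with the evaluation marking at $\infty$. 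These are the only loci whose contribution survives.

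Next I would apply the virtual localization formula and compute the three constituent factors separately. The moving part of the deformation-obstruction theory at the basepoint is governed by the $\CC^*$-weights on the sections of the line bundles attached to the homogeneous coordinates of $W=\CC^3\times\CC^2$; taking the Euler class of this moving part produces the denominator $\prod_{i=0}^2\prod_{k=1}^{d_1}(H_1-\lambda_{1,i}+kz)\prod_{j=0}^1\prod_{k=1}^{d_2}(H_2-\lambda_{2,j}+kz)$. The twisting insertion $e(R\pi_*\,\mathsf{S}_1^{-3}\otimes\mathsf{S}_2^{-2})$, whose first Chern class is $3H_1+2H_2$, restricts on the fixed locus to the numerator $\prod_{k=1}^{3d_1+2d_2}(3H_1+2H_2+kz)$, exactly matching the Calabi-Yau twist of multidegree $(3,2)$. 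Finally, the shift of the $\CC^*$-weight of $H_i$ at the basepoint by $d_iz$, combined with the summation over the number $k$ of light markings (the $1/k!$ factors exponentiating the $\mathsf{t}$-insertions), yields the prefactor $e^{t_1(H_1+d_1z)/z+t_2(H_2+d_2z)/z}$. Summing over all $(d_1,d_2)$ and $k$ then assembles the claimed closed form.

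I expect the only point requiring genuine care to be the bookkeeping of the equivariant $\CC^*$-weights, so that the degree shift lands correctly inside the exponential and the twisting factor is paired with precisely the right numerator; once the dictionary between the basepoint data and these three factors is fixed as in \cite[Section 5]{BigI}, the remaining verification is the same formal residue computation as in Section \ref{MS1}, with only the linearization weights $\lambda_{1,i},\lambda_{2,j}$ and the twist $(3,2)$ changed. Since the geometry here is a routine instance of the general twisted GIT set-up, I anticipate no essential obstacle beyond this bookkeeping.
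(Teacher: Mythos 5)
Your proposal is correct and follows essentially the same route as the paper, which itself offers no argument beyond invoking the localization computation of \cite[Section 5]{BigI} on the weighted quasimap graph space; you have simply spelled out that argument — the residue at the distinguished fixed loci with the degree concentrated at the basepoint over $0$, the Euler class of the moving part giving the denominator, the restriction of the twisting class giving the numerator, and the exponentiation of the light-marking insertions giving the prefactor $e^{t_1(H_1+d_1z)/z+t_2(H_2+d_2z)/z}$. (Incidentally, your denominator factor $H_2-\lambda_{2,j}+kz$ is the geometrically correct one; the $H_1-\lambda_{2,j}+kz$ in the paper's displayed formula is evidently a typo.)
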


Using Birkhoff factorization (\cite{KL}), an evaluation of the series $\mathds{S}(H_1^i H_2^j)$ can be obtained from $\mathds{I}$-function similar to \eqref{EofS}.

We define the series $\mathds{A}_{i},\mathds{B}_{i},\dots,\mathds{G}_i $ in $q_1, q_2$ by the following equations.

\begin{align*}
    \mathds{S}(\mathds{1})&=\mathds{1}+\frac{\mathds{A}_{1}H_1+\mathds{A}_{2} H_2}{z}+\mathsf{O}(\frac{1}{z})\,,\\
    \mathds{S}(H_1)&=H_1+\frac{\mathds{B}_1 H_1^2+\mathds{B}_2 H_1 H_2+\mathds{B}_3 \mathds{1}}{z}+\mathsf{O}(\frac{1}{z})\,,\\
     \mathds{S}(H_2)&=H_2+\frac{\mathds{C}_1 H_1^2+\mathds{C}_2 H_1 H_2+\mathds{C}_3 \mathds{1}}{z}+\mathsf{O}(\frac{1}{z})\,,\\
     \mathds{S}(H_1^2)&=H_1^2+\frac{\mathds{E}_1\mathds{1}+\mathds{E}_2 H_1^2 H_2+\mathds{E}_3 H_1+\mathds{E}_4 H_2}{z}+\mathsf{O}(\frac{1}{z})\,,\\
      \mathds{S}(H_1 H_2)&=H_1 H_2+\frac{\mathds{F}_1\mathds{1}+\mathds{F}_2 H_1^2 H_2+\mathds{F}_3 H_1+\mathds{F}_4 H_2}{z}+\mathsf{O}(\frac{1}{z})\,,\\
     \mathds{S}(H_1^2 H_2)&=H_1^2 H_2+\frac{\mathds{G}_1 H_1+\mathds{G}_2 H_2+\mathds{G}_3 H_1^3+\mathds{G}_4 H_1 H_2+\mathds{G}_5 \mathds{1}}{z}+\mathsf{O}(\frac{1}{z})\,.
\end{align*}

\subsubsection{Picard-Fuchs equations and asymptotic expansion.}
The function $\mathds{I}$ satisfies the Picard-Fuchs equations.

\begin{align*}
    &\Big(\Big(z\frac{d}{dt_1}\Big)^3-1-q_1\prod_{k=1}^3\Big(3\Big(z\frac{d}{dt_1}\Big)+2\Big(z\frac{d}{dt_2}\Big)+k z\Big)\Big)\mathds{I}=0\,,\\
    &\Big(\Big(z\frac{d}{dt_2}\Big)^2-1-q_2\prod_{k=1}^2\Big(3\Big(z\frac{d}{dt_1}\Big)+2\Big(z\frac{d}{dt_2}\Big)+k z\Big)\Big)\mathds{I}=0\,.
\end{align*}
Denote the small I-function by
$$\overline{\mathds{I}}:=\mathds{I}|_{t_1=0,t_2=0}\,.$$
The restriction $\overline{\mathds{I}}|_{H_1=\lambda_{1,i},H_2=\lambda_{2,j}}$ admits the asymptotic form,

\begin{align*}
    \overline{\mathds{I}}|_{H_1=\lambda_{1,i},H_2}=e^{\frac{\mathds{U}_{ij}}{z}}\Big(\mathds{R}_{0,ij}+\mathds{R}_{1,ij}z+\mathds{R}_{2,ij}z^2+\dots \Big)
\end{align*}
with series $\mathds{U}_{ij},\mathds{R}_{k,ij}\in \CC[[q_1,q_2]]$. Define series $\mathds{L}_{ij}$ and $\mathds{UD}_{ij}$ by
$$\mathds{L}_{ij}=\lambda_{1,i}+q_1\frac{d}{d q_1}\mathds{U}_{ij}\,,\,\,\mathds{UD}_{ij}=\lambda_{2,j}+q_2\frac{d}{dq_2}\mathds{U}_{ij}\,.$$

Let $L_{ij}$ be the series in $q_1$ defined by the constant term with respect to $q_2$. The argument of Lemma \ref{FG1} yields the following lemma.

\begin{Lemma}\label{L321}
We have
$$\mathds{L}_{ij},\mathds{UD}_{ij},\mathds{R}_{n,ij}\in \CC[[q_2]][L_{ij}^{\pm 1}]\,.$$
\end{Lemma}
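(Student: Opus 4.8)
The plan is to run the argument of Lemma \ref{FG1} line for line, with the two Picard--Fuchs operators of local $\PP^1\times\PP^1$ replaced by the two operators displayed just before Lemma \ref{L321}. Fix a pair $(i,j)$ and suppress it from the notation, the argument being identical for every choice. Substituting the asymptotic form $e^{\mathds{U}/z}(\mathds{R}_0+\mathds{R}_1 z+\cdots)$ of $\overline{\mathds{I}}|_{H_1=\lambda_{1,i},H_2=\lambda_{2,j}}$ into the two Picard--Fuchs equations and extracting the lowest power of $z$ (the eikonal part, after dividing out $e^{\mathds{U}/z}\mathds{R}_0$) yields the algebraic relations
\begin{align*}
\mathds{L}^3-1-q_1(3\mathds{L}+2\mathds{UD})^3=0\,,\qquad \mathds{UD}^2-1-q_2(3\mathds{L}+2\mathds{UD})^2=0\,.
\end{align*}
Setting $q_2=0$ and using $\mathds{UD}|_{q_2=0}=\lambda_{2,j}$ shows that the constant term $L_{ij}$ is the branch of $L^3-1=q_1(3L+2\lambda_{2,j})^3$ with $L_{ij}(0)=\lambda_{1,i}$. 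Differentiating this defining equation expresses $\mathsf{D}_1 L_{ij}$ through $L_{ij}$ alone; this is the analogue of the identity for $\mathsf{D}_1 L$ used in Lemma \ref{FG1}, and it is the structural input that keeps the ring $\CC[L_{ij}^{\pm1}]$ of $q_1$-series stable under $\mathsf{D}_1$ (after adjoining, if needed, the inverse of the denominator that appears, exactly as $\mathsf{G}_{ij}$ adjoined $(1+\beta_j L_{ij})^{-1/2}$ in Section \ref{MS1}).

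For $\mathds{L}$ and $\mathds{UD}$ I would expand $\mathds{U}=\sum_{m\ge0}u_m q_2^m$, so that $\mathds{L}=\lambda_{1,i}+\sum_m(\mathsf{D}_1 u_m)q_2^m$ (with $\mathsf{D}_1 u_0=L_{ij}-\lambda_{1,i}$) and $\mathds{UD}=\lambda_{2,j}+\sum_{m\ge1}m\,u_m q_2^m$, and then read off the coefficient of $q_2^k$ in the second eikonal relation. As in Lemma \ref{FG1}, the resulting equation $\mathrm{Eq}_k$ is linear in the top unknown $u_k$ with the nonzero constant coefficient $2\lambda_{2,j}k$ coming from $\mathds{UD}^2$, while every other term is a polynomial in $L_{ij}$ and in $u_m,\mathsf{D}_1 u_m$ for $m\le k-1$; no $u_m$ with $m\ge k+1$ and no $\mathsf{D}_1 u_k$ occur, because the factor $q_2(3\mathds{L}+2\mathds{UD})^2$ shifts the $q_2$-degree down by one. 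Solving $u_k=-\tfrac{1}{2\lambda_{2,j}k}(\cdots)$ and inducting on $k$, using the $\mathsf{D}_1$-stability from the previous paragraph, places every $u_k$ in the ring, whence $\mathds{L}_{ij},\mathds{UD}_{ij}\in\CC[[q_2]][L_{ij}^{\pm1}]$.

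The statement for $\mathds{R}_n$ comes from the subleading coefficients: reading the coefficient of $z^{n+1}$ in the two Picard--Fuchs equations produces the transport equations, which are linear in $\mathds{R}_n$ over $\mathds{R}_0,\dots,\mathds{R}_{n-1}$ and their $\mathsf{D}_1,\mathsf{D}_2$-derivatives, with leading coefficient the $z$-derivative of the eikonal symbol; this coefficient is invertible in the ring, so the same $q_2$-triangular induction gives $\mathds{R}_{n,ij}\in\CC[[q_2]][L_{ij}^{\pm1}]$. The one genuine point to check — and the place where the argument could break — is precisely the $\mathsf{D}_1$-stability of the coefficient ring together with the invertibility of these leading factors: one must confirm from the defining equation of $L_{ij}$ that differentiation and the required inversions stay inside $\CC[[q_2]][L_{ij}^{\pm1}]$, which is the exact analogue of the role played by $(1+\beta_j L_{ij})^{-1/2}$ in $\mathsf{G}_{ij}$. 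Everything else is the mechanical triangular bookkeeping already carried out in Section \ref{MS1}, so I would invoke that argument rather than repeat it.
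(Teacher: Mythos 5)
Your proposal is correct and is essentially the paper's own proof: the paper disposes of Lemma \ref{L321} with the single sentence that the argument of Lemma \ref{FG1} applies, and your write-up is a faithful transcription of that argument with the Picard--Fuchs operators of the $((3;2),(0;0))$-twisted theory substituted in, yielding the same eikonal relations, the same $q_2$-triangular induction on the $u_k$ with invertible leading coefficient $2\lambda_{2,j}k$, and the same transport-equation treatment of the $\mathds{R}_{n,ij}$. The caveat you flag --- that $\mathsf{D}_1$-stability of $\CC[[q_2]][L_{ij}^{\pm 1}]$ may require adjoining the inverse of the denominator appearing in $\mathsf{D}_1 L_{ij}$, exactly as $(1+\beta_j L_{ij})^{-1/2}$ was adjoined to form $\mathsf{G}_{ij}$ in Section \ref{MS1} --- is precisely the point the paper's one-line proof glosses over, and identifying it is the right thing to do.
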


\subsection{Relations.}
Using the argument in Section \ref{RoG1}, we can find the relations among the series $\mathds{A}_i,\mathds{B}_i,\dots,\mathds{G}_i$. Since this yields complicated expressions, we instead find the relations among the series which are coefficient of $q_2^k$ in $\mathds{A}_i,\mathds{B}_i,\dots$.
Define the series in $q_1$ by the following equations.

\begin{align*}
    \mathds{A}_i(q_1,q_2)=&A_i(q_1)+\sum_{k=1}^{\infty} A_{i,k}(q_1)\,q_2^k\,,\\
    \mathds{B}_i(q_1,q_2)=&B_i(q_1)+\sum_{k=1}^{\infty} B_{i,k}(q_1)\,q_2^k\,,\\
    &\dots\,\\
    \mathds{G}_i(q_1,q_2)=&G_i(q_1)+\sum_{k=1}^{\infty} G_{i,k}(q_1)\,q_2^k\,.
\end{align*}

We get the following results from the argument of Proposition \ref{MG}.
\begin{Prop}\label{L322}
 The series $A_n',B_n',\dots, G_n'$ and $A_{n,k},\,B_{n,k},\,\dots,\,G_{n,k}$ can be represented as rational functions in $B_1',L_{ij}$ for fixed $i\in\{1,2,3\}$ and $j\in \{1,2\}$,
 $$A_n',B_n',\dots\,G_n',A_{n,k},B_{n,k},\dots,G_{n,k}\in\CC(B_1',L_{ij})\,.$$
\end{Prop}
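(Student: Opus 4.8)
The plan is to run the strategy of Proposition \ref{MG} in the six–dimensional cohomology $H^*_{\mathsf{T}}(\PP^2\times\PP^1)$ with ordered basis $\{1,H_1,H_2,H_1^2,H_1H_2,H_1^2H_2\}$. First I would assemble the two operators of quantum multiplication by $H_1+z\mathsf{D}_1$ and $H_2+z\mathsf{D}_2$, realized through the $\mathds{S}$–operators by Birkhoff factorization (the analogue of \eqref{EofS}). In the chosen basis these become $6\times 6$ matrices $M_1,M_2$ whose entries are precisely the series $\mathds{A}_i,\mathds{B}_i,\mathds{C}_i,\mathds{E}_i,\mathds{F}_i,\mathds{G}_i$ together with their $\mathsf{D}_1$– and $\mathsf{D}_2$–derivatives. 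Since the six $\mathsf{T}$–fixed points furnish a simultaneous eigenbasis, the eigenvalues of $M_1$ are the six series $\mathds{L}_{ij}$ and those of $M_2$ are the $\mathds{UD}_{ij}$; the second Picard–Fuchs operator, being quadratic in $z\,d/dt_2$, further forces $\mathds{UD}_{ij}$ into $\CC[[q_2]][L_{ij}^{\pm1}]$, as recorded in Lemma \ref{L321}.

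I would then extract relations from three sources, exactly as in Section \ref{RoG1}: (i) the commutativity $M_1M_2=M_2M_1$ of quantum multiplication; (ii) the eigenvalue (characteristic polynomial) equations for $M_1$ and $M_2$, whose symmetric functions are controlled by the two Picard–Fuchs operators; and (iii) the relations obtained from the commutativity of $\mathsf{D}_1$ and $\mathsf{D}_2$ applied repeatedly to the $\mathds{S}$–operators and matching coefficients of the basis classes. As noted before the statement, solving these for the full two–variable series is unwieldy, so I would instead expand $\mathds{A}_i=A_i+\sum_{k\ge1}A_{i,k}q_2^k$ (and likewise for $\mathds{B}_i,\dots,\mathds{G}_i$) and read off, from the coefficient of $q_2^k$ in each relation, a system $\mathrm{Eq}_k$ among the $q_1$–series $A_{i,k},\dots,G_{i,k}$ and strictly lower $q_2$–order data.

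The reduction to $\CC(B_1',L_{ij})$ then proceeds by induction on the $q_2$–order $k$. At order $k=0$ every $\mathsf{D}_2$–derivative vanishes and $\mathds{UD}_{ij}$ specializes to the constant $\lambda_{2,j}$, so the relations collapse to expressions in the single base series $B_1'$, which coincides with $A_1'$ since the $q_2^0$–part of $\mathds{A}_1$ is the mirror coefficient $I^E_1$, and in $L_{ij}$; here the differential relation expressing $\mathsf{D}_1L_{ij}$ as a rational function of $L_{ij}$ (the analogue for $\mathsf{QEF}$ of $\mathsf{D}_1L=\tfrac{L}{3}(L^3-1)$, already used in the proof of Lemma \ref{L321}) keeps all derivatives of $L_{ij}$ inside $\CC(L_{ij})$, and one reads off $A_n',\dots,G_n'\in\CC(B_1',L_{ij})$. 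For $k\ge1$, Lemma \ref{L321} places the $q_2^k$–coefficients of $\mathds{L}_{ij},\mathds{UD}_{ij},\mathds{R}_{n,ij}$ in $\CC[L_{ij}^{\pm1}]$, while all $\mathsf{D}_2$–derivatives contribute only strictly lower $q_2$–order coefficients that are rational in $B_1',L_{ij}$ by induction; substituting these into $\mathrm{Eq}_k$ leaves each of $A_{i,k},\dots,G_{i,k}$ as the unique solution of an equation linear in that top unknown, hence again a member of $\CC(B_1',L_{ij})$.

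The main obstacle I expect is twofold. The algebra is heavier than in the $\PP^1\times\PP^1$ case, since the matrices are $6\times6$, so one must verify that the commutativity and integrability relations taken together genuinely determine every entry and leave no undetermined combination. More essentially, solvability of the recursion requires that the coefficient of the top unknown in each $\mathrm{Eq}_k$ be a nonzero element of $\CC(B_1',L_{ij})$, the analogue of the explicit nonvanishing leading factor in Lemma \ref{FG2}; verifying this nonvanishing from the explicit form of $L_{ij}$ and the relation for $\mathsf{D}_1L_{ij}$ is the crux that guarantees the recursion can be solved at every order and never leaves the field $\CC(B_1',L_{ij})$.
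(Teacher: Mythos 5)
Your proposal follows essentially the same route as the paper: the paper's proof of Proposition \ref{L322} simply invokes the argument of Proposition \ref{MG} (quantum multiplication matrices, commutativity of the quantum product, commutativity of $\mathsf{D}_1$ and $\mathsf{D}_2$ acting on the $\mathds{S}$-operators), combined with the $q_2$-expansion and Lemma \ref{L321}, exactly as you describe. One small correction: for the $((3;2),(0;0))$-twisted theory the series $A_1'$ does \emph{not} coincide with $B_1'$ (it is a nontrivial rational function of $B_1'$ and $L$, as the explicit formulas after Proposition \ref{L322} show), but this side remark does not affect the structure of your argument.
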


We give the explicit results for the convenience of the reader.
{\tiny
\begin{align*}
    A_1'=&-\frac{1}{(35 + 36 L + 54 L^2) (1 + B_1')^2}(27 - 27 L^3 + 70 B_1' + 35 B_1'^2 + 36 L B_1' (2 + B_1') + 54 L^2 B_1' (2 + B_1'))\,,\\
A_2'=&\frac{1}{3 (35 + 36 L + 54 L^2) (1 + B_1')^2}(54 (-1 + L^3) + 
 2 (-46 + 36 L + 54 L^2 + 81 L^3) B_1' + (-46 + 36 L + 54 L^2 \\&+ 
    81 L^3) B_1'^2)\\
B_2'=&-\frac{1}{3 (35 + 36 L + 54 L^2) (1 + B_1')^2}(108 + 302 B_1' + 256 B_1'^2 + 70 B_1'^3 + 36 L B_1' (4 + 5 B_1' + 2 B_1'^2) + \\&
 54 L^2 B_1' (4 + 5 B_1' + 2 B_1'^2) - 27 L^3 (4 + 6 B_1' + 3 B_1'^2))\\
B_3'=&-\frac{1}{(9 (35 + 36 L + 54 L^2) (1 + B_1')^2)}((36 L B_1'^2 (6 + 8 B_1' + 3 B_1'^2) + 54 L^2 B_1'^2 (6 + 8 B_1' + 3 B_1'^2)\\& - 
  27 L^3 (8 + 36 B_1' + 54 B_1'^2 + 36 B_1'^3 + 9 B_1'^4) + 
  4 (54 + 243 B_1' + 417 B_1'^2 + 313 B_1'^3 + 87 B_1'^4)))\\
C_1'=&0\\
C_2'=&-\frac{1}{(35 + 36 L + 54 L^2) (1 + B_1')^2}(27 - 27 L^3 + 70 B_1' + 35 B_1'^2 + 36 L B_1' (2 + B_1') + 54 L^2 B_1' (2 + B_1'))\\
C_3'=&\frac{1}{3 (35 + 36 L + 54 L^2) (1 + B_1')^2}(36 L B_1' (2 + B_1') + 54 L^2 B_1' (2 + B_1') + 27 L^3 (2 + 6 B_1' + 3 B_1'^2) \\&- 
 2 (27 + 46 B_1' + 23 B_1'^2))\\
E_1'=&B_1'\\
E_2'=&\frac{2 B_1'}{3}\\
E_3'=&\frac{1}{9 (35 + 36 L + 54 L^2) (1 + B_1')^2}(108 (-1 + L^3) + 324 (-1 + L^3) B_1' - 
 6 (62 + 36 L + 54 L^2 - 27 L^3) B_1'^2 \\&- 4 (35 + 36 L + 54 L^2) B_1'^3)\\
E_4'=&-\frac{1}{27 (35 + 36 L + 54 L^2) (1 + B_1')^2}(72 L B_1'^3 (4 + 3 B_1') + 108 L^2 B_1'^3 (4 + 3 B_1') \\&- 
 54 L^3 (2 + 6 B_1' + 3 B_1'^2)^2 + 
 8 (27 + 162 B_1' + 324 B_1'^2 + 278 B_1'^3 + 87 B_1'^4))\\
F_1'=&0\\
F_2'=&B_1'\\
F_3'=&-\frac{1}{3 (35 + 36 L + 54 L^2) (1 + B_1')^2}(108 + 302 B_1' + 256 B_1'^2 + 70 B_1'^3 \\&+ 36 L B_1' (4 + 5 B_1' + 2 B_1'^2) + 
 54 L^2 B_1' (4 + 5 B_1' + 2 B_1'^2) - 27 L^3 (4 + 6 B_1' + 3 B_1'^2))\\
F_4'=&-\frac{1}{9 (35 + 36 L + 54 L^2) (1 + B_1')^2}((36 L B_1'^2 (6 + 8 B_1' + 3 B_1'^2) + 54 L^2 B_1'^2 (6 + 8 B_1' + 3 B_1'^2)\\& - 
  27 L^3 (8 + 36 B_1' + 54 B_1'^2 + 36 B_1'^3 + 9 B_1'^4) + 
  4 (54 + 243 B_1' + 417 B_1'^2 + 313 B_1'^3 + 87 B_1'^4)))\\
G_1'=&0\\
G_2'=&B_1'\\
G_3'=&\frac{2 B_1'}{3}\\
G_4'=&\frac{1}{9 (35 + 36 L + 54 L^2) (1 + B_1')^2}(108 (-1 + L^3) + 324 (-1 + L^3) B_1' - 
 6 (62 + 36 L + 54 L^2 - 27 L^3) B_1'^2 \\&- 4 (35 + 36 L + 54 L^2) B_1'^3)\\
G_5'=&-\frac{1}{27 (35 + 36 L + 54 L^2) (1 + B_1')^2}(72 L B_1'^3 (4 + 3 B_1') + 108 L^2 B_1'^3 (4 + 3 B_1') \\&- 
 54 L^3 (2 + 6 B_1' + 3 B_1'^2)^2 + 
 8 (27 + 162 B_1' + 324 B_1'^2 + 278 B_1'^3 + 87 B_1'^4))
\end{align*}
}

\subsection{Poof of Theorem \ref{MT2}}

The theorem follows from the argument in Section \ref{Mainpf} together with Lemma \ref{L321} and Proposition \ref{L322}.

\section{Elliptic fibration : Threefold}\label{MS3}

\subsection{Overview.}
We study the $((3,3),(0,0))$-twisted theory on $\PP^2\times\PP^2$. This theory recover the standard theory of elliptic fibered Calabi-Yau 3-fold X, defined by the general section of the anti-canonical bundle over $\PP^2\times\PP^2$ for genus zero and one. 

For the rest of the section, the specialization
\begin{align*}
    \lambda_{1,k}=e^{\frac{2\pi i k}{3}}\,,\,\lambda_{2,k}=e^{\frac{2\pi i k}{3}}
\end{align*}
will be fixed.
Since the argument of the proof is parallel to that of Section \ref{MS1}, we mostly omit the proofs whose arguments appeared in Section \ref{MS1}. 

\subsection{Generators.}
From the small $I$-function associated to $((3;3),(0;0))$-twisted theory on $\PP^2\times\PP^2$,
\begin{align*}
    I(q_1,q_2)=\sum_{d_1,d_2\ge 0}q_1^{d_1}q_2^{d_2}
    \frac{\prod_{k=1}^{3d_1+2d_2}(3H_1+3H_2+kz)}{\prod_{i=0}^2\prod_{k=1}^{d_1}(H_1-\lambda_{1,i}+kz)\prod_{j=0}^2\prod_{k=1}^{d_2}(H_1-\lambda_{2,j}+kz)}\,,
\end{align*}
we get the big $\mathds{I}$-function using the argument in \cite[Section 5]{BigI}.

\begin{Prop}
For $\mathsf{t}=t_1H_1+t_2 H_2\in H^*_{\mathsf{T}}([(\CC^3\times\CC^3)/(\CC^*\times\CC^*)],\QQ)$,
\begin{multline}
    \mathds{I}(\mathsf{t})=\sum_{d_1,d_2\ge 0}q_1^{d_1}q_2^{d_2}e^{t_1(H_1+d_1z)/z+t_2(H_2+d_2z)/z}\\
    \cdot\frac{\prod_{k=1}^{3d_1+2d_2}(3H_1+3H_2+kz)}{\prod_{i=0}^2\prod_{k=1}^{d_1}(H_1-\lambda_{1,i}+kz)\prod_{j=0}^2\prod_{k=1}^{d_2}(H_1-\lambda_{2,j}+kz)}\,.
\end{multline}
\end{Prop}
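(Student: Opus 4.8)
The plan is to compute the big $\mathds{I}$-function by equivariant localization on the weighted quasimap graph space, following the template of \cite[Section 5]{BigI} verbatim, with only the target and twist data substituted. First I would introduce the graph space
$$\mathsf{QG}^{0+,0+}_{0,0|k,(d_1,d_2)}\big([(\CC^3\times\CC^3)/(\CC^*\times\CC^*)]\big)\,,$$
whose objects carry a distinguished parametrized rational component $\PP^1$ together with $k$ light markings. This $\PP^1$ admits the standard $\CC^*$-action with tangent weight $z$ at $0\in\PP^1$, and the big $\mathds{I}$-function is by definition the generating series, over degrees $(d_1,d_2)$ and numbers $k$ of light markings, that assembles the contributions of the $\CC^*$-fixed loci in which the entire degree is concentrated at $0$ and the light markings are decorated by the input $\mathsf{t}=t_1H_1+t_2H_2$.

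Next I would apply the virtual localization formula with respect to this fiberwise $\CC^*$. The relevant fixed locus is the one where the quasimap is constant away from $0$ and carries a base point of length $(d_1,d_2)$ there. On this locus the deformations of the underlying quasimap to the first factor $[\CC^3/\CC^*]$ form the moving space $H^0(\PP^1,\mathcal{O}(d_1))^{\oplus 3}$, whose equivariant Euler class is $\prod_{i=0}^2\prod_{k=1}^{d_1}(H_1-\lambda_{1,i}+kz)$; the analogous computation for the second factor with $H_2$, weights $\lambda_{2,j}$ and degree $d_2$ produces $\prod_{j=0}^2\prod_{k=1}^{d_2}(H_2-\lambda_{2,j}+kz)$. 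Together these give the denominator. The exponential prefactor $e^{t_1(H_1+d_1z)/z+t_2(H_2+d_2z)/z}$ arises from restricting the light-marking insertions $\mathsf{t}$ to the fixed point, combined with the degree shift $d_iz$ of the relevant equivariant parameter along the distinguished component.

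It then remains to incorporate the $(\mathsf{a},\mathsf{b})=((3;3),(0;0))$ twist. Here the twisting class is $e(R\pi_*\,\mathsf{S}_1^{-3}\otimes\mathsf{S}_2^{-3})=e(R\pi_*f^*\mathcal{O}(3,3))$, and over the length-$(d_1,d_2)$ chain the pushforward $R\pi_*f^*\mathcal{O}(3,3)$ is computed from $H^0(\PP^1,\mathcal{O}(3d_1+3d_2))$ with fiberwise weights $3H_1+3H_2+kz$. Its equivariant Euler class contributes the numerator $\prod_k(3H_1+3H_2+kz)$; the Calabi--Yau condition $\sum_i a_{ki}=n_k+1=3$ is exactly what balances virtual dimensions so that no extraneous equivariant weights survive. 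Assembling the vertex factor at $0\in\PP^1$ from these numerator and denominator factors, summing over $(d_1,d_2)$, and finally setting $t_1=t_2=0$ recovers the small $I$-function, which yields the stated formula.

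The main obstacle will be the careful bookkeeping of the equivariant weights in the localization, rather than any conceptual difficulty: one must check that the fiberwise weight $z$ enters each Chern-root factor with precisely the shift $kz$, that $R^1\pi_*f^*\mathcal{O}(3,3)$ vanishes along the relevant loci so that $R\pi_*$ reduces to $H^0$ with no correction, and that the light-marking contributions combine with the graph-space $\psi$-classes to produce exactly the exponential prefactor with no additional polynomial tail. Each of these points is handled in \cite[Section 5]{BigI}, and the argument there applies once the target $[(\CC^3\times\CC^3)/(\CC^*\times\CC^*)]$ and the twist above are substituted.
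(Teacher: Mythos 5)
Your proposal is correct and follows essentially the same route as the paper, which simply invokes the localization computation of \cite[Section 5]{BigI} on the weighted quasimap graph space with the target and twist data substituted; your spelled-out account of the fixed-locus contributions, the denominator Euler classes, the twist factor $e(R\pi_*f^*\mathcal{O}(3,3))$, and the exponential prefactor is exactly that argument. Note only that your (correct) count of the twisting sections gives $H^0(\PP^1,\mathcal{O}(3d_1+3d_2))$, so the numerator product should run to $3d_1+3d_2$ (and the second denominator factor should involve $H_2$, not $H_1$); the displayed formula's $3d_1+2d_2$ is evidently a typo carried over from the $((3;2),(0;0))$ case.
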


Using Birkhoff factorization (\cite{KL}), an evaluation of the series $\mathds{S}(H_1^i H_2^j)$ can be obtained from $\mathds{I}$-function similar to \eqref{EofS}.

We define the series $\mathds{A}_{i},\mathds{B}_{i},\dots,\mathds{J}_i $ in $q_1, q_2$ by the following equations.

\begin{align*}
    \mathds{S}(\mathds{1})&=\mathds{1}+\frac{\mathds{A}_{1}H_1+\mathds{A}_{2} H_2}{z}+\mathsf{O}(\frac{1}{z^2})\,,\\
    \mathds{S}(H_1)&=H_1+\frac{\mathds{B}_1 H_1^2+\mathds{B}_2 H_1 H_2+\mathds{B}_3 H_2^2}{z}+\mathsf{O}(\frac{1}{z^2})\,,\\
     \mathds{S}(H_2)&=H_2+\frac{\mathds{C}_1 H_1^2+\mathds{C}_2 H_1 H_2+\mathds{C}_3 H_2^2}{z}+\mathsf{O}(\frac{1}{z^2})\,,\\
     \mathds{S}(H_1^2)&=H_1^2+\frac{\mathds{E}_1\mathds{1}+\mathds{E}_2 H_1^2 H_2+\mathds{E}_3 H_1 H_2^2}{z}+\mathsf{O}(\frac{1}{z^2})\,,\\
      \mathds{S}(H_1 H_2)&=H_1 H_2+\frac{\mathds{F}_1\mathds{1}+\mathds{F}_2 H_1^2 H_2+\mathds{F}_3 H_1 H_2^2}{z}+\mathsf{O}(\frac{1}{z^2})\,,\\
     \mathds{S}(H_2^2)&= H_2^2+\frac{\mathds{G}_1\mathds{1}+\mathds{G}_2 H_1^2 H_2+\mathds{G}_3 H_1 H_2^2}{z}+\mathsf{O}(\frac{1}{z^2})\,,\\
     \mathds{S}(H_1^2H_2)&= H_1^2H_2+\frac{\mathds{H}_1H_1+\mathds{H}_2 H_2+\mathds{H}_3 H_1^2H_2^2 }{z}+\mathsf{O}(\frac{1}{z^2})\,,\\
     \mathds{S}(H_1H_2^2)&= H_1H_2^2+\frac{\mathds{I}_1H_1+\mathds{I}_2 H_2+\mathds{I}_3 H_1^2H_2^2 }{z}+\mathsf{O}(\frac{1}{z^2})\,,\\
     \mathds{S}(H_1^2H_2^2)&= H_1^2H_2^2+\frac{\mathds{J}_1H_1^2+\mathds{J}_2 H_1H_2+\mathds{J}_3 H_2^2 }{z}+\mathsf{O}(\frac{1}{z^2})\,.
\end{align*}

\subsubsection{Picard-Fuchs equations and asymptotic expansion.}
The function $\mathds{I}$ satisfies the Picard-Fuchs equations.

\begin{align*}
    &\Big(\Big(z\frac{d}{dt_1}\Big)^3-1-q_1\prod_{k=1}^3\Big(3\Big(z\frac{d}{dt_1}\Big)+3\Big(z\frac{d}{dt_2}\Big)+k z\Big)\Big)\mathds{I}=0\,,\\
    &\Big(\Big(z\frac{d}{dt_2}\Big)^3-1-q_2\prod_{k=1}^3\Big(3\Big(z\frac{d}{dt_1}\Big)+3\Big(z\frac{d}{dt_2}\Big)+k z\Big)\Big)\mathds{I}=0\,.
\end{align*}
Denote the small I-function by
$$\overline{\mathds{I}}:=\mathds{I}|_{t_1=0,t_2=0}\,.$$
The restriction $\overline{\mathds{I}}|_{H_1=\lambda_{1,i},H_2=\lambda_{2,j}}$ admits the asymptotic form,

\begin{align*}
    \overline{\mathds{I}}|_{H_1=\lambda_{1,i},H_2}=e^{\frac{\mathds{U}_{ij}}{z}}\Big(\mathds{R}_{0,ij}+\mathds{R}_{1,ij}z+\mathds{R}_{2,ij}z^2+\dots \Big)
\end{align*}
with series $\mathds{U}_{ij},\mathds{R}_{k,ij}\in \CC[[q_1,q_2]]$. Define series $\mathds{L}_{ij}$ and $\mathds{UD}_{ij}$ by
$$\mathds{L}_{ij}=\lambda_{1,i}+q_1\frac{d}{d q_1}\mathds{U}_{ij}\,,\,\,\mathds{UD}_{ij}=\lambda_{2,j}+q_2\frac{d}{dq_2}\mathds{U}_{ij}\,.$$

Let $L_{ij}$ be the series in $q_1$ defined by the constant term with respect to $q_2$. The argument of Lemma \ref{FG1} yields the following lemma.

\begin{Lemma}\label{L331}
We have
$$\mathds{L}_{ij},\mathds{UD}_{ij},\mathds{R}_{n,ij}\in \CC[[q_2]][L_{ij}^{\pm 1}]\,.$$
\end{Lemma}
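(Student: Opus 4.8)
The plan is to transcribe the proof of Lemma~\ref{FG1} (and of its $\PP^2\times\PP^1$ analogue Lemma~\ref{L321}), the only genuinely new ingredient being the differential equation satisfied by $L_{ij}$. First I would conjugate the two Picard--Fuchs operators by $e^{\mathds{U}_{ij}/z}$ and restrict to $H_1=\lambda_{1,i}$, $H_2=\lambda_{2,j}$. Using $e^{-\mathds{U}_{ij}/z}M_1e^{\mathds{U}_{ij}/z}=\mathds{L}_{ij}+z\mathsf{D}_1$ and $e^{-\mathds{U}_{ij}/z}M_2e^{\mathds{U}_{ij}/z}=\mathds{UD}_{ij}+z\mathsf{D}_2$, applying the conjugated operators to $\mathds{R}_{0,ij}+\mathds{R}_{1,ij}z+\cdots$ and extracting the leading coefficient in $z$ yields the eikonal equations
\begin{align*}
\mathds{L}_{ij}^3-1-27q_1(\mathds{L}_{ij}+\mathds{UD}_{ij})^3&=0,\\
\mathds{UD}_{ij}^3-1-27q_2(\mathds{L}_{ij}+\mathds{UD}_{ij})^3&=0.
\end{align*}
Setting $q_2=0$ in the first one, where $\mathds{UD}_{ij}|_{q_2=0}=\lambda_{2,j}$, gives the algebraic equation $L_{ij}^3-1=27q_1(L_{ij}+\lambda_{2,j})^3$ that defines $L_{ij}$, and differentiating it produces
$$\mathsf{D}_1 L_{ij}=\frac{(L_{ij}^3-1)(L_{ij}+\lambda_{2,j})}{3(\lambda_{2,j}L_{ij}^2+1)}.$$

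Next I would expand $\mathds{U}_{ij}=\sum_{m\ge0}u_{m,ij}q_2^m$ as in \eqref{smse}, so that the $q_2^k$-coefficients of $\mathds{L}_{ij}$ and $\mathds{UD}_{ij}$ are $\mathsf{D}_1u_{k,ij}$ and $k\,u_{k,ij}$ respectively. Substituting into the second eikonal equation and collecting the coefficient of $q_2^k$ gives an equation $\mathrm{Eq}_k$ that is linear in $u_{k,ij}$ with coefficient $3\lambda_{2,j}^2k$ (nonzero, since $\lambda_{2,j}$ is a nonzero root of unity), and whose remaining terms are monomials in $L_{ij}$ and in the $\mathsf{D}_1^l u_{m,ij}$ with $m\le k-1$; no $u_{m,ij}$ with $m\ge k+1$ appears. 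Solving $\mathrm{Eq}_k$ for $u_{k,ij}$ and inducting on $k$ shows that every $q_2$-coefficient of $\mathds{L}_{ij}$ and $\mathds{UD}_{ij}$ lies in the smallest $\mathsf{D}_1$-stable subring of $\CC[[q_1]]$ containing $L_{ij}$. Running the identical recursion on the coefficient of $z^{n+1}$ (the transport equations for the $\mathds{R}_{n,ij}$) settles the statement for $\mathds{R}_{n,ij}$, exactly as in the final line of the proof of Lemma~\ref{FG1}.

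The only step that is not pure bookkeeping — and it is the precise analogue of adjoining $(1+\beta_jL_{ij})^{-1/2}$ in Lemma~\ref{FG1} — is to identify this $\mathsf{D}_1$-stable ring explicitly and to check that the recursion stays inside it. The formula for $\mathsf{D}_1 L_{ij}$ above shows that $\mathsf{D}_1$-closure forces one to invert $\lambda_{2,j}L_{ij}^2+1$, so a priori the coefficients land in $\CC[L_{ij},(\lambda_{2,j}L_{ij}^2+1)^{-1}]$, and the transport equations will in addition call for a square-root (Hessian-type) generator, just as the $\mathds{R}_{n,ij}$ require $(1+\beta_jL_{ij})^{-1/2}$ in Lemma~\ref{FG1}. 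Verifying that these are the only denominators produced by the recursion, and reconciling the resulting ring with the $L_{ij}^{\pm1}$ appearing in the statement, is where essentially all the content of the lemma sits; everything else is the formal $q_2$-recursion already executed for $K_{\PP^1\times\PP^1}$.
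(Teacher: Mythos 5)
Your setup is exactly the argument the paper intends (its own proof of Lemma~\ref{L331} is a one-line pointer to Lemma~\ref{FG1}): the conjugation by $e^{\mathds{U}_{ij}/z}$, the two eikonal equations, the algebraic equation $L_{ij}^3-1=27q_1(L_{ij}+\lambda_{2,j})^3$ with its consequence for $\mathsf{D}_1L_{ij}$, and the observation that the $q_2^k$-coefficient of the second eikonal equation is linear in $u_{k,ij}$ with nonzero coefficient $3\lambda_{2,j}^2k$ and involves no $u_{m,ij}$ with $m\ge k+1$, are all correct and are the faithful transplant of the $K_{\PP^1\times\PP^1}$ proof.

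However, the point you defer at the end is a genuine obstruction rather than bookkeeping, and it blocks the statement as literally printed. Since
$$\mathsf{D}_1L_{ij}=\frac{(L_{ij}^3-1)(L_{ij}+\lambda_{2,j})}{3(\lambda_{2,j}L_{ij}^2+1)}\,,$$
the ring $\CC[L_{ij}^{\pm1}]$ is not $\mathsf{D}_1$-stable, and the denominator does not cancel out of the recursion: from $3\lambda_{2,j}^2u_{1,ij}=27(L_{ij}+\lambda_{2,j})^3$ one gets $u_{1,ij}=9\lambda_{2,j}(L_{ij}+\lambda_{2,j})^3$ and hence
$$\mathsf{D}_1u_{1,ij}=\frac{9\lambda_{2,j}(L_{ij}+\lambda_{2,j})^3(L_{ij}^3-1)}{\lambda_{2,j}L_{ij}^2+1}\,,$$
which is the $q_2^1$-coefficient of $\mathds{L}_{ij}$. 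The factor $\lambda_{2,j}L_{ij}^2+1$ shares no root with $(L_{ij}^3-1)(L_{ij}+\lambda_{2,j})$: at a root one has $L^2=-\lambda_{2,j}^{-1}$, which is neither $L^{-1}$ for $L^3=1$ (since $(-\lambda_{2,j}^{-1})^3=-1$) nor $\lambda_{2,j}^2$. So this coefficient, and likewise $u_{2,ij}$, lies in $\CC[L_{ij},(\lambda_{2,j}L_{ij}^2+1)^{-1}]$ but not in $\CC[L_{ij}^{\pm1}]$. Your argument therefore proves the lemma with the target ring enlarged to $\CC[[q_2]][L_{ij},(\lambda_{2,j}L_{ij}^2+1)^{-1}]$ (together with the Hessian-type square root needed for the $\mathds{R}_{n,ij}$, exactly as $(1+\beta_jL_{ij})^{-1/2}$ enters $\mathsf{G}_{ij}$ in Lemma~\ref{FG1}); the printed ring $\CC[[q_2]][L_{ij}^{\pm1}]$ appears to be carried over from the untwisted fiber direction, where $\lambda_{2,j}=0$ and $\mathsf{D}L=\tfrac{L}{3}(L^3-1)$ does preserve $\CC[L^{\pm1}]$. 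The enlargement is harmless downstream, since Theorem~\ref{MT3} only asserts membership in the field $\CC[[q_2]](L^3,B_1',X)$, but to close the proof you must either adopt the corrected ring or exhibit a cancellation that, by the computation above, does not occur.
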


\subsection{Relations.}\label{GE1}
Using the argument in Section \ref{RoG1}, we can find the relations among the series $\mathds{A}_i,\mathds{B}_i,\dots,\mathds{J}_i$. Since this yields complicated expressions, we instead find the relations among the series which are coefficient of $q_2^k$ in $\mathds{A}_i,\mathds{B}_i,\dots,\mathds{J}_i$.
Define the series in $q_1$ 
\begin{align*}
    \mathds{A}_i(q_1,q_2)=&A_i(q_1)+\sum_{k=1}^{\infty} A_{i,k}(q_1)\,q_2^k\,,\\
    \mathds{B}_i(q_1,q_2)=&B_i(q_1)+\sum_{k=1}^{\infty} B_{i,k}(q_1)\,q_2^k\,,\\
    &\dots\,\\
    \mathds{J}_i(q_1,q_2)=&J_i(q_1)+\sum_{k=1}^{\infty} J_{i,k}(q_1)\,q_2^k\,.
\end{align*}

We get the following results from the argument of Proposition \ref{MG}.
\begin{Prop}\label{L332}
 The series $A_n',B_n',\dots,J_n'$ and $A_{n,k},B_{n,k},\dots,J_{n,k}$ can be represented as rational functions in $B_1',L_{ij}$ for fixed $i\,,j\in\{1,2,3\}$,
 $$A_n',B_n',\dots,J_n',A_{n,k},B_{n,k}\dots,J_{n,k}\in\CC(B_1',L_{ij})\,.$$
\end{Prop}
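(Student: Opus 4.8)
The plan is to run the argument of Proposition \ref{MG} in the nine-dimensional cohomology $H^*_{\mathsf{T}}(\PP^2\times\PP^2)$ with basis $\{H_1^aH_2^b\}_{0\le a,b\le 2}$, and then to extract coefficients of $q_2^k$. First I would record the quantum differential equations obeyed by the operators $\mathds{S}$, namely $z\mathsf{D}_1\mathds{S}(\gamma)=\mathds{S}(M_1\ast\gamma)$ and $z\mathsf{D}_2\mathds{S}(\gamma)=\mathds{S}(M_2\ast\gamma)$, where $\ast$ is the quantum product and $M_1=H_1+z\mathsf{D}_1$, $M_2=H_2+z\mathsf{D}_2$. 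Expanding each $\mathds{S}(H_1^aH_2^b)$ in powers of $1/z$ turns the definitions of $\mathds{A}_i,\dots,\mathds{J}_i$ into a finite system of order-$1/z$ relations. In this basis the operators of quantum multiplication by $M_1$ and $M_2$ are represented by $9\times 9$ matrices whose eigenvalues are $\mathds{L}_{ij}$ and $\mathds{UD}_{ij}$ respectively, exactly as in the proof of Lemma \ref{L331}; together with the identities $\mathsf{D}_1\mathds{U}_{ij}=\mathds{L}_{ij}-\lambda_{1,i}$ and $\mathsf{D}_2\mathds{U}_{ij}=\mathds{UD}_{ij}-\lambda_{2,j}$ this couples the connection coefficients to the two eigenvalue series.

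Next, exactly as in Proposition \ref{MG}, I would harvest relations from three sources: the commutativity and associativity of the quantum product (the matrices for $M_1\ast$ and $M_2\ast$ commute), the symmetry of the quantum connection, and the commutativity $[\mathsf{D}_1,\mathsf{D}_2]=0$ obtained by applying $\mathsf{D}_1$ and $\mathsf{D}_2$ to the $\mathds{S}$-operators in both orders. Matching the coefficient of each monomial $H_1^aH_2^b$ at the next order in $1/z$ produces a closed polynomial system in $\mathds{A}_i,\dots,\mathds{J}_i$, $\mathds{L}_{ij}$, $\mathds{UD}_{ij}$ and a single distinguished input series. Once the eigenvalues and the input series are treated as given, the system is linear in the remaining unknowns, so solving it expresses every $\mathds{A}_i,\dots,\mathds{J}_i$ as a rational function of the input series, $\mathds{L}_{ij}$ and $\mathds{UD}_{ij}$. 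This step is literally the higher-rank analogue of the computation recorded explicitly in Proposition \ref{L322}.

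Finally I would pass to coefficients of $q_2^k$. For $k=0$ the theory degenerates to the $3H_1$-twisted theory on $\PP^2$ studied in \cite{LP} (the second factor contributing only classically, since all $q_2$-quantum corrections vanish), whose connection coefficients are rational in $B_1'$ and $L$; here $\mathds{UD}_{ij}|_{q_2=0}=\lambda_{2,j}$, $\mathds{L}_{ij}|_{q_2=0}=L_{ij}$, and every $\bullet$-derivative of a $q_2^0$-term vanishes, which identifies the distinguished input series with $B_1'$ and yields $A_n',\dots,J_n'\in\CC(B_1',L_{ij})$. For $k\ge 1$ I would induct on $k$: by Lemma \ref{L331} the series $\mathds{L}_{ij},\mathds{UD}_{ij},\mathds{R}_{n,ij}$ lie in $\CC[[q_2]][L_{ij}^{\pm 1}]$, and each $\bullet$-derivative raises the $q_2$-order, so the coefficient of $q_2^k$ in every relation is a linear equation for $A_{n,k},\dots,J_{n,k}$ whose remaining coefficients already lie in $\CC(B_1',L_{ij})$ by the inductive hypothesis; solving recursively gives $A_{n,k},\dots,J_{n,k}\in\CC(B_1',L_{ij})$. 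That a single $L_{ij}$ serves for every $(i,j)$ follows, as in the proof of Theorem \ref{MT1} and the remark after Proposition \ref{MG}, from the facts that the $L_{ij}$ are Galois-conjugate roots of one polynomial over $\CC(q_1)$ and that the resulting expressions are independent of the index $(i,j)$.

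The hard part will be the solvability of these linear systems over $\CC(B_1',L_{ij})$. Concretely, one must verify that the quantity inverted at each stage — the Wronskian-type determinant of the $q_1$-direction quantum connection, and the leading coefficient appearing at each $q_2$-order — is a nonzero element of $\CC(B_1',L_{ij})$, and, more delicately, that the $q_2$-direction data $\mathds{UD}_{ij}$ together with the $\bullet$-derivatives can genuinely be eliminated in favour of $B_1'$ and $L_{ij}$ rather than forcing in new independent generators. As in Proposition \ref{MG}, establishing this is a lengthy but mechanical computation, whose output takes the same explicit form as the rational expressions displayed for the surface case in Proposition \ref{L322}.
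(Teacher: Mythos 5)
Your proposal follows essentially the same route as the paper: the paper's proof of Proposition \ref{L332} simply invokes the argument of Proposition \ref{MG} (quantum multiplication matrices with eigenvalues $\mathds{L}_{ij}$, $\mathds{UD}_{ij}$, commutativity of the quantum product, and $[\mathsf{D}_1,\mathsf{D}_2]=0$ applied to the $\mathds{S}$-operators) in the nine-dimensional basis, extracts $q_2^k$-coefficients, and records the resulting explicit rational expressions in Appendix \ref{APE}. Your added detail on the $q_2$-adic induction and on the solvability of the linear systems is a reasonable fleshing-out of what the paper leaves implicit and verifies only by exhibiting the explicit formulas.
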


We give the explicit results in Appendix \ref{APE} for the convenience of the reader.

\subsection{Poof of Theorem \ref{MT3}}

The theorem follows from the argument in Section \ref{Mainpf} together with Lemma \ref{L331} and Proposition \ref{L332}.

\
\

\section{K3 fibration}\label{MS4}

\subsection{Overview.}
We study the $((4,2),(0,0))$-twisted theory on $\PP^3\times\PP^1$. This theory recover the standard theory of K3-fibered Calabi-Yau 3-fold X, defined by the general section of the anti-canonical bundle over $\PP^3\times\PP^1$ for genus zero and one. 

For the rest of the section, the specialization
\begin{align*}
    \lambda_{1,k}=\sqrt{-1}^k\,,\,\lambda_{2,1}=1\,,\,\lambda_{2,2}=-1
\end{align*}
will be fixed.
Since the argument of the proof is parallel to that of Section \ref{MS1}, we mostly omit the proofs whose arguments appeared in Section \ref{MS1}. 

\subsection{Generators.}\label{K3Gen}
From the small $I$-function associated to $((4;2),(0;0))$-twisted theory on $\PP^3\times\PP^1$,
\begin{align*}
    I(q_1,q_2)=\sum_{d_1,d_2\ge 0}q_1^{d_1}q_2^{d_2}
    \frac{\prod_{k=1}^{4d_1+2d_2}(4H_1+2H_2+kz)}{\prod_{i=0}^3\prod_{k=1}^{d_1}(H_1-\lambda_{1,i}+kz)\prod_{j=0}^1\prod_{k=1}^{d_2}(H_1-\lambda_{2,j}+kz)}\,,
\end{align*}
we get the big $\mathds{I}$-function using the argument in \cite[Section 5]{BigI}.

\begin{Prop}
For $\mathsf{t}=t_1H_1+t_2 H_2\in H^*_{\mathsf{T}}([(\CC^3\times\CC^2)/(\CC^*\times\CC^*)],\QQ)$,
\begin{multline}
    \mathds{I}(\mathsf{t})=\sum_{d_1,d_2\ge 0}q_1^{d_1}q_2^{d_2}e^{t_1(H_1+d_1z)/z+t_2(H_2+d_2z)/z}\\
    \cdot\frac{\prod_{k=1}^{4d_1+2d_2}(4H_1+2H_2+kz)}{\prod_{i=0}^3\prod_{k=1}^{d_1}(H_1-\lambda_{1,i}+kz)\prod_{j=0}^1\prod_{k=1}^{d_2}(H_1-\lambda_{2,j}+kz)}\,.
\end{multline}
\end{Prop}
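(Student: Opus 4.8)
The plan is to run the same torus localization on the weighted quasimap graph space that was used (following \cite[Section 5]{BigI}) to obtain the analogous $\mathds{I}$-function formulas for $\PP^1\times\PP^1$, $\PP^2\times\PP^1$ and $\PP^2\times\PP^2$ in the previous sections. First I would introduce the graph space $\mathsf{QG}^{0+,0+}_{0,0|k,(d_1,d_2)}([(\CC^4\times\CC^2)/(\CC^*\times\CC^*)])$, together with the extra $\CC^*$-action that scales the parametrized domain $\PP^1$ and fixes $0,\infty$, and apply virtual localization with respect to this torus. By the definition of the big $I$-function in \cite{BigI}, $\mathds{I}(\mathsf{t})$ is the generating series, over curve classes $(d_1,d_2)$, of the contributions of the distinguished fixed loci: those stable quasimaps whose entire degree is concentrated in a base point over $0\in\PP^1$, which are constant elsewhere, and which carry the input $\mathsf{t}=t_1H_1+t_2H_2$ at the light markings.

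The key step is to evaluate the three factors of each such contribution. The denominator comes from the deformations of the homogeneous coordinates of the toric target: the four coordinates of $\PP^3$ (with $\mathsf{T}$-weights $\lambda_{1,i}$, all in the divisor class $H_1$) produce $\prod_{i=0}^3\prod_{k=1}^{d_1}(H_1-\lambda_{1,i}+kz)$, and the two coordinates of $\PP^1$ (weights $\lambda_{2,j}$, class $H_2$) produce $\prod_{j=0}^1\prod_{k=1}^{d_2}(H_2-\lambda_{2,j}+kz)$. The numerator comes from the twist: the bundle $\mathsf{S}_1^{-4}\otimes\mathsf{S}_2^{-2}$ carries the $(\CC^*\times\CC^*)$-character $(4,2)$, so over the degree-$(d_1,d_2)$ cover its pushforward is the convex bundle $R^0\pi_*\cO_{\PP^1}(4d_1+2d_2)$, whose equivariant Euler class contributes $\prod_{k=1}^{4d_1+2d_2}(4H_1+2H_2+kz)$. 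Finally, inserting $\mathsf{t}$ at the light points and tracking the degree shift along the parametrized component produces the standard big-$I$ prefactor $e^{t_1(H_1+d_1z)/z+t_2(H_2+d_2z)/z}$. Summing these assembled contributions over all $(d_1,d_2)$ yields the claimed closed form.

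As in the earlier cases, the only real subtlety is the bookkeeping of the twist factor rather than any new geometric input. The Calabi-Yau conditions $\sum_i a_{1i}=4=n_1+1$ and $\sum_i a_{2i}=2=n_2+1$ are exactly what make the numerator $\prod_{k=1}^{4d_1+2d_2}$ and the denominator $\prod_{i=0}^3\prod_{k=1}^{d_1}\cdot\prod_{j=0}^1\prod_{k=1}^{d_2}$ contain the same number $4d_1+2d_2$ of linear factors, so that each summand is homogeneous of degree $0$ and the normalization $\mathds{I}=1+O(z^{-1})$ is preserved; in particular the range $k=1,\dots,4d_1+2d_2$ (with no spurious $k=0$ factor $4H_1+2H_2$) is forced by this normalization, exactly as in the quintic $I$-function. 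Once the weights are checked to assemble into $4H_1+2H_2+kz$ for one representative fixed locus, the computation is word for word that of \cite[Section 5]{BigI} and of Sections \ref{MS1}--\ref{MS3}, and the Proposition follows.
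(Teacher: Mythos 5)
Your proposal is correct and follows essentially the same route as the paper, which proves this Proposition only by invoking the localization computation of \cite[Section 5]{BigI} on the weighted quasimap graph space; you have supplied the standard outline of that argument (distinguished fixed loci over $0\in\PP^1$, coordinate deformations giving the denominator, the convex twist $R^0\pi_*$ giving the numerator, and the light-marking prefactor), including the correct normalization forcing the product range $k=1,\dots,4d_1+2d_2$. Note only that your denominator factor $(H_2-\lambda_{2,j}+kz)$ is the geometrically correct one; the $(H_1-\lambda_{2,j}+kz)$ in the printed statement is a typo carried over from the earlier sections.
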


Using Birkhoff factorization (\cite{KL}), an evaluation of the series $\mathds{S}(H_1^i H_2^j)$ can be obtained from $\mathds{I}$-function similar to \eqref{EofS}.

We define the series $\mathds{A}_{i},\mathds{B}_{i},\dots,\mathds{I}_i $ in $q_1, q_2$ by the following equations.

\begin{align*}
    \mathds{S}(\mathds{1})&=\mathds{1}+\frac{\mathds{A}_{1}H_1+\mathds{A}_{2} H_2}{z}+\mathsf{O}(\frac{1}{z^2})\,,\\
    \mathds{S}(H_1)&=H_1+\frac{\mathds{B}_1 H_1^2+\mathds{B}_2 H_1 H_2+\mathds{B}_3 \mathds{1}}{z}+\mathsf{O}(\frac{1}{z^2})\,,\\
     \mathds{S}(H_2)&=H_2+\frac{\mathds{C}_1 H_1^2+\mathds{C}_2 H_1 H_2+\mathds{C}_3 \mathds{1}}{z}+\mathsf{O}(\frac{1}{z^2})\,,\\
     \mathds{S}(H_1^2)&=H_1^2+\frac{\mathds{E}_1H_1^3+\mathds{E}_2 H_1^2 H_2+\mathds{E}_3 H_1+\mathds{E}_4 H_2}{z}+\mathsf{O}(\frac{1}{z^2})\,,\\
      \mathds{S}(H_1 H_2)&=H_1 H_2+\frac{\mathds{F}_1H_1^3+\mathds{F}_2 H_1^2 H_2+\mathds{F}_3 H_1+\mathds{F}_4 H_2}{z}+\mathsf{O}(\frac{1}{z^2})\,,\\
      \mathds{S}(H_1^3)&=H_1^3+\frac{\mathds{G}_1 \mathds{1}+\mathds{G}_2 H_1^3 H_2+\mathds{G}_3 H_1^2+\mathds{G}_4 H_1 H_2}{z}+\mathsf{O}(\frac{1}{z^2})\,,\\
     \mathds{S}(H_1^2 H_2)&=H_1^2 H_2+\frac{\mathds{H}_1 \mathds{1}+\mathds{H}_2 H_1^3 H_2+\mathds{H}_3 H_1^2+\mathds{H}_4 H_1 H_2}{z}+\mathsf{O}(\frac{1}{z^2})\,,\\
     \mathds{S}(H_1^3 H_2)&=H_1^3 H_2+\frac{\mathds{I}_1 H_1+\mathds{I}_2  H_2+\mathds{I}_3 H_1^3+\mathds{I}_4 H_1^2 H_2}{z}+\mathsf{O}(\frac{1}{z^2})\,.
\end{align*}

\subsubsection{Picard-Fuchs equations and asymptotic expansion.}
The function $\mathds{I}$ satisfies the Picard-Fuchs equations.

\begin{align*}
    &\Big(\Big(z\frac{d}{dt_1}\Big)^4-1-q_1\prod_{k=1}^4\Big(4\Big(z\frac{d}{dt_1}\Big)+2\Big(z\frac{d}{dt_2}\Big)+k z\Big)\Big)\mathds{I}=0\,,\\
    &\Big(\Big(z\frac{d}{dt_2}\Big)^2-1-q_2\prod_{k=1}^2\Big(4\Big(z\frac{d}{dt_1}\Big)+2\Big(z\frac{d}{dt_2}\Big)+k z\Big)\Big)\mathds{I}=0\,.
\end{align*}
Denote the small I-function by
$$\overline{\mathds{I}}:=\mathds{I}|_{t_1=0,t_2=0}\,.$$
The restriction $\overline{\mathds{I}}|_{H_1=\lambda_{1,i},H_2=\lambda_{2,j}}$ admits the asymptotic form,

\begin{align*}
    \overline{\mathds{I}}|_{H_1=\lambda_{1,i},H_2}=e^{\frac{\mathds{U}_{ij}}{z}}\Big(\mathds{R}_{0,ij}+\mathds{R}_{1,ij}z+\mathds{R}_{2,ij}z^2+\dots \Big)
\end{align*}
with series $\mathds{U}_{ij},\mathds{R}_{k,ij}\in \CC[[q_1,q_2]]$. Define series $\mathds{L}_{ij}$ and $\mathds{UD}_{ij}$ by
$$\mathds{L}_{ij}=\lambda_{1,i}+q_1\frac{d}{d q_1}\mathds{U}_{ij}\,,\,\,\mathds{UD}_{ij}=\lambda_{2,j}+q_2\frac{d}{dq_2}\mathds{U}_{ij}\,.$$

Let $L_{ij}$ be the series in $q_1$ defined by the constant term with respect to $q_2$. The argument of Lemma \ref{FG1} yields the following lemma.

\begin{Lemma}\label{L421}
We have
$$\mathds{L}_{ij},\mathds{UD}_{ij},\mathds{R}_{n,ij}\in \CC[[q_2]][L_{ij}^{\pm 1}]\,.$$
\end{Lemma}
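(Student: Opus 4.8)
The plan is to rerun, almost verbatim, the argument proving Lemma \ref{FG1}, so I fix a pair $(i,j)$ and suppress it from the notation for $\mathds{L}$, $\mathds{UD}$, $\mathds{R}_n$, $\mathds{U}$. First I would substitute the asymptotic form of $\overline{\mathds{I}}|_{H_1=\lambda_{1,i},H_2=\lambda_{2,j}}$ into the two Picard--Fuchs equations of this section and extract the leading (lowest-order) coefficient in $z$ from each, exactly as in the ``coefficient of $z$'' step of Lemma \ref{FG1}. Since $M_1$ and $M_2$ act to leading order as multiplication by $\mathds{L}=\lambda_{1,i}+\mathsf{D}_1\mathds{U}$ and $\mathds{UD}=\lambda_{2,j}+\mathsf{D}_2\mathds{U}$ respectively, this produces the two eikonal relations
\begin{align*}
\mathds{L}^4-1-q_1(4\mathds{L}+2\mathds{UD})^4&=0\,,\\
\mathds{UD}^2-1-q_2(4\mathds{L}+2\mathds{UD})^2&=0\,,
\end{align*}
which are the $\PP^3\times\PP^1$ analogues of the quadratic relation used in Lemma \ref{FG1}.

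Next I would set up the recursion in $q_2$. Writing $\mathds{U}=\sum_k u_k q_2^k$ as in \eqref{smse}, the $q_2^k$-coefficient of $\mathds{L}$ is $\mathsf{D}_1 u_k$ and that of $\mathds{UD}$ is $k u_k$. Collecting the $q_2^k$-part of the second eikonal relation gives an equation $\mathrm{Eq}_k$, and the crucial point---to be checked just as in Lemma \ref{FG1}---is that $\mathrm{Eq}_k$ is \emph{linear} in the top unknown $u_k$ with coefficient $2k\lambda_{2,j}$, while all remaining terms involve only $u_m$ and their $\mathsf{D}_1$-derivatives for $m<k$. Because $\lambda_{2,j}=\pm1$ this coefficient is invertible, so $u_k$ is determined from the lower data. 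The base case $u_0$ is governed by the first eikonal relation at $q_2=0$, which is the defining equation of $L_{ij}=\mathds{L}|_{q_2=0}$; differentiating it expresses $\mathsf{D}_1 L_{ij}$ as a rational function of $L_{ij}$ (matching the relation $\mathsf{D}L=\tfrac{L}{4}(L^4-1)$ of \cite{Lho}), so that $\CC[[q_2]][L_{ij}^{\pm1}]$ is stable under $\mathsf{D}_1$. An induction on $k$ then places every $q_2$-coefficient of $\mathds{L}$ and $\mathds{UD}$ in $\CC[[q_2]][L_{ij}^{\pm1}]$.

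For the amplitudes $\mathds{R}_n$ I would repeat the argument at higher order in $z$: the coefficient of $z^{n+1}$ in the Picard--Fuchs equations, after the eikonal relations are imposed, yields a transport equation that is linear in $\mathds{R}_n$ and expresses it through $\mathds{R}_m$ $(m<n)$, $\mathds{L}$, $\mathds{UD}$ and their $\mathsf{D}_1,\mathsf{D}_2$-derivatives. Expanding this once more in $q_2$ and invoking the same invertible-leading-coefficient structure, an induction on $n$ and on the $q_2$-order gives $\mathds{R}_n\in\CC[[q_2]][L_{ij}^{\pm1}]$, completing the three assertions of the lemma.

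The main obstacle is the denominator bookkeeping that confirms the ring is exactly $\CC[[q_2]][L_{ij}^{\pm1}]$. One must verify both that the leading coefficient $2k\lambda_{2,j}$ in $\mathrm{Eq}_k$ never degenerates and that the $\mathsf{D}_1$-derivatives generated by the recursion introduce no new denominators beyond powers of $L_{ij}$. This is the step that genuinely uses the cleaner structure of the twisted theory: unlike the local $K_{\PP^1\times\PP^1}$ case, where the ground ring $\mathsf{G}_{ij}$ had to carry the extra square-root generator $(1+\beta_j L_{ij})^{-1/2}$, here one must check that the relation for $\mathsf{D}_1 L_{ij}$ keeps $\CC[[q_2]][L_{ij}^{\pm1}]$ closed under $\mathsf{D}_1$ without any square-root or additional rational factors appearing.
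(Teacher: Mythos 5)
Your overall strategy coincides with the paper's: the paper proves Lemma \ref{L421} by simply invoking the argument of Lemma \ref{FG1}, and you reproduce that recursion (eikonal relations from the leading $z$-order of the Picard--Fuchs equations, induction on the $q_2$-order with invertible leading coefficient $2k\lambda_{2,j}$, then transport equations for the $\mathds{R}_{n,ij}$), and this structure is set up correctly. However, the step you yourself single out as the crux --- closure of $\CC[[q_2]][L_{ij}^{\pm1}]$ under $\mathsf{D}_1$ --- is where the argument breaks, and your resolution of it is wrong. The series $L_{ij}=\mathds{L}_{ij}|_{q_2=0}$ does \emph{not} satisfy $\mathsf{D}L=\tfrac{L}{4}(L^4-1)$: that relation belongs to $L=(1-4^4q_1)^{-1/4}$, i.e.\ to the quartic $L^4-1=q_1(4L)^4$, whereas your own first eikonal relation restricted to $q_2=0$ (where $\mathds{UD}_{ij}=\lambda_{2,j}$) gives the shifted quartic $L_{ij}^4-1=q_1(4L_{ij}+2\lambda_{2,j})^4$. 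Implicit differentiation of the latter yields
$$\mathsf{D}_1L_{ij}=\frac{(L_{ij}^4-1)(2L_{ij}+\lambda_{2,j})}{4\,(2+\lambda_{2,j}L_{ij}^3)}\,,$$
and the denominator $2+\lambda_{2,j}L_{ij}^3$ is not a monomial in $L_{ij}$, nor does it divide the numerator (its roots satisfy $L^3=-2\lambda_{2,j}$, while the numerator vanishes only at $L=-\lambda_{2,j}/2$ and at fourth roots of unity). So $\CC[L_{ij}^{\pm1}]$ is not stable under $\mathsf{D}_1$, and the failure enters the recursion immediately: $\mathrm{Eq}_1$ gives $u_1=2\lambda_{2,j}(2L_{ij}+\lambda_{2,j})^2$, so the $q_2^1$-coefficient of $\mathds{L}_{ij}$ is $\mathsf{D}_1u_1=2\lambda_{2,j}(2L_{ij}+\lambda_{2,j})^2(L_{ij}^4-1)/(2+\lambda_{2,j}L_{ij}^3)$, which is already not a Laurent polynomial in $L_{ij}$.

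To repair the argument you must either enlarge the ground ring to $\CC[L_{ij}^{\pm1},(2+\lambda_{2,j}L_{ij}^3)^{-1}]$ --- exactly parallel to the local $\PP^1\times\PP^1$ case, where $\mathsf{G}_{ij}$ is forced to contain the extra generator $(1+\beta_jL_{ij})^{-1/2}$ for precisely this reason --- or exhibit a cancellation that you have not addressed. The enlargement is harmless for Theorem \ref{MT4}, which is stated over the rational-function ring $\CC[[q_2]](L^4,A_1',X,E_1',B_2')$, and it is consistent with the non-Laurent denominators that actually occur in the relations of Proposition \ref{L422}; but as written your proof does not establish membership in the Laurent polynomial ring $\CC[[q_2]][L_{ij}^{\pm1}]$ claimed by the lemma.
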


\subsection{Relations.}\label{GE2}
Using the argument in Section \ref{RoG1}, we can find the relations among the series $\mathds{A}_i,\mathds{B}_i,\dots,\mathds{I}_i$. Since this yields complicated expressions, we instead find the relations among the series which are coefficient of $q_2^k$ in $\mathds{A}_i,\mathds{B}_i,\dots$.
Define the series in $q_1$ 
\begin{align*}
    \mathds{A}_i(q_1,q_2)=&A_i(q_1)+\sum_{k=1}^{\infty} A_{i,k}(q_1)\,q_2^k\,,\\
    \mathds{B}_i(q_1,q_2)=&B_i(q_1)+\sum_{k=1}^{\infty} B_{i,k}(q_1)\,q_2^k\,,\\
    &\dots\,\\
    \mathds{I}_i(q_1,q_2)=&I_i(q_1)+\sum_{k=1}^{\infty} I_{i,k}(q_1)\,q_2^k\,.
\end{align*}

We get the following results from the argument of Proposition \ref{MG}.

\begin{Prop}\label{L422}
 The series $A_k',B_k',\dots,I_k'$ and $A_{n,k},B_{n,k}\dots,I_{n,k}$ can be represented as rational functions in $A_1',B_2,E_1',L_{ij}$ for fixed $i\in\{1,2,3,4\}$ and $j\in \{1,2\}$,
 \begin{align*}A_n',B_n',\dots,I_n',A_{n,k},B_{n,k},\dots,I_{n,k}\in\CC(A_1',B_2', E_1',L_{ij})\,.
 \end{align*}
\end{Prop}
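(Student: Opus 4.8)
The plan is to run the argument of Proposition \ref{MG} in the present $8$-dimensional setting, with cohomology basis $\{1, H_1, H_2, H_1^2, H_1 H_2, H_1^3, H_1^2 H_2, H_1^3 H_2\}$ of $\PP^3 \times \PP^1$. By the Birkhoff factorization (\cite{KL}) each $\mathds{S}(H_1^i H_2^j)$ is built from $\overline{\mathds{I}}$, and its $1/z$-part is exactly the collection of series $\mathds{A}_i, \dots, \mathds{I}_i$ introduced in Section \ref{K3Gen}. I would first express quantum multiplication by $H_1 + \mathsf{D}_1$ and by $H_2 + \mathsf{D}_2$ as $8 \times 8$ matrices in this basis, with entries linear in $\mathds{A}_i, \dots, \mathds{I}_i$. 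As in the proof of Proposition \ref{MG}, the eigenvalues of these two matrices are the series $\mathds{L}_{ij}$ and $\mathds{UD}_{ij}$; by Lemma \ref{L421} these lie in $\CC[[q_2]][L_{ij}^{\pm 1}]$, which is the source of the generator $L_{ij}$.

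Next I would extract the defining relations from two sources, exactly as in Section \ref{RoG1}: first, the commutativity of quantum multiplication, which forces the two matrices to commute and pins their characteristic polynomials to the eigenvalue data $\mathds{L}_{ij}, \mathds{UD}_{ij}$; and second, the commutativity $[\mathsf{D}_1, \mathsf{D}_2] = 0$, which upon repeated application to the $\mathds{S}$-operators equates mixed derivatives and yields further identities among $\mathds{A}_i, \dots, \mathds{I}_i$ and their $\mathsf{D}_1, \mathsf{D}_2$-derivatives. Combined with the Picard--Fuchs equations, these produce a closed algebraic system whose coefficients are polynomials in $\mathds{L}_{ij}, \mathds{UD}_{ij}$.

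Then I would solve this system to isolate the claimed generators. Setting $q_2 = 0$ reduces the relations to identities among the $q_1$-series $A_i, \dots, I_i$, and these should express every derivative $A_n', \dots, I_n'$ rationally in the four quantities $A_1', B_2', E_1', L_{ij}$. For the higher coefficients I would substitute the expansions $\mathds{A}_i = A_i + \sum_{k} A_{i,k}\,q_2^k$, and likewise for the other series, together with $\mathds{L}_{ij}, \mathds{UD}_{ij} \in \CC[[q_2]][L_{ij}^{\pm 1}]$ from Lemma \ref{L421}, and read off the coefficient of $q_2^k$. As in the proof of Lemma \ref{FG2}, each resulting equation $\text{Eq}_k$ should be linear in the top unknowns $A_{n,k}, \dots, I_{n,k}$ with an explicit nonzero leading coefficient, built from factors of the form $(1 + \mathsf{D}_1 A_1)$ and powers of the eigenvalue differences $(L_{ij} - L_{i'j'})$, so that these unknowns are solved rationally in terms of lower-order data and the generators.

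The main obstacle will be the bookkeeping of the $8 \times 8$ system: verifying that the relations coming from commutativity of the two multiplication matrices and from mixed-derivative commutativity are rich enough to pin down all of $A_n', \dots, I_n'$ using only $A_1', B_2', E_1', L_{ij}$, so that no further independent generators are forced; and, at the recursive step, checking that the leading coefficient of each $\text{Eq}_k$ is genuinely invertible in the relevant localization of $\CC[[q_2]][L_{ij}^{\pm 1}]$ so the recursion closes. This is the same mechanism as in Lemma \ref{FG2}, but the larger cohomology makes the explicit identification of the nonvanishing leading coefficient the delicate point.
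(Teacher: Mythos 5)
Your proposal follows the paper's own route: the paper proves Proposition \ref{L422} simply by invoking ``the argument of Proposition \ref{MG}'', i.e.\ representing quantum multiplication by $H_1+\mathsf{D}_1$ and $H_2+\mathsf{D}_2$ as matrices in the cohomology basis with eigenvalues $\mathds{L}_{ij}$, $\mathds{UD}_{ij}$, extracting relations from commutativity of the quantum product and of $\mathsf{D}_1,\mathsf{D}_2$ acting on the $\mathds{S}$-operators, and then reading off the $q_2^k$-coefficients as in Lemma \ref{FG2} --- exactly the mechanism you describe. Your write-up is essentially a more detailed account of the same argument, and the ``delicate points'' you flag (closure of the relation system on the four generators and invertibility of the leading coefficient in the recursion) are precisely what the paper settles by the explicit computations recorded in the Appendix.
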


In the proof of Proposition \ref{L422}, we can show that
the series $A_1',\,E_1',\,L_{ij}$ satisfy the following relations.
\begin{multline*}
    2 E_1' + E_1'^2 + 2 A_1' (1 + E_1')^2 + A_1'^2 (1 + E_1')^2\\ - \frac{
 16 (-1 + L_{ij}^4)}{16 + 1 + (-1)^j 8  L_{ij} + 24  L_{ij}^2 + (-1)^j 32 L_{ij}^3}=0\,.
\end{multline*}

\subsection{Poof of Theorem \ref{MT4}}

The theorem follows from the argument in Section \ref{Mainpf} together with Lemma \ref{L421} and Proposition \ref{L422}.

\appendix

\section{Relations on the generators}

\subsection{Elliptic fibration.}\label{APE}
Recall the series $A_k',B_k'\,\dots\,J_k'$ defined in Section \ref{GE1}. They satisfy the following equations. 
{\tiny
\begin{align*}
    A_1'=&\frac{1}{(1 + B_1')^2 (2 + 3 L + 3 L^2)}(-1 + L^3 - 2 B_1' (2 + 3 L + 3 L^2) - 
 B_1'^2 (2 + 3 L + 3 L^2))\,,\\
 A_2'=&\frac{1}{2 (1 + B_1')^2 (2 + 3 L + 3 L^2)}-2 + 2 L^3 + B_1'^2 (-1 + 3 L + 3 L^2 + 3 L^3) + 
 B_1' (-2 + 6 L + 6 L^2 + 6 L^3)\,,\\
  B_2'=&\frac{1}{2 (1 + B_1')^2 (2 + 3 L + 3 L^2)}(-4 + 4 L^3 - 2 B_1'^3 (2 + 3 L + 3 L^2)+ 
 B_1'^2 (-13 - 15 L - 15 L^2 + 3 L^3) \\ &+ 
 2 B_1' (-7 - 6 L - 6 L^2 + 3 L^3))\,,\\
 B_3'=&\frac{1}{4 (1 + B_1')^2 (2 + 3 L + 3 L^2)}(-8 + 8 L^3 + 36 B_1' (-1 + L^3) + 
 3 B_1'^4 (-5 - 3 L - 3 L^2 + 3 L^3) \\&+ 
 4 B_1'^3 (-13 - 6 L - 6 L^2 + 9 L^3) + 
 6 B_1'^2 (-11 - 3 L - 3 L^2 + 9 L^3))\,,\\
 C_1'=&0\,,\\
 C_2'=&\frac{1}{(1 + B_1')^2 (2 + 3 L + 3 L^2)}(-1 + L^3 - B_1'^2 (2 + 3 L + 3 L^2) - B_1' (4 + 6 L + 6 L^2))\,,\\
 C_3'=&\frac{1}{2 (1 + B_1')^2 (2 + 3 L + 3 L^2)}(-2 + 2 L^3 + B_1'^2 (-1 + 3 L + 3 L^2 + 3 L^3) + 
 B_1' (-2 + 6 L + 6 L^2 + 6 L^3))\,,\\
  \end{align*}}
 
 {\tiny
 \begin{align*}
 E_1'=&\frac{1}{4 (1 + B_1')^2 (2 + 3 L + 3 L^2)}(-4 + 4 L^3 + 36 B_1'^3 (-1 + L^3) + 
 3 B_1'^4 (-5 - 3 L - 3 L^2 + 3 L^3) \\&+ 
 4 B_1' (-4 + 3 L + 3 L^2 + 6 L^3) + 
 8 B_1'^2 (-4 + 3 L + 3 L^2 + 6 L^3))\,,\\
 E_2'=&B_1'\,,\\
 E_3'=&\frac{1}{2 (1 + B_1')^2 (2 + 3 L + 3 L^2)}(-2 + 2 L^3 - 2 B_1'^3 (2 + 3 L + 3 L^2) + 6 B_1' (-1 + L^3) + 
 3 B_1'^2 (-3 - 3 L - 3 L^2 + L^3))\,,\\
 F_1'=&\frac{1}{4 (1 + B_1')^2 (2 + 3 L + 3 L^2)}(-8 + 8 L^3 + 36 B_1' (-1 + L^3) + 
 3 B_1'^4 (-5 - 3 L - 3 L^2 + 3 L^3) \\&+ 
 4 B_1'^3 (-13 - 6 L - 6 L^2 + 9 L^3) + 
 6 B_1'^2 (-11 - 3 L - 3 L^2 + 9 L^3))\,,\\
 F_2'=&B_1'\,,\\
 F_3'=&\frac{1}{2 (1 + B_1')^2 (2 + 3 L + 3 L^2)}(-4 + 4 L^3 - 2 B_1'^3 (2 + 3 L + 3 L^2) \\&+ 
 B_1'^2 (-13 - 15 L - 15 L^2 + 3 L^3) + 
 2 B_1' (-7 - 6 L - 6 L^2 + 3 L^3))\,,\\
 G_1'=&\frac{1}{2 (1 + B_1')^2 (2 + 3 L + 3 L^2)}(-2 + 2 L^3 + B_1'^2 (-1 + 3 L + 3 L^2 + 3 L^3) + 
 B_1' (-2 + 6 L + 6 L^2 + 6 L^3))\,,\\
 G_2'=&0\,,\\
 G_3'=&\frac{1}{(1 + B_1')^2 (2 + 3 L + 3 L^2)}(-1 + L^3 - B_1'^2 (2 + 3 L + 3 L^2) - B_1' (4 + 6 L + 6 L^2))\,,\\
 H_1'=&\frac{1}{2 (1 + B_1')^2 (2 + 3 L + 3 L^2)}(-2 + 2 L^3 - 2 B_1'^3 (2 + 3 L + 3 L^2) + 6 B_1' (-1 + L^3) + 
 3 B_1'^2 (-3 - 3 L - 3 L^2 + L^3))\,,\\
 H_2'=&\frac{1}{4 (1 + B_1')^2 (2 + 3 L + 3 L^2)}(-4 + 4 L^3 + 36 B_1'^3 (-1 + L^3) + 
 3 B_1'^4 (-5 - 3 L - 3 L^2 + 3 L^3) \\&+ 
 4 B_1' (-4 + 3 L + 3 L^2 + 6 L^3) + 
 8 B_1'^2 (-4 + 3 L + 3 L^2 + 6 L^3))\,,\\
 H_3'=&B_1'\,,\\
 I_1'=&\frac{1}{2 (1 + B_1')^2 (2 + 3 L + 3 L^2)}(-4 + 4 L^3 - 2 B_1'^3 (2 + 3 L + 3 L^2) + 
 B_1'^2 (-13 - 15 L - 15 L^2 + 3 L^3) \\&+ 
 2 B_1' (-7 - 6 L - 6 L^2 + 3 L^3))\,,\\
 I_2'=&\frac{1}{4 (1 + B_1')^2 (2 + 3 L + 3 L^2)}(-8 + 8 L^3 + 36 B_1' (-1 + L^3) + 
 3 B_1'^4 (-5 - 3 L - 3 L^2 + 3 L^3) \\&+ 
 4 B_1'^3 (-13 - 6 L - 6 L^2 + 9 L^3) + 
 6 B_1'^2 (-11 - 3 L - 3 L^2 + 9 L^3))\,,\\
 I_3'=&B_1'\,,\\
 J_1'=&B_1'\,,\\
 J_2'=&\frac{1}{2 (1 + B_1')^2 (2 + 3 L + 3 L^2)}(-2 + 2 L^3 - 2 B_1'^3 (2 + 3 L + 3 L^2) + 6 B_1' (-1 + L^3) + 
 3 B_1'^2 (-3 - 3 L - 3 L^2 + L^3))\,,\\
 J_3'=&\frac{1}{4 (1 + B_1')^2 (2 + 3 L + 3 L^2)}(-4 + 4 L^3 + 36 B_1'^3 (-1 + L^3) + 
 3 B_1'^4 (-5 - 3 L - 3 L^2 + 3 L^3) \\&+ 
 4 B_1' (-4 + 3 L + 3 L^2 + 6 L^3) + 
 8 B_1'^2 (-4 + 3 L + 3 L^2 + 6 L^3))\,.
\end{align*}}
Here $L=L_{0,0}$. Similar equations hold for other $(i,j)\ne (0,0)$.

\pagebreak
\subsection{K3 fibration.}
The series $A_k',B_k'\,\dots\,I_k'$ defined in Section \ref{GE1} satisfy the following equations.

{\tiny\begin{align*}
    A_2'&=\frac{1}{4} (7 A_1' + 4 A_1'^2 - 2 B_2' + 8 E_1' + 16 A_1' E_1' + 
   8 A_1'^2 E_1' + 4 E_1'^2 + 8 A_1' E_1'^2 + 4 A_1'^2 E_1'^2)\,,\\
   B_1'&=A_1'\,,\\
   B_3'&=\frac{1}{32 (1 + A_1')}(16 A_1'^4 (1 + E_1')^4 + 
  8 A_1'^3 (1 + E_1')^2 (9 + 16 E_1' + 8 E_1'^2) \\&+ 
  A_1'^2 (105 + 400 E_1' + 584 E_1'^2 + 384 E_1'^3 + 96 E_1'^4 + 
     16 B_2' (1 + E_1')^2) \\&+ 
  4 A_1' (13 + 60 E_1' + 94 E_1'^2 + 64 E_1'^3 + 16 E_1'^4 + 
     B_2' (5 + 16 E_1' + 8 E_1'^2)) \\&+ 
  4 (-3 B_2'^2 + B_2' (-2 + 8 E_1' + 4 E_1'^2) + 
     2 E_1' (6 + 11 E_1' + 8 E_1'^2 + 2 E_1'^3)))\,,\\
C_1'&=0\,,\\
C_2'&=A_1'\,,\\
C_3'&=\frac{1}{4}(7 A_1' + 4 A_1'^2 - 2 B_2' + 8 E_1' + 16 A_1' E_1' + 8 A_1'^2 E_1' + 
  4 E_1'^2 + 8 A_1' E_1'^2 + 4 A_1'^2 E_1'^2)\,,\\
  E_2'&=\frac{1}{4}(9 A_1' + 4 A_1'^2 - 2 B_2' + 6 E_1' + 16 A_1' E_1' + 8 A_1'^2 E_1' + 
  4 E_1'^2 + 8 A_1' E_1'^2 + 4 A_1'^2 E_1'^2)\,,\\
  E_3'&=\frac{1}{32 (1 + A_1')}(16 A_1'^4 (1 + E_1')^4 + 
  8 A_1'^3 (1 + E_1')^2 (7 + 16 E_1' + 8 E_1'^2) \\&+ 
  A_1'^2 (53 + 304 E_1' + 536 E_1'^2 + 384 E_1'^3 + 96 E_1'^4 + 
     16 B_2' (1 + E_1')^2) + \\&
  4 A_1' (4 + 38 E_1' + 82 E_1'^2 + 64 E_1'^3 + 16 E_1'^4 + 
     B_2' (11 + 16 E_1' + 8 E_1'^2)) \\&+ 
  4 (-3 B_2'^2 + 4 B_2' (1 + E_1')^2 + 
     2 E_1' (3 + 9 E_1' + 8 E_1'^2 + 2 E_1'^3)))\,,\\
E_4'&=\frac{1}{64 (1 + A_1')^2}(8 B_2'^3 + 384 A_1'^5 (1 + E_1')^6 + 64 A_1'^6 (1 + E_1')^6 - 
  32 B_2' E_1' (1 + E_1')^2 (2 + E_1') \\&- 
  4 B_2'^2 (1 + 8 E_1' + 4 E_1'^2) + 
  8 E_1' (5 + 43 E_1' + 104 E_1'^2 + 106 E_1'^3 + 48 E_1'^4 + 
     8 E_1'^5) \\&+ 
  4 A_1'^4 (1 + E_1')^2 (213 + 904 E_1' + 1412 E_1'^2 + 960 E_1'^3 + 
     240 E_1'^4 - 8 B_2' (1 + E_1')^2) \\&- 
  2 A_1'^3 (-425 - 2960 E_1' - 8264 E_1'^2 - 11904 E_1'^3 - 
     9376 E_1'^4 - 3840 E_1'^5 - 640 E_1'^6 + 64 B_2' (1 + E_1')^4) \\&+ 
  A_1'^2 (359 + 3216 E_1' + 10444 E_1'^2 + 16512 E_1'^3 + 
     13728 E_1'^4 + 5760 E_1'^5 + 960 E_1'^6 - 
     16 B_2'^2 (1 + E_1')^2 \\&- 
     2 B_2' (83 + 352 E_1' + 560 E_1'^2 + 384 E_1'^3 + 96 E_1'^4)) - 
  4 A_1' (8 B_2'^2 (1 + E_1')^2 + 
     B_2' (19 + 96 E_1' + 176 E_1'^2 + 128 E_1'^3 + 32 E_1'^4) \\&- 
     2 (5 + 92 E_1' + 399 E_1'^2 + 736 E_1'^3 + 664 E_1'^4 + 
        288 E_1'^5 + 48 E_1'^6)))\,,\\
F_1'&=0\,,\\
F_2'&=A_1'\,,\\
F_3'&=B_2;\,,\\
F_4'&=\frac{1}{32 (1 + A_1')}(16 A_1'^4 (1 + E_1')^4 + 
  8 A_1'^3 (1 + E_1')^2 (9 + 16 E_1' + 8 E_1'^2) \\&+ 
  A_1'^2 (105 + 400 E_1' + 584 E_1'^2 + 384 E_1'^3 + 96 E_1'^4 + 
     16 B_2' (1 + E_1')^2) \\&+ 
  4 A_1' (13 + 60 E_1' + 94 E_1'^2 + 64 E_1'^3 + 16 E_1'^4 + 
     B_2' (5 + 16 E_1' + 8 E_1'^2)) \\&+ 
  4 (-3 B_2'^2 + B_2' (-2 + 8 E_1' + 4 E_1'^2) + 
     2 E_1' (6 + 11 E_1' + 8 E_1'^2 + 2 E_1'^3)))\,,
   \end{align*}}
 
 {\tiny
 \begin{align*}
 G_1'&=\frac{1}{128 (1 + A_1')^2}  (64 A_1'^6 (1 + E_1')^6 + 
  16 A_1'^5 (1 + E_1')^4 (23 + 48 E_1' + 24 E_1'^2) \\&+ 
  A_1'^3 (733 + 5376 E_1' + 15616 E_1'^2 + 23168 E_1'^3 + 
     18592 E_1'^4 + 7680 E_1'^5 + 1280 E_1'^6 \\&- 
     16 B_2' (1 + E_1')^2 (9 + 16 E_1' + 8 E_1'^2)) - 
  4 A_1'^4 (8 B_2' (1 + E_1')^4 - 
     3 (1 + E_1')^2 (65 + 288 E_1' + 464 E_1'^2 + 320 E_1'^3 + 
        80 E_1'^4)) \\&- 
  2 A_1'^2 (-137 - 1444 E_1' - 4790 E_1'^2 - 7936 E_1'^3 - 
     6784 E_1'^4 - 2880 E_1'^5 - 480 E_1'^6 \\&+ 8 B_2'^2 (1 + E_1')^2 + 
     B_2' (105 + 400 E_1' + 584 E_1'^2 + 384 E_1'^3 + 96 E_1'^4)) \\&+ 
  8 (B_2'^3 + B_2'^2 (1 - 4 E_1' - 2 E_1'^2) - 
     2 B_2' E_1' (6 + 11 E_1' + 8 E_1'^2 + 2 E_1'^3) + 
     2 E_1' (9 + 17 E_1' + 48 E_1'^2 + 52 E_1'^3 + 24 E_1'^4 + 
        4 E_1'^5)) \\&- 
  4 A_1' (B_2'^2 (5 + 16 E_1' + 8 E_1'^2) + 
     2 B_2' (13 + 60 E_1' + 94 E_1'^2 + 64 E_1'^3 + 16 E_1'^4) \\&- 
     2 (2 + 102 E_1' + 349 E_1'^2 + 696 E_1'^3 + 654 E_1'^4 + 
        288 E_1'^5 + 48 E_1'^6)))\,,\\
G_2'&=\frac{E_1'}{2}\,,\\
G_3'&=\frac{1}{8} (-2 B_2' + 4 A_1'^2 (1 + E_1')^2 + 2 E_1' (3 + 2 E_1') + 
  A_1' (7 + 16 E_1' + 8 E_1'^2))\,,\\
G_4'&=\frac{1}{64 (1 + A_1')}(16 A_1'^4 (1 + E_1')^4 + 
  8 A_1'^3 (1 + E_1')^2 (7 + 16 E_1' + 8 E_1'^2)\\& + 
  A_1'^2 (61 + 304 E_1' + 536 E_1'^2 + 384 E_1'^3 + 96 E_1'^4 + 
     16 B_2' (1 + E_1')^2) \\&+ 
  4 A_1' (6 + 38 E_1' + 82 E_1'^2 + 64 E_1'^3 + 16 E_1'^4 + 
     B_2' (7 + 16 E_1' + 8 E_1'^2)) \\&+ 
  4 (-3 B_2'^2 + 4 B_2' E_1' (2 + E_1') + 
     2 E_1' (3 + 9 E_1' + 8 E_1'^2 + 2 E_1'^3)))\,,\\  
H_1'&=\frac{1}{64 (1 + A_1')^2}  (8 B_2'^3 + 384 A_1'^5 (1 + E_1')^6 + 64 A_1'^6 (1 + E_1')^6 - 
  32 B_2' E_1' (1 + E_1')^2 (2 + E_1') \\&- 
  4 B_2'^2 (1 + 8 E_1' + 4 E_1'^2) + 
  8 E_1' (5 + 43 E_1' + 104 E_1'^2 + 106 E_1'^3 + 48 E_1'^4 + 
     8 E_1'^5) \\&+ 
  4 A_1'^4 (1 + E_1')^2 (213 + 904 E_1' + 1412 E_1'^2 + 960 E_1'^3 + 
     240 E_1'^4 - 8 B_2' (1 + E_1')^2) \\&- 
  2 A_1'^3 (-425 - 2960 E_1' - 8264 E_1'^2 - 11904 E_1'^3 - 
     9376 E_1'^4 - 3840 E_1'^5 - 640 E_1'^6 + 64 B_2' (1 + E_1')^4) \\&+ 
  A_1'^2 (359 + 3216 E_1' + 10444 E_1'^2 + 16512 E_1'^3 + 
     13728 E_1'^4 + 5760 E_1'^5 + 960 E_1'^6 - 
     16 B_2'^2 (1 + E_1')^2 \\&- 
     2 B_2' (83 + 352 E_1' + 560 E_1'^2 + 384 E_1'^3 + 96 E_1'^4)) - 
  4 A_1' (8 B_2'^2 (1 + E_1')^2 \\&+ 
     B_2' (19 + 96 E_1' + 176 E_1'^2 + 128 E_1'^3 + 32 E_1'^4) - 
     2 (5 + 92 E_1' + 399 E_1'^2 + 736 E_1'^3 + 664 E_1'^4 + 
        288 E_1'^5 + 48 E_1'^6)))   \,,\\
H_2'&=E_1'\,,\\
H_3'&=\frac{1}{4}(9 A_1' + 4 A_1'^2 - 2 B_2' + 6 E_1' + 16 A_1' E_1' + 8 A_1'^2 E_1' + 
  4 E_1'^2 + 8 A_1' E_1'^2 + 4 A_1'^2 E_1'^2)\,,\\
H_4'&=\frac{1}{32 (1 + A_1')}(16 A_1'^4 (1 + E_1')^4 + 
  8 A_1'^3 (1 + E_1')^2 (7 + 16 E_1' + 8 E_1'^2) + 
  A_1'^2 (53 + 304 E_1' + 536 E_1'^2 + 384 E_1'^3 + 96 E_1'^4 \\&+ 
     16 B_2' (1 + E_1')^2) + 
  4 A_1' (4 + 38 E_1' + 82 E_1'^2 + 64 E_1'^3 + 16 E_1'^4 + 
     B_2' (11 + 16 E_1' + 8 E_1'^2)) \\&+ 
  4 (-3 B_2'^2 + 4 B_2' (1 + E_1')^2 + 
     2 E_1' (3 + 9 E_1' + 8 E_1'^2 + 2 E_1'^3)))\,,\\
I_1'&=\frac{1}{64 (1 + A_1')}(16 A_1'^4 (1 + E_1')^4 + 
  8 A_1'^3 (1 + E_1')^2 (7 + 16 E_1' + 8 E_1'^2) + 
  A_1'^2 (61 + 304 E_1' + 536 E_1'^2 + 384 E_1'^3 + 96 E_1'^4 \\&+ 
     16 B_2' (1 + E_1')^2) + 
  4 A_1' (6 + 38 E_1' + 82 E_1'^2 + 64 E_1'^3 + 16 E_1'^4 + 
     B_2' (7 + 16 E_1' + 8 E_1'^2)) \\&+ 
  4 (-3 B_2'^2 + 4 B_2' E_1' (2 + E_1') + 
     2 E_1' (3 + 9 E_1' + 8 E_1'^2 + 2 E_1'^3)))\,,\\
        \end{align*}}
 
 {\tiny
 \begin{align*}
I_2'&=\frac{1}{128 (1 + A_1')^2}(64 A_1'^6 (1 + E_1')^6 + 
  16 A_1'^5 (1 + E_1')^4 (23 + 48 E_1' + 24 E_1'^2)\\& + 
  A_1'^3 (733 + 5376 E_1' + 15616 E_1'^2 + 23168 E_1'^3 + 
     18592 E_1'^4 + 7680 E_1'^5 + 1280 E_1'^6 - 
     16 B_2' (1 + E_1')^2 (9 + 16 E_1' + 8 E_1'^2)) \\&- 
  4 A_1'^4 (8 B_2' (1 + E_1')^4 - 
     3 (1 + E_1')^2 (65 + 288 E_1' + 464 E_1'^2 + 320 E_1'^3 + 
        80 E_1'^4)) \\&- 
  2 A_1'^2 (-137 - 1444 E_1' - 4790 E_1'^2 - 7936 E_1'^3 - 
     6784 E_1'^4 - 2880 E_1'^5 - 480 E_1'^6 + 8 B_2'^2 (1 + E_1')^2 \\&+ 
     B_2' (105 + 400 E_1' + 584 E_1'^2 + 384 E_1'^3 + 96 E_1'^4)) + 
  8 (B_2'^3 + B_2'^2 (1 - 4 E_1' - 2 E_1'^2) - 
     2 B_2' E_1' (6 + 11 E_1' + 8 E_1'^2 + 2 E_1'^3)\\& + 
     2 E_1' (9 + 17 E_1' + 48 E_1'^2 + 52 E_1'^3 + 24 E_1'^4 + 
        4 E_1'^5)) - 
  4 A_1' (B_2'^2 (5 + 16 E_1' + 8 E_1'^2)\\& + 
     2 B_2' (13 + 60 E_1' + 94 E_1'^2 + 64 E_1'^3 + 16 E_1'^4) - 
     2 (2 + 102 E_1' + 349 E_1'^2 + 696 E_1'^3 + 654 E_1'^4 + 
        288 E_1'^5 + 48 E_1'^6)))\,,\\
I_3'&=\frac{E_1'}{2}\,,\\
I_4'&=\frac{1}{8}(-2 B_2' + 4 A_1'^2 (1 + E_1')^2 + 2 E_1' (3 + 2 E_1') + 
  A_1' (7 + 16 E_1' + 8 E_1'^2))\,.
\end{align*}}
Here $L=L_{0,0}$. Similar equations hold for other $(i,j)\ne (0,0)$.

\end{document}